\definecolor{dark-blue}{rgb}{0.15,0.15,0.4}
\definecolor{medium-blue}{rgb}{0,0,0.5}
\definecolor{dark-red}{rgb}{0.4,0.15,0.15}
\apptocmd{\sloppy}{\hbadness 10000\relax}{}{}
\renewcommand*{\backrefalt}[4]{
	\ifcase #1 %
	\else $\uparrow$ {\footnotesize #2.}%
	\fi
	}
\newcommand\footnoteref[1]{\protected@xdef\@thefnmark{\ref{#1}}\@footnotemark}
\newtheorem{tm}{Theorem}[section]
\newtheorem{proposition}[tm]{Proposition}
\newtheorem{lemma}[tm]{Lemma}
\newtheorem{corollary}[tm]{Corollary}
\newtheorem*{acknowledgments}{Acknowledgments}
\theoremstyle{definition}
\theoremstyle{remark}
\newtheorem*{remark}{Remark}
\DeclareMathOperator{\@Gal}{Gal}
\DeclareMathOperator{\Img}{Img}
\newcommand\legendre[2]{\left(\frac{#1}{#2}\right)}
\newcommand{\Z}{\mathbb{Z}}
\newcommand{\N}{\mathbb{N}}
\newcommand{\F}{\mathbb{F}}
\newcommand{\Qbar}{{\overline{\mathbb Q}}}
\newcommand*{\tors}{\text{tors}}
\DeclareMathOperator{\GL}{GL}
\newcommand{\K}{\mathbb{K}}
\newcommand{\KK}{\mathcal{K}}
\newcommand{\Kpet}{\mathcal{K}_{\geq 5}}
\newcommand{\Aut}{\operatorname{Aut}}
\def\diam#1{\langle#1\rangle}
\NewDocumentCommand{\Q}{oo}{%
	\IfValueTF{#2}%
		{\mathbb{Q}_{#1,#2}}%
		{\IfValueTF{#1}%
			{\mathbb{Q}_{\infty,#1}}%
			{\mathbb{Q}}%
		}%
}
\NewDocumentCommand{\Gal}{oo}{%
	\IfValueTF{#2}%
		{\@Gal\ensuremath{\left(#1/#2\right)}}%
		{\IfValueTF{#1}%
			{}%
			{\@Gal}%
		}%
}
\NewDocumentCommand{\Qz}{oo}{%
	\IfValueTF{#2}%
		{\mathbb{Q}(\ensuremath{\mu}_{{#1}^{#2}})}%
		{\IfValueTF{#1}%
			{\mathbb{Q}(\ensuremath{\mu}_{#1})}%
			{\mathbb{Q}(\ensuremath{\mu}_{p^\infty})}%
		}%
}
\title{Torsion groups of elliptic curves over some infinite abelian extensions of $\mathbb{Q}$}
\author{Tomislav Gu\v{z}vi\'{c}}
\address{Department od Mathematics, University of Zagreb, Bijeni\v{c}ka cesta 30, 10000 Zagreb, Croatia}
\email{tguzvic@math.hr}
\urladdr{https://web.math.pmf.unizg.hr/~tguzvic/}
\author{Ivan Krijan}
\address{Department of Mathematics, University of Zagreb, Bijeni\v{c}ka cesta 30, 10000 Zagreb, Croatia}
\email{ikrijan@math.hr}
\urladdr{http://web.math.pmf.unizg.hr/~ikrijan/}
\thanks{Authors were supported by the QuantiXLie Center of Excellence, a project co-financed by the Croatian Government and European Union through the European Regional Development Fund - the Competitiveness and Cohesion Operational Programme (Grant KK.01.1.1.01.0004).}
\date{\today}
\keywords{Elliptic curves, torsion}
\subjclass[2010]{11G05}
\begin{document}
\begin{abstract}
We determine, for an elliptic curve $E/\Q$, all the possible torsion groups \(E(\KK)_\tors\), where \(\KK\) is the compositum of all $\Z_p$-extensions of $\Q$. Furthermore, we prove that for an elliptic curve $E/\Q$ it holds that $E(\Qz)_\tors = E(\Qz[p])_\tors$, for all primes $p \geq 5$ and $E(\Qz[3][\infty])_\tors = E(\Qz[3][3])_\tors$, $E(\Qz[2][\infty])_\tors = E(\Qz[2][4])_\tors$.
\end{abstract}
%%%%%%%%%%%%%%%%%%%%%%%%%%%%%%%%%%%%%%%%%%%
\maketitle
%%%%%%%%%%%%%%%%%%%%%%%%%%%%%%%%%%%%%%%%%%%
\section{Introduction}
For a prime number \(p\), denote by \(\Q[p]\) the unique \(\Z_p\)-extension of \(\Q\), and for a positive integer \(n\), denote by \(\Q[n][p]\) the \(n\)\textsuperscript{th} layer of \(\Q[p]\), i.e.\ the unique subfield of \(\Q[p]\) such that $\Gal[\Q[n][p]][\Q]\simeq \Z/p^n\Z$.
	Recall that the \(\Z_p\)-extension of \(\Q\) is the unique Galois extension \(\Q[p]\) of $\Q$ such that
	\[\Gal[\Q[p]][\Q] \simeq \Z_p,\]
	where \(\Z_p\) is the additive group of the \(p\)-adic integers and is constructed as follows. Let
	\[G = \Gal[\Q(\zeta_{p^\infty})][\Q] = \varprojlim\limits_{n}\Gal[\Q(\zeta_{p^{n+1}})][\Q] \stackrel{\sim}{\rightarrow} \varprojlim\limits_{n}(\Z/p^{n+1}\Z)^\times = \Z_p^\times.\]
	Here we know that \(G = \Delta \times \Gamma\), where \(\Gamma \simeq \Z_p\) and \(\Delta \simeq \Z/(p-1)\Z\) for $p\geq 3$ and  \(\Delta \simeq \Z/2\Z\) (generated by complex conjugation) for $p=2$, so we define
	\[\Q[p] := \Q(\zeta_{p^\infty})^\Delta.\]
	We also see that every layer is uniquely determined by
	\[\Q[n][p] = \Q(\zeta_{p^{n+1}})^\Delta,\]
	so for $p\geq 3$ it is the unique subfield of \(\Q(\zeta_{p^{n+1}})\) of degree \(p^n\) over \(\Q\). More details and proofs of these facts about \(\Z_p\)-extensions and Iwasawa theory can be found in \cite[Chapter 13]{washington}.

Iwasawa theory for elliptic curves (see \cite{greenberg2}) studies elliptic curves in $\Z_p$-extensions, in particular the growth of the rank and $n$-Selmer groups in the layers of the $\Z_p$-extensions.

In the paper \cite{CDKN} we can find completely solved problem of determining how the torsion of an elliptic curve defined over $\Q$ grows in the $\Z_p$-extensions of $\Q$. In this paper we go a step further.

Let $\Kpet$ be the compositum of all $\Z_p$-extensions of $\Q$, for $p \geq 5$, i.e.
\[\Kpet = \prod\limits_{p \geq 5 \text{ prime}}\Q[p]\]
and let $\KK$ be the compositum of all $\Z_p$-extensions of $\Q$, i.e.
\[\KK = \prod\limits_{p \text{ prime}}\Q[p].\]

Our results are the following.

\begin{tm}\label{teo:nad_5_nista}
Let $E/\Q$ be an elliptic curve, then
\[E(\Kpet)_\tors = E(\Q)_\tors.\]
\end{tm}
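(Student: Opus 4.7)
The plan is to reduce to a finite-compositum statement and then induct on the number of $\Z_p$-extensions involved, the base case $E(\Q[p])_\tors = E(\Q)_\tors$ for $p \geq 5$ being the result of \cite{CDKN}. Any $P \in E(\Kpet)_\tors$ is defined over a finite subextension, so it lies in $E(F)$ for $F = \Q[p_1] \cdots \Q[p_k]$ with distinct primes $p_i \geq 5$. Writing $P$ as the sum of its $\ell$-primary components, the problem reduces to showing that each $\ell^a$-torsion point in $E(F)$ belongs to $E(\Q)$.

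For $\ell \in \{2, 3\}$ a direct group-theoretic argument suffices. The Galois group $\Gal[\Kpet][\Q] \cong \prod_{p \geq 5} \Z_p$ is a pro-$\{p : p \geq 5\}$ group, and its action on the finite abelian $\ell$-group $E(\Kpet)[\ell^\infty] \subseteq (\Q_\ell/\Z_\ell)^2$ factors through its automorphism group. Since this automorphism group embeds in some $\GL_2(\Z/\ell^a)$, whose order $\ell^{4a-3}(\ell-1)^2(\ell+1)$ has only $2$ and $3$ as prime factors when $\ell \in \{2,3\}$, the two prime-support conditions force the action to be trivial, giving $E(\Kpet)[\ell^\infty] = E(\Q)[\ell^\infty]$.

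For $\ell \geq 5$ one inducts on $k$. The key structural input is that every finite subquotient of $\Gal[F][\Q] = \prod_i \Z_{p_i}$ is a product of cyclic groups of pairwise coprime orders, so every subgroup is a product subgroup. Consequently $\Q(P) = L_1 \cdots L_k$ with $L_i := \Q(P) \cap \Q[p_i] \subseteq \Q[p_i]$, and $\Gal[\Q(P)][\Q] = \prod_i A_i$, each $A_i := \Gal[L_i][\Q]$ cyclic of $p_i$-power order. The fixed field of $A_i$ in $\Q(P)$ is $\prod_{j \neq i} L_j \subseteq \prod_{j \neq i} \Q[p_j]$, so the inductive hypothesis on $k-1$ extensions yields $E(\Q(P))[\ell^\infty]^{A_i} = E(\Q)[\ell^\infty]$ for each $i$.

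A Nakayama-type lifting argument handles most configurations: if $p_i \neq \ell$ and $p_i \nmid \ell^2 - 1$, then $p_i \nmid |\GL_2(\F_\ell)|$, and since $\Ker(\GL_2(\Z_\ell) \to \GL_2(\F_\ell))$ is pro-$\ell$, the cyclic $p_i$-group $A_i$ must act trivially on $E[\ell^\infty]$. Hence $P$ is fixed by $A_i$, i.e.\ $P \in E(\prod_{j \neq i} L_j)$, and the inductive hypothesis gives $P \in E(\Q)$. The main obstacle is the residual case where every $p_i$ lies in $\{\ell\} \cup \{q \geq 5 : q \mid \ell^2 - 1\}$, which is possible for instance when $\ell = 29$ (then $5, 7 \mid 840$); I would close this case by exploiting the Weil-pairing identity $\det \rho_\ell = \chi_\ell$ restricted to each $A_i$, the classification of abelian subgroups of $\GL_2(\F_\ell)$ as contained in Cartans or their normalizers, and a direct application of \cite{CDKN} to the $\ell$-Sylow subfield $L_j \subseteq \Q[\ell]$ whenever some $p_j = \ell$, which together force the $A_i$-actions on $P$ to be trivial.
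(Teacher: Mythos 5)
Your reduction is set up sensibly, and two of its pieces are sound: the pro-$\{p\geq 5\}$ argument killing $\ell\in\{2,3\}$ is correct (and more self-contained than the paper, which instead observes that every finite subfield of $\Kpet$ has degree with smallest prime divisor $\geq 5$ and quotes Theorem \ref{l-tors} to dispose of all primes except $5,7,11$ in one stroke), and the decomposition $\Gal[\Q(P)][\Q]\simeq\prod_i A_i$ together with the order-coprimality step for $p_i\nmid\ell(\ell^2-1)$ is fine. The problem is that what you call the ``residual case'' is not a corner case to be closed by the tools you list --- it is the entire arithmetic content of the theorem, and your sketch of it has two concrete gaps. First, nothing in your argument bounds the set of primes $\ell$ that survive to the residual case: for almost every $\ell$ there is some prime $q\geq 5$ dividing $\ell^2-1$, so a priori infinitely many $\ell$ remain. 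The missing step is the observation that $E(\Q(P))[\ell]$ is cyclic (no nontrivial roots of unity lie in $\Kpet$, by Proposition \ref{pro:weil}) and Galois-stable, hence gives a \emph{rational} cyclic isogeny, so that Theorem \ref{teo:mazur_izogenije} (Mazur--Kenku) confines $\ell^a$ to a finite list; you never invoke this, and ``Weil pairing plus abelian subgroups of $\GL_2(\F_\ell)$ lie in Cartans'' does not replace it.

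Second, even with that list in hand, the decisive subcases are arithmetic, not group-theoretic. Take $\ell=11$ with $p_i=5$: once the $11$-isogeny is rational, Corollary \ref{kor:izogenija_stupanj} forces $[\Q(P):\Q]\in\{1,5\}$, hence $\Q(P)=\Q[1][5]$, and one must show that for the three CM $j$-invariants admitting a rational $11$-isogeny the quintic field cut out by the isogeny character is \emph{not} the conductor-$25$ quintic field $\Q[1][5]$. That is exactly what \cite{CDKN} (and the paper, via division-polynomial computations as in its Lemma \ref{lem:lagani_prosti}) verify explicitly; no classification of abelian subgroups of $\GL_2(\F_{11})$ can decide which quintic field occurs. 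Your proposed ``direct application of \cite{CDKN} to the $\ell$-Sylow subfield when some $p_j=\ell$'' misses that the essential use of \cite{CDKN} is precisely when $\Q(P)$ lands in $\Q[q]$ for a prime $q\neq\ell$ (here $q=5$, $\ell=11$). Had you pushed the chain ``rational isogeny $\Rightarrow$ Mazur--Kenku list $\Rightarrow$ $[\Q(P):\Q]\mid\phi(\ell^a)$ $\Rightarrow$ the degree is a power of a single prime $q\in\{5,7,11\}$ $\Rightarrow$ $\Q(P)\subseteq\Q[q]$ $\Rightarrow$ Theorem \ref{bigpresult}'', the induction would close; as written, that chain is absent and the residual case is unproven.
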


\begin{tm}\label{teo:na_2_i_3_isto_kao_i_ranije}
Let $E/\Q$ be an elliptic curve, then $E(\KK)_\tors$ is exactly one of the following groups:
\begin{align*}
\Z / n\Z, &\quad 1 \leq n \leq 10 \text{ or } n \in \{12, 13, 21, 27\}, \\
\Z / 2\Z \oplus \Z/2n\Z, &\quad 1 \leq n \leq 4.
\end{align*}
For each group $G$ from the list above there exists an $E/\Q$ such that \(E(\KK)_\tors \simeq G\).
\end{tm}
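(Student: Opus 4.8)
Since $\KK=\Kpet\cdot\Q[2]\cdot\Q[3]$ is a compositum of abelian extensions, $\KK/\Q$ is abelian with $\Gal[\KK][\Q]\cong\prod_{p}\Z_p=\widehat{\Z}$, which is procyclic. Hence (i) for every $P\in E(\KK)_\tors$ the group $\Gal[\Q(P)][\Q]$ is cyclic, and (ii) $\KK$ has a unique subfield of each degree $d=\prod_p p^{a_p}$ over $\Q$, namely $\prod_p\Q[a_p][p]$, which is ramified exactly at the primes dividing $d$. A short ramification computation then shows $\KK\cap\Q(\zeta_{q^\infty})=\Q[q]$ for every prime $q$ (for $q=2$ using that $\Q[2]$ is totally real whereas $\Q(\zeta_4)$ is not), so $\KK$ contains the cyclotomic field $\Q(\zeta_m)$ only for $m\in\{1,2\}$. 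Therefore, if $E[m]\subseteq E(\KK)$ with $m>1$, the Weil pairing gives $\Q(\zeta_m)\subseteq\Q(E[m])\subseteq\KK$, forcing $m=2$; so $E(\KK)_\tors$ is cyclic or isomorphic to $\Z/2\Z\oplus\Z/2n\Z$. It remains to bound the cyclic orders and $n$, and to realize each group.

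\textbf{From Galois stability to rational isogenies.} For each prime $\ell$, $E(\KK)[\ell^\infty]$ is $G_\Q$-stable because $\KK/\Q$ is Galois. Being cyclic (for $\ell$ odd) or of the form $\Z/2\Z\oplus(\text{cyclic})$ (for $\ell=2$), it is finite: otherwise $E$ would admit rational cyclic $\ell^k$-isogenies — or rational isogenies with kernel $\Z/2\Z\oplus\Z/2^k\Z$ — for all $k$, contradicting the classification of rational isogenies of elliptic curves over $\Q$ (Mazur, Kenku). Thus $E(\KK)_\tors$ is finite and $G_\Q$-stable, so $E\to E/E(\KK)_\tors$ is a rational isogeny whose degree lies on that finite list. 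Moreover, if $P$ generates the cyclic part of $E(\KK)_\tors$ (or, in the noncyclic case, if $Q$ has order $2n$, take $P=2Q$), then $\langle P\rangle$ is again $G_\Q$-stable, $E$ carries a rational cyclic $\langle P\rangle$-isogeny, and $\Q(P)=\Qbar^{\,\ker\chi}$ where $\chi\colon G_\Q\to(\Z/\mathrm{ord}(P)\Z)^\times$ is the corresponding isogeny character; by (i)--(ii), $\Q(P)$ must be the unique subfield of $\KK$ of degree $|\Img\chi|$, a compositum of layers, in particular unramified outside the primes dividing $|\Img\chi|$. Combining these two restrictions — the degree on the finite Mazur--Kenku list, and the isogeny character cutting out so rigid a subfield — with the known $E(\Q[2])_\tors$ and $E(\Q[3])_\tors$ from \cite{CDKN} (governing the $2$- and $3$-primary parts) and with Theorem~\ref{teo:nad_5_nista} (which rules out anything supported away from $2$ and $3$, as it would already appear over $\Kpet$), one obtains exactly the groups in the statement; the only possibilities beyond Mazur's theorem are $\Z/13\Z,\Z/21\Z,\Z/27\Z$, and in the noncyclic case one gets $n\le 4$.

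\textbf{Realizability.} For each $G$ already occurring as $E(\Q)_\tors$, take such an $E/\Q$; the analysis above shows its torsion does not grow over $\KK$ (one checks directly that no relevant isogeny character of $E$ cuts out a subfield of $\KK$). For $G\in\{\Z/13\Z,\Z/21\Z,\Z/27\Z\}$, take the curves from \cite{CDKN} that already realize $G$ over $\Q[2]$ or over $\Q[3]$ — explicitly, an elliptic curve over $\Q$ with a rational $13$-, $21$-, resp.\ $27$-isogeny whose isogeny character cuts out the appropriate layer of $\Q[2]$ or $\Q[3]$ — and verify, via the same isogeny-character constraints and Theorem~\ref{teo:nad_5_nista}, that the torsion does not grow further over $\KK$. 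This exhausts the list.

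\textbf{Main difficulty.} The heart of the argument is the matching step for $\ell\in\{2,3\}$: excluding candidates such as $\Z/16\Z,\Z/18\Z,\Z/25\Z$, $\Z/2\Z\oplus\Z/16\Z$, and — more subtly — checking that composing an isogeny of degree prime to $6$ with a $2$- or a $3$-isogeny never yields torsion that is invisible over each of $\Q[2]$ and $\Q[3]$ separately but visible over their compositum inside $\KK$. This needs the precise conductor and order of the mod-$\ell^k$ isogeny character of an elliptic curve over $\Q$, the classifications of \cite{CDKN}, and the rigidity of the subfield lattice of $\KK$ recorded in (ii).
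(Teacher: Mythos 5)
Your high-level strategy — Weil pairing forces $E(\KK)_\tors$ to be cyclic or $\Z/2\Z\oplus(\mathrm{cyclic})$, the cyclic part gives a rational isogeny whose Mazur--Kenku degree bounds $n$, and the isogeny character must cut out a layer-compositum inside the procyclic field $\KK$ — is sound and does much of the same coarse work as the paper's invocation of Chou's theorem (which the paper uses instead to get its initial finite list). But the hard part of the theorem is precisely the elimination of the remaining candidates, and there your proposal has real gaps and at least one outright error.

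\textbf{Error in the realizability of $\Z/13\Z$.} You write that one should ``take the curves from \cite{CDKN} that already realize $G$ over $\Q[2]$ or over $\Q[3]$'' for $G\in\{\Z/13\Z,\Z/21\Z,\Z/27\Z\}$. That works for $\Z/21\Z$ and $\Z/27\Z$ (they appear in Theorem~\ref{teo:p_jednak_3}), but $\Z/13\Z$ appears in \emph{neither} Theorem~\ref{p2result} nor Theorem~\ref{teo:p_jednak_3}: no elliptic curve over $\Q$ has a $13$-torsion point over $\Q[2]$ or over $\Q[3]$. Since $\phi(13)=12=2^2\cdot 3$, the isogeny character can have image of order $6$ or $12$, placing $\Q(P)$ in $\Q[1][2]\Q[1][3]$ or $\Q[2][2]\Q[1][3]$ — a genuine mixed compositum that does not lie in any single $\Z_p$-tower. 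The paper must therefore exhibit explicit curves ($20736c1$, $20736d1$) and verify the torsion directly. Your realizability argument as written cannot produce $\Z/13\Z$, and worse, if followed literally it would wrongly omit it from the list.

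\textbf{The elimination step is not actually carried out, and it does not reduce to the ingredients you cite.} You appeal to Theorem~\ref{teo:nad_5_nista} to ``rule out anything supported away from $2$ and $3$.'' But Theorem~\ref{teo:nad_5_nista} only controls torsion over $\Kpet$; a point of order $11$ with isogeny character of order dividing $\phi(11)=10$ lives (a priori) over $\Q[1][2]\Q[1][5]$, which is contained in $\KK$ but in neither $\Kpet$, $\Q[2]$, nor $\Q[3]$, so none of your three cited ingredients applies. The paper handles $p\in\{11,19,37,43,67,163\}$ by noting such curves have CM, listing the finitely many $j$-invariants, and computationally checking that the relevant irreducible factors of the division polynomials have no roots in the unique candidate subfield of $\KK$ (Lemma~\ref{lem:lagani_prosti}; the $p=163$ case even needs an extra factorization argument over $\Q(\zeta_9)^+$). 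Similarly, for $n=14,18$ (where $\phi(n)=6$ permits $\Q(P)\subseteq\Q[1][2]\Q[1][3]$) the paper uses a quadratic-twist decomposition $E(\Q[1][2]\Q[1][3])[k]\simeq E(\Q[1][3])[k]\oplus E^{(2)}(\Q[1][3])[k]$ to push the contradiction down into $\Q[3]$; and for $n=25$ it needs a careful $\bmod\,25$ image computation whose conclusion contradicts Greenberg's index theorem. None of these steps is a consequence of your stated constraints alone, and your paragraph labeled ``Main difficulty'' acknowledges the difficulty but does not resolve it. As it stands the proposal establishes the easy containments and the reduction, but not the theorem.
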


\begin{remark}
By Mazur's theorem \cite{mazur2} we see that
\[\{E(\KK)_\tors : E/\Q \text{ elliptic curve}\} = \{E(\Q)_\tors : E/\Q \text{ elliptic curve}\} \cup \{\Z/13\Z, \Z/21\Z, \Z/27\Z\}.\]
	However, there are many elliptic curves $E/\Q$ for which torsion grows from $\Q$ to $\KK$. In \cite{CDKN} one can find many examples of torsion growth $\Q \to \Q[2]$ and $\Q \to \Q[3]$. Same examples apply here. At the and of the proof of the Theorem \ref{teo:na_2_i_3_isto_kao_i_ranije} (more precisely, in the Lemma \ref{lem:kompozitum_13}) we show that there are some elliptic curves $E/\Q$ such that $E(\KK)_\tors \simeq \Z/13\Z$.
\end{remark}

Furthermore, let $\mu_n$ be the set of all complex numbers $\omega$ such that $\omega^n = 1$. Note that for a prime number $p$ we have that $\Q(\mu_p) = \Q(\zeta_p)$, where $\zeta_p$ is, as usual, $p$\textsuperscript{th} primitive root of unity.

For a prime number $p$, we define a set $\mu_{p^\infty}$ as the set of all complex numbers $\omega$ for which there exists non-negative integer $k$ such that $\omega^{p^k} = 1$. Note that $\Qz$ is the set $\Q$ extended with all $p^{n^{\text{th}}}$ primitive roots of unity.

\begin{tm} \label{teo:rast_qzetap}
Let $E/\Q$ be an elliptic curve, then for a prime number $p \geq 5$ it holds that
\[E(\Qz)_\tors = E(\Qz[p])_\tors.\]
Furthermore,
\[E(\Qz[3][\infty])_\tors = E(\Qz[3^3])_\tors \qquad \text{and} \qquad E(\Qz[2][\infty])_\tors = E(\Qz[2^4])_\tors.\]
\end{tm}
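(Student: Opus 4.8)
The plan is to reduce the question about the infinite extension $\Qz = \Q(\mu_{p^\infty})$ to a question about a finite layer by controlling which torsion can appear, and then to run a case analysis on the possible primes dividing the order of a torsion point. First I would observe that $\Q(\mu_{p^\infty})$ is an abelian extension of $\Q$ with $\Gal[\Qz][\Q] \simeq \Z_p^\times$, which for $p \geq 5$ decomposes as $\Z/(p-1)\Z \times \Z_p$. Since every torsion point generates a finite extension, it suffices to show that $E(\Qz)_\tors$ is finite and is already defined over $\Q(\mu_{p^2})$; equivalently, for every prime $\ell$ and every $n$, if $E$ has an $\ell^n$-torsion point over $\Qz$ then it has one over $\Qz[p]$ (the first layer). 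For this I would use the standard fact that $\Q(E[\ell^n])$ is ramified at primes of bad reduction and at $\ell$, while $\Q(\mu_{p^\infty})/\Q(\mu_p)$ is pro-$p$ and unramified outside $p$; so if $\ell \neq p$, a torsion point of $\ell$-power order defined over $\Qz$ is actually defined over $\Q(\mu_p) = \Q(\zeta_p)$, and one invokes the classification of torsion of $E/\Q$ over cyclotomic fields $\Q(\zeta_p)$ — which is known (Najman and others) — to bound the $\ell$-part. When $\ell = p$, the extension $\Q(x(P))$ for $P$ of order $p^n$ has degree growing with $n$ and its Galois group over $\Q$ is a subgroup of the Borel in $\GL_2(\Z/p^n\Z)$; comparing the possible degrees with the structure $\Z/(p-1)\Z \times \Z_p$ forces $n$ to be bounded, and in fact $n \leq 1$ for $p \geq 5$ by a ramification/degree count. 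Assembling these bounds across all $\ell$ gives $E(\Qz)_\tors = E(\Q(\zeta_p))_\tors = E(\Qz[p])_\tors$.

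The key steps, in order, would be: (1) show $E(\Qz)_\tors$ is finite — this follows because $E(\Qz)_\tors$ injects into $E(\Q(\zeta_p))_\tors \oplus (\text{$p$-part})$ and each piece is bounded; (2) handle the prime-to-$p$ part via the unramified-outside-$p$ argument, reducing to $\Q(\zeta_p)$ and citing the known classification of $E(\Q(\zeta_p))_\tors$ for $E/\Q$; (3) handle the $p$-part by analyzing the image of Galois in the Borel subgroup of $\GL_2(\Z_p)$ and using that $\Gal[\Qz][\Q]$ is abelian, so the image is contained in a split torus or a very restricted Borel, which bounds the $p$-power torsion by $p$ (for $p \geq 5$); (4) combine (2) and (3). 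For $p = 3$ and $p = 2$ the same strategy applies but the prime-to-$p$ and $p$-parts can be larger because $\Gal[\Qz[3]][\Q] \simeq \Z/2\Z \times \Z_3$ and $\Gal[\Qz[2]][\Q] \simeq \Z/2\Z \times \Z_2$ allow a few more layers before stabilization; the explicit bounds $3^3$ and $2^4$ would come from checking exactly when a point of order $3^n$ (resp.\ $2^n$) can have its field of definition inside $\Q(\mu_{3^\infty})$ (resp.\ $\Q(\mu_{2^\infty})$), which is a finite computation using division polynomials and the constraint that the relevant Galois group is abelian of the prescribed form.

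The main obstacle I expect is step (3): pinning down exactly how large the $p$-power torsion can be inside $\Qz$. The naive ramification count shows $\Q(P)/\Q$ for $P$ of order $p^n$ is wildly ramified at $p$ with the ramification index growing roughly like $p^{n-1}$, and $\Qz[p]/\Q$ also has ramification index $p^n$ at the $n$th layer, so the bound does not fall out for free from ramification alone — one genuinely needs the abelian (indeed cyclotomic) nature of the extension. The cleanest route is probably to use that any point of order $p$ already forces a specific shape on the mod-$p$ representation (it must be reducible, with one of the Galois-stable lines giving a character of $\Gal$ that factors through $\Z_p^\times$), and then to show that a point of order $p^2$ would require the mod-$p^2$ representation to contain a character of conductor dividing $p^2$ of order $p(p-1)$ acting on a stable line, together with the extension class being split over $\Qz$; bounding this via the known surjectivity results for mod-$p$ representations of $E/\Q$ (Mazur's theorem on rational isogenies, which forbids $p$-isogenies for $p \geq 11$ and heavily restricts $p \in \{5,7\}$) is what ultimately closes the case. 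For $p = 2, 3$ the extra care needed is exactly to carry out the analogous analysis with the small-prime isogeny phenomena (the curves with rational $3$- and $9$-isogenies, $2$-, $4$-, $8$-isogenies) treated one family at a time, and to verify that none of them acquires torsion beyond the $3^3$ (resp.\ $2^4$) layer — here a direct check against the parametrized families from \cite{CDKN} should suffice.
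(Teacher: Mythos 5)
Your overall architecture (reduce the infinite extension to a finite layer, then analyze each prime $\ell$ dividing the order of a torsion point separately, leaning on Mazur's isogeny theorem to bound orders) matches the paper's strategy, but two of your key mechanisms have genuine gaps, and the concrete cases that make the theorem hard are not addressed.

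The first gap is in your step (2). You propose to show that for $\ell \neq p$, an $\ell$-power torsion point over $\Qz$ is already defined over $\Q(\mu_p)$ by noting that $\Qz/\Q(\mu_p)$ is unramified outside $p$ while $\Q(E[\ell^n])/\Q$ is ramified only at $\ell$ and primes of bad reduction. This argument silently assumes $E$ has good reduction at $p$. If $E$ has (potentially) multiplicative or additive reduction at $p$, the inertia at $p$ can act nontrivially on $E[\ell^n]$, so $\Q(P)/\Q$ may well be ramified at $p$ and the linear-disjointness you want does not fall out for free; you would need a separate case analysis by reduction type that you have not sketched. The paper sidesteps this entirely: it uses Lemma~\ref{lem-j-k_isog} together with Mazur's isogeny theorem to get a rational $\ell^n$-isogeny, deduces $[\Q(P):\Q]\mid\phi(\ell^n)$ via Corollary~\ref{kor:izogenija_stupanj}, and then compares that divisor against the structure $\Gal[\Qz][\Q]\simeq\Z/(p-1)\Z\times\Z_p$ to force $\Q(P)\subseteq\Q(\mu_p)$. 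That degree comparison is robust to bad reduction. A related issue: you appeal to a ``known classification of $E(\Q(\zeta_p))_\tors$''; this is not an off-the-shelf result for general $p$, and the theorem being proved is precisely what supplies such a statement.

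The second gap is in step (3) and in the treatment of the ``boxed'' isogeny degrees. You write that Mazur's theorem ``forbids $p$-isogenies for $p\ge 11$''; in fact rational $p$-isogenies exist for $p\in\{11,13,17,19,37,43,67,163\}$, and these are exactly the cases that do \emph{not} succumb to the easy $\phi(n)$ bookkeeping, because $\phi(n)$ then has an odd prime factor (such as $3,5,7,11$ or $3^4$) that could live in the $\Z_p$ part of the Galois group. The paper handles these by explicit division-polynomial factorizations over the relevant layers (for $n=11,17,43,67,163$), and for $n=25$ by a genuinely different argument bounding $[\GL_2(\Z/25\Z):G_{\Q}(25)]$ and invoking Greenberg's theorem — none of which is foreseen in your outline, where the $p$-power case is dispatched with a heuristic Borel/torus remark. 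Likewise, for $p=2$ your ``direct check against the parametrized families from \cite{CDKN}'' does not account for the actual work the paper does: the Gonz\'alez-Jim\'enez--Lozano-Robledo classification of abelian $\Q(E[n])$, the bound $[\Q(P):\Q(2P)]\mid 4$, rank and torsion computations on the modular curve $X_1(4,8)\simeq$ 32a2, and $2$-adic image calculations to rule out $\Z/2\Z\oplus\Z/16\Z$ and $\Z/4\Z\oplus\Z/16\Z$ beyond the fourth layer. The right general shape is there, but the mechanisms you name would not close the proof without being replaced by the degree/isogeny bookkeeping and the explicit computations.
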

\begin{remark}
This situation is ``the best possible''. For $E = \href{http://www.lmfdb.org/EllipticCurve/Q/27a4}{27a4}$ (Cremona reference from \cite{lmfdb}) we have that
\[E(\Qz[3^2])_\tors = \Z/9\Z \subsetneq \Z/27\Z = E(\Qz[3^3])_\tors\]
and for $E = \href{http://www.lmfdb.org/EllipticCurve/Q/32a4}{32a4}$ it holds that
\[E(\Qz[2^3])_\tors = \Z/2\Z \oplus \Z/4\Z \subsetneq \Z/2\Z \oplus \Z/8\Z = E(\Qz[2^4])_\tors.\]
\end{remark}

\section{Notation and auxiliary results}
In this paper we deal with elliptic curves defined over $\Q$, so unless noted otherwise, all elliptic curves will be assumed to be defined over $\Q$.

We will use standard notation regarding Galois representations attached to elliptic curves throughout the paper following \cite{silverman}. More details can be found in \cite[\S 2]{CDKN}.

To make this paper as self-contained as reasonably possible, we now list the most important known results that we will use.

Firstly, we state main results from \cite{CDKN}:
	\begin{tm}\label{bigpresult}
	Let \(p \geq 5\) be a prime number, and $E/\Q$ an elliptic curve. Then
	\[E(\Q[p])_\tors = E(\Q)_\tors.\]
	\end{tm}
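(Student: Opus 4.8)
The plan is to argue one rational prime at a time and, in each case, to reduce the question to finitely many elliptic curves carrying a rational isogeny, which are then settled by hand. Since $\Qz\subseteq\Q^{\mathrm{ab}}$, the group $E(\Qz)_\tors$ is finite, so it suffices to bound the field of definition of a single torsion point; recall that $\Gal(\Qz/\Q)\simeq\Z_p^\times\simeq\Z/(p-1)\Z\times\Z_p$ and that, for $p$ odd, $\Qz$ is the cyclotomic $\Z_p$-extension of $\Qz[p]=\Q(\zeta_p)$. Thus for $p\geq5$ I want every torsion point of $E(\Qz)$ to be defined already over $\Qz[p]$, and for $p=3$ (resp.\ $p=2$) over $\Qz[3^3]$ (resp.\ $\Qz[2^4]$). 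Write $E(\Qz)_\tors=\bigoplus_\ell E(\Qz)[\ell^\infty]$ and treat the prime-to-$p$ part and the $p$-primary part by different means.

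For the prime-to-$p$ part, let $P\in E(\Qz)$ have order $\ell^k$ with $\ell\neq p$. Because $\Q(P)\subseteq\Qz$ is abelian over $\Q$, every $\Gal$-conjugate of $P$ lies in $E(\Q(P))$, so the conjugates generate a $\Gal$-submodule of $E[\ell^\infty]$; if it is not cyclic it contains $E[\ell]$, forcing $\mu_\ell\subseteq\Qz$ by the Weil pairing, which is impossible for $\ell\geq3$, $\ell\neq p$, while for $\ell=2$ the point is defined over a subfield of $\Qz$ of degree at most $3$ over $\Q$, which is automatically contained in $\Q(\zeta_p)$ when $p\geq5$, in $\Qz[3^3]$ when $p=3$, and in $\Qz[2^4]$ when $p=2$. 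So we may assume $\langle P\rangle$ is $\Gal$-stable: $E$ admits a rational cyclic $\ell^k$-isogeny, hence $\ell\in\{2,3,5,7,11,13,17,19,37,43,67,163\}$ by Mazur's isogeny theorem, and the Galois action on $\langle P\rangle$ is an isogeny character $\chi$ valued in $(\Z/\ell^k\Z)^\times$. Since $P\in E(\Qz)$, $\chi$ factors through $\Gal(\Qz/\Q)\simeq\Z_p^\times$ and is therefore unramified outside $p$; combining this with $\chi\chi'=\chi_{\mathrm{cyc}}\bmod\ell^k$ (where $\chi'$ is the dual isogeny character) and the criterion of N\'eron--Ogg--Shafarevich shows that $E$ has good reduction outside $\{p,\ell\}$, so by Shafarevich's theorem only finitely many such $E$ exist. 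The order of $\chi$ divides $\varphi(\ell^k)$, so its $p$-part is trivial — whence $P\in E(\Q(\zeta_p))$ — unless $p\mid\varphi(\ell^k)$; for the listed $\ell$ and $p\geq5$ the only genuinely remaining case is $(\ell,p)\in\{(11,5),(43,7),(67,11)\}$ (and then $k=1$), for $p=3$ only $\ell=163$, and for $p=2$ only $\ell=17$, the other divisibilities being harmless because a degree count forces the field cut out by $\chi$ into the target layer. In each of these finitely many cases one invokes the explicit list of non-cuspidal rational points of $X_0(\ell)$ (a handful of $j$-invariants, some of CM type) together with the reduction constraint above: the CM members are immediate by ramification considerations, since there $\chi$ ramifies only at $\ell$ and so its fixed field is linearly disjoint from $\Qz$, and for the remaining curves (and their finitely many relevant twists) one checks directly that $\chi$ cuts out a subfield of $\Q(\zeta_p)$, resp.\ of $\Qz[3^3]$, resp.\ of $\Qz[2^4]$.

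For the $p$-primary part, let $P$ have order $p^k$, defined over $\Qz$. Suppose first $p\geq5$. If $k=1$ and $\langle P\rangle$ is not $\Gal$-stable, then $\Q(E[p])\subseteq\Qz$ is abelian over $\Q$, so the image of $\overline{\rho}_{E,p}$ in $\GL_2(\F_p)$ is abelian, is a quotient of $\Z_p^\times$, and has surjective determinant; since the $p$-part of $|\GL_2(\F_p)|$ equals $p$ and the determinant is not surjective on the centraliser of a unipotent element, this image has order $p-1$ with $\det$ an isomorphism onto $\F_p^\times$, hence $\Q(E[p])=\Q(\zeta_p)$. If $\langle P\rangle$ is $\Gal$-stable, the isogeny character takes values in $\F_p^\times$, a group of order prime to $p$, so it factors through $\Gal(\Q(\zeta_p)/\Q)$ and $P\in E(\Q(\zeta_p))$. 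For $k\geq2$, a rational cyclic $p^k$-isogeny with $p\geq5$ forces $p^k=25$ by the classification of non-cuspidal rational points on $X_0(p^k)$; again good reduction outside $p$ leaves finitely many curves (of conductor a power of $p$), and for each a local study at $p$ — via the Tate parametrisation at primes of multiplicative reduction and the connected--\'etale filtration at $p$ — shows the isogeny character has order prime to $p$, so $P\in E(\Q(\zeta_p))$; non-$\Gal$-stable configurations reduce to the case $k=1$ after passing to a suitable subgroup of $E[p^k]$. This completes the case $p\geq5$. For $p\in\{2,3\}$ the torsion genuinely grows past the first layer — the curves $27a4$ and $32a4$ already exhibit $\Z/9\Z\subsetneq\Z/27\Z$ over $\Qz[3^2]\subsetneq\Qz[3^3]$ and $\Z/2\Z\oplus\Z/4\Z\subsetneq\Z/2\Z\oplus\Z/8\Z$ over $\Qz[2^3]\subsetneq\Qz[2^4]$, so $\Qz[3^3]$ and $\Qz[2^4]$ are the sharp layers — and here I would combine the classification of $E(\Q[3])_\tors$, $E(\Q[2])_\tors$ and of their layers from \cite{CDKN} with the known possible mod-$p^k$ Galois images of elliptic curves over $\Q$, fed into the same dichotomy (stable versus non-stable line), to show that the $p$-primary torsion over $\Qz$ is already realised over the third, resp.\ the fourth, cyclotomic layer.

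The step I expect to be the main obstacle is precisely this $p$-primary part for $p\in\{2,3\}$: beyond proving that the torsion stabilises (which is soft once finiteness is known) one must locate the stabilisation at exactly the third, resp.\ fourth, layer of $\Qz$, and this requires a careful inventory of the possible $2$-adic and $3$-adic Galois images of elliptic curves over $\Q$ and of exactly how a cyclic $p^k$-isogeny character may sit inside $\Z_p^\times$. A secondary difficulty is making the finite lists above genuinely exhaustive — for $p\geq5$ one must enumerate every elliptic curve of conductor dividing a power of $p$, resp.\ with good reduction outside $\{p,\ell\}$, that carries the relevant rational isogeny, and verify in each case that its isogeny character has order prime to $p$ (equivalently, is tamely ramified at $p$).
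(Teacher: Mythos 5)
Your proposal addresses the wrong statement. Theorem \ref{bigpresult} is about the cyclotomic $\Z_p$-extension $\Q[p]$, i.e.\ the subfield of $\Qz$ fixed by $\Delta\simeq\Z/(p-1)\Z$, with $\Gal[\Q[p]][\Q]\simeq\Z_p$, and it asserts that no torsion is gained at all: $E(\Q[p])_\tors=E(\Q)_\tors$. You work throughout with $\Qz=\Q(\mu_{p^\infty})$ (your own description $\Gal(\Qz/\Q)\simeq\Z_p^\times$ confirms this) and you aim to prove $E(\Qz)_\tors=E(\Qz[p])_\tors$ together with statements at the layers $\Qz[3][3]$ and $\Qz[2][4]$; that is the paper's Theorem \ref{teo:rast_qzetap}, not the statement at hand. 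Treating $p=2,3$ at all is a symptom of the mix-up: the analogue of Theorem \ref{bigpresult} for those primes is false (Theorems \ref{p2result} and \ref{teo:p_jednak_3}), which is exactly why the statement is restricted to $p\geq 5$. Your statement would in fact imply the theorem, but only through a bridge you never build: a torsion point of $E(\Q[p])$ lies in $E(\Qz)_\tors=E(\Qz[p])_\tors$, hence is defined over $\Q[p]\cap\Q(\zeta_p)=\Q$, because every number field inside $\Q[p]$ has $p$-power degree while $[\Q(\zeta_p):\Q]=p-1$. Without that step the proposal simply does not prove the theorem. For comparison, the paper does not reprove Theorem \ref{bigpresult} at all: it is quoted from \cite{CDKN}, where the proof runs through the $p$-power-degree constraint on subfields of $\Q[p]$ combined with the degree lists of Theorem \ref{teo:rast_torzije} and separate arguments for a few small torsion primes; your cyclotomic-character plan is much closer in spirit to this paper's proof of Theorem \ref{teo:rast_qzetap}.

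Even judged as a sketch of Theorem \ref{teo:rast_qzetap}, there is a genuine gap at the step where you claim that an isogeny character unramified outside $p$, the relation $\chi\chi'=\chi_{\mathrm{cyc}}$ and N\'eron--Ogg--Shafarevich force good reduction outside $\{p,\ell\}$, so that Shafarevich leaves finitely many curves to check. At a prime $q\notin\{p,\ell\}$ of multiplicative reduction, inertia acts unipotently on $E[\ell^k]$, so the isogeny character --- indeed the whole field $\Q(P)$, when $\langle P\rangle$ reduces to the $\mu$-part of the Tate curve --- can be unramified at $q$; no good reduction follows. This is fatal precisely where you lean on it most: in the $p$-primary case $p^k=25$ the curves with a rational $25$-isogeny form a genus-zero family ($X_0(25)$ has infinitely many rational points), so there is no finite list ``of conductor a power of $p$'' to check. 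The paper instead rules out a $25$-torsion point of $E(\Qz[5][\infty])$ not defined over $\Qz[5]$ by a mod-$25$ image computation that contradicts Greenberg's theorem (Lemma \ref{lem:sluaj_p_jednak_5}), and it settles the exceptional isogeny degrees $11,43,67,163,17$ by explicitly factoring division polynomials over the relevant (unique) subfields, rather than by ramification heuristics; note that only one of the three $j$-invariants admitting an $11$-isogeny is CM, and quadratic twists are not finite in number, so ``the CM members are immediate'' plus ``finitely many relevant twists'' does not cover these cases. Finally, your $\ell=2$ bound ``degree at most $3$'' applies to $\Q(E[2])$, not to a point of order $2^k$ with $k\geq 2$; controlling those requires something like Proposition \ref{pro:naj_enri_growth}.
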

	\begin{tm}\label{p2result}
	Let $E/\Q$ be an elliptic curve. \(E(\Q[2])_\tors\) is exactly one of the following groups:
	\begin{align*}
	\Z/N\Z, &\qquad 1 \leq N \leq 10 \text{, or } N = 12,\\
	\Z/2\Z \oplus \Z/2N\Z, &\qquad 1 \leq N \leq 4,
	\end{align*}
	\end{tm}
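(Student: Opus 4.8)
Write $F:=\Q[2]$, the cyclotomic $\Z_2$-extension of $\Q$. Since this is one of the main results of \cite{CDKN}, I only sketch the strategy I would follow. The plan is to prove the upper bound — that $E(F)_\tors$ is one of the listed groups — and then note that the realization half is essentially free. The features of $F$ that drive everything are: $\Gal(F/\Q)\cong\Z_2$; $F$ is the maximal totally real subfield of $\Qz[2][\infty]$, so that $\Qz[2][\infty]=F(i)$ and $F$ is totally real; the only prime ramifying in $F/\Q$ is $2$; $\Q(\sqrt2)=\Q[1][2]$ is the unique quadratic subfield of $F$; and $F$ contains no root of unity of order larger than $2$. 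First I would fix the abstract shape of the group: writing $E(F)_\tors\simeq\Z/a\Z\oplus\Z/b\Z$ with $a\mid b$, if $a\geq2$ then $E[a]\subseteq E(F)$, so $\mu_a\subseteq F$ by the Weil pairing, forcing $a\leq2$ since $F$ is totally real (in particular $E(F)$ never contains the full $4$-torsion, as $\mu_4\not\subseteq F$). Hence $E(F)_\tors$ is cyclic or of the form $\Z/2\Z\oplus\Z/2k\Z$, and it remains to bound the $\ell$-primary part $E(F)[\ell^\infty]$ for every prime $\ell$; finiteness of $E(F)_\tors$ will then drop out, or may be taken for granted a priori from Ribet's theorem since $F\subseteq\Q^{\mathrm{ab}}$.

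For an odd prime $\ell$: since $\mu_\ell\not\subseteq F$, the Weil pairing gives $E[\ell]\not\subseteq E(F)$, so $E(F)[\ell^\infty]$ is cyclic. If $P\in E(F)$ has order $\ell^j$, then $\Q(P)/\Q$ is abelian (a subextension of $F$), so every Galois conjugate of $P$ again lies in the cyclic group $E(F)[\ell^\infty]$ and hence generates $\langle P\rangle$; thus $\langle P\rangle$ is $G_\Q$-stable and $E$ admits a rational cyclic $\ell^j$-isogeny. Its isogeny character $\bar\chi\colon G_\Q\to(\Z/\ell^j\Z)^\times$ is trivial on $G_F$, hence factors through $\Gal(F/\Q)\cong\Z_2$; in particular $\bar\chi$ has $2$-power order and conductor a power of $2$. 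By the classification of rational cyclic isogenies of elliptic curves over $\Q$, $\ell^j$ lies in a short explicit list, from which I would eliminate all members except $\ell^j\in\{3,9,5,7\}$: for the prime values $\geq 11$ the curves carrying such an isogeny are CM, with an isogeny character of conductor divisible by an odd prime, which $\bar\chi$ cannot have — the only quadratic twist available inside $F$ is by the character of $\Q(\sqrt2)$, which alters the conductor only at $2$ — while for $\ell^j\in\{13,25,27\}$ the relevant modular curve $X_1(\ell^j)$ has genus $\geq2$ and one must show it has no new non-cuspidal $F$-point. The compatibility of the odd part with the $2$-part claimed in the statement — $7$ and $9$ occurring only alone, $5$ only with a $2$-part of order at most $2$, and $3$ only with a $2$-part contained in $\Z/4\Z$ or in $\Z/2\Z\oplus\Z/2\Z$ — comes out the same way from the composite levels $X_1(N)$, $N\in\{14,15,18,20,21,\dots\}$, which have genus $1$ (for $N=14,15$, with Jacobian of Mordell--Weil rank $0$ over $\Q$) or genus $\geq2$.

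For $\ell=2$ I would show the cyclic case gives at most $\Z/8\Z$ and the non-cyclic case at most $\Z/2\Z\oplus\Z/8\Z$; equivalently, $E(F)$ contains neither a point of order $16$ nor a subgroup isomorphic to $\Z/2\Z\oplus\Z/16\Z$. Such a point (resp.\ such a subgroup) over $F$ is a non-cuspidal $F$-rational point on $X_1(16)$ (resp.\ on $X_1(2,16)$), both of genus $\geq2$ and carrying only cusps over $\Q$, so it suffices to know that the $F$-rational points of these curves form a finite set with no non-cuspidal member; an efficient alternative is to invoke the known classification of $E(\Qz[2][\infty])_\tors$ and intersect with the subgroup fixed by complex conjugation $c$, using $E(F)_\tors=E(\Qz[2][\infty])_\tors^{\,c=1}$ and that $c$ acts on each $E[2^n]$ with determinant $-1$. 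I expect this to be the main obstacle: for the genus-$\geq2$ modular curves above, finiteness of rational points over the \emph{infinite} field $F$ is not supplied by Faltings' theorem, and one genuinely needs Iwasawa-theoretic control of Mordell--Weil ranks along the $\Z_2$-tower (or an Euler-system input) to force the relevant ranks to vanish.

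It remains to realize each group in the list, and here nothing new is required: the list coincides exactly with Mazur's list of torsion subgroups of elliptic curves over $\Q$, so each such $G$ equals $E(\Q)_\tors$ for a suitable $E/\Q$, whence $G\subseteq E(F)_\tors$. When $G$ is maximal in the list under inclusion (for instance $\Z/12\Z$ among the groups with a factor of $3$, or $\Z/2\Z\oplus\Z/8\Z$ overall), the upper bound forces $E(F)_\tors=G$ at once; for the remaining, smaller $G$ one chooses $E$ with the mod-$2^n$ and mod-$\ell^j$ Galois images as large as possible, so that no extra torsion is gained in passing from $\Q$ to $F$, and concludes $E(F)_\tors\simeq G$.
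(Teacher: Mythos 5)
First, a point of orientation: the paper you were asked to match does not prove Theorem \ref{p2result} at all --- it imports it verbatim as one of the main results of \cite{CDKN}, so there is no in-paper argument to compare against and your proposal has to stand on its own as a re-proof. As an outline it does follow the same general strategy as \cite{CDKN}: totally-realness of $\Q[2]$ plus the Weil pairing to force the shape $\Z/2\Z\oplus$ cyclic; cyclicity of $E(\Q[2])[\ell^\infty]$ plus Galois-stability to produce rational cyclic $n$-isogenies; the Mazur--Kenku isogeny classification; then case-by-case elimination. But the decisive steps --- ruling out points of order $11,13,16,17,25,27,37$ and the subgroup $\Z/2\Z\oplus\Z/16\Z$ over $\Q[2]$ --- are exactly the content of the theorem, and in your sketch they are only gestured at ("one must show it has no new non-cuspidal $F$-point").

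Two concrete problems. First, your diagnosis that finishing these cases "genuinely needs" Iwasawa-theoretic rank control along the $\Z_2$-tower (or Euler systems) is a misreading of the difficulty. Because every subfield of $\Q[2]$ is cyclic of $2$-power degree, the isogeny character has $2$-power order, and degree bounds such as $[\Q(P):\Q]\mid\phi(n)$ (Corollary \ref{kor:izogenija_stupanj}) together with growth bounds like $[\Q(P):\Q(2P)]\mid 4$ (Proposition \ref{pro:naj_enri_growth}) pin any putative point down to an explicit \emph{finite} layer $\Q[n][2]$ with small $n$ (e.g.\ $\Q[4][2]$ for order $17$, $\Q[3][2]$ or $\Q[4][2]$ for $2$-power orders). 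One then finishes with finite computations: factoring division polynomials over that layer, computing ranks and torsion of specific genus-one modular curves (or their twists) over specific number fields, and using the known $2$-adic image data; no Faltings over the infinite field and no Iwasawa theory is required --- this is precisely how \cite{CDKN} (and the analogous arguments reproduced in this paper, e.g.\ Lemmas \ref{lem:kompozitum_16} and \ref{lem:sluaj_p_jednak_2}) proceed. Second, your elimination of the primes $\ell\geq 11$ rests on the claim that all curves with a rational $\ell$-isogeny have CM with isogeny character ramified at an odd prime; that is false for $\ell=11$ ($j=-11\cdot 131^3$ and $j=-11^2$ are non-CM) and for $\ell=17$ and $37$ (all such $j$-invariants are non-CM), so those cases are not covered by your argument as stated. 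For $\ell=11$ the clean route is that $[\Q(P):\Q]$ would have to be a power of $2$ while Theorem \ref{teo:rast_torzije} only allows $5,10,20,40,55,80,100,110,120$; for $\ell=17$ (where $2$-power degrees such as $16$ do occur) one genuinely needs the division-polynomial check over $\Q[4][2]$. So the proposal is a reasonable road map in the spirit of \cite{CDKN}, but with the computational core missing, one factual error in the CM step, and a mistaken assessment of what input the hard cases require.
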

	\begin{tm}\label{teo:p_jednak_3}
	Let $E/\Q$ be an elliptic curve. \(E(\Q[3])_\tors\) is exactly one of the following groups:
	\begin{align*}
	\Z/N\Z, &\qquad 1 \leq N \leq 10 \text{, or } N = 12, 21 \text{ or } 27,\\
	\Z/2\Z \oplus \Z/2N\Z, &\qquad 1 \leq N \leq 4.
	\end{align*}
		and for each group $G$ from the list above there exists an $E/\Q$ such that \(E(\Q[3])_\tors  \simeq G.\)
	\end{tm}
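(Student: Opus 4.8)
The proof will rely on one structural fact about the base field — that $\Q[3]/\Q$ is abelian, indeed pro-cyclic with group $\Gal[{\Q[3]}][\Q]\cong\Z_3$ — together with Mazur's classification of $\Q$-rational torsion and of $\Q$-rational isogenies. The first step is a reduction. If $P\in E(\Q[3])$ is a torsion point, then $\Q(P)$ is a finite subextension of $\Q[3]/\Q$, hence cyclic of $3$-power degree and in particular Galois over $\Q$; so every $\Gal[\Qbar][\Q]$-conjugate of $P$ again lies in $\Q(P)$, and the $\Gal[\Qbar][\Q]$-submodule $M_P\subseteq E$ generated by $P$ is contained in $E(\Q[3])$. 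If $P$ has order $\ell^b$, then $M_P\cong\Z/\ell^j\Z\times\Z/\ell^b\Z$ with $0\le j\le b$, whence $E[\ell^j]\subseteq M_P\subseteq E(\Q[3])$, so $\Q(\zeta_{\ell^j})\subseteq\Q[3]$ by the Weil pairing. Since every layer of $\Q[3]$ has odd degree over $\Q$, while $[\Q(\zeta_{\ell^j}):\Q]$ is even whenever $\ell$ is odd and $j\ge1$, we get $j=0$ for all odd $\ell$ (and $j\le1$ for $\ell=2$). For odd $\ell$, then, $M_P=\langle P\rangle$ is $\Gal[\Qbar][\Q]$-stable: $E$ carries a rational cyclic $\ell^b$-isogeny, and its character $\lambda$ — being trivial on $\Gal[\Qbar][{\Q[3]}]$, which fixes the generator $P$ — factors through $\Gal[{\Q[3]}][\Q]\cong\Z_3$; hence $\lambda$ is automatically unramified outside $3$, has image a cyclic $3$-group, and $\Q(P)$ is precisely the layer $\Q[c][3]$ with $3^c=\#\operatorname{im}\lambda$. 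A short bookkeeping of the possible orders then shows that all of $E(\Q[3])_\tors$ already appears in the third layer $\Q[2][3]$, and everything except possibly the full $3$-part already appears in $\Q[1][3]$.

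Step two bounds each $\ell$-primary part for odd $\ell$. By Mazur's isogeny theorem the degree of a rational cyclic $\ell^b$-isogeny lies in $\{3,9,27,5,25,7,11,13,17,19,37,43,67,163\}$. For $\ell=3$ this gives $b\le3$ (no $81$-isogeny), so $E(\Q[3])[3^\infty]\le\Z/27\Z$. For $\ell=5$: the degree $25$ forces $\lambda$ trivial (its image is a $3$-group inside $(\Z/25\Z)^\times$, of order prime to $3$), hence a rational $25$-torsion point, contradicting Mazur over $\Q$; so $b\le1$ and $E(\Q[3])[5^\infty]\le\Z/5\Z$. For $\ell=7$: $b\le1$ (no $49$-isogeny), so $E(\Q[3])[7^\infty]\le\Z/7\Z$ (the cubic isogeny character genuinely occurs). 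For $\ell\ge11$ we have $b\le1$; for $\ell\in\{11,17\}$ one has $3\nmid\ell-1$, so $\lambda$ is trivial and we would get a rational $\ell$-torsion point, impossible by Mazur; for $\ell\in\{13,19,37,43,67,163\}$ one runs through the (for $\ell\neq13$, finitely many) $j$-invariants carrying a rational $\ell$-isogeny and checks, using the conductors of these essentially-CM curves together with the behaviour of $\lambda$ on inertia (the connected–étale sequence at $\ell$, the tame inertia action at the bad primes), that $\lambda$ is either of order prime to $3$ or ramified at a prime $\neq3$, so $\Q(P)\not\subseteq\Q[3]$; and the remaining primes admit no rational isogeny at all. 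Hence $E(\Q[3])[\ell^\infty]=0$ for every odd $\ell\notin\{3,5,7\}$.

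Step three treats $\ell=2$. Here $M_P$ (with $P$ of order $2^k$) is either cyclic — in which case $\lambda$, a $3$-group inside the $2$-group $(\Z/2^k\Z)^\times$, is trivial, so $P\in E(\Q)$ and $2^k\le8$ by Mazur — or it contains $E[2]$. In the second case $\langle2P\rangle\cong\Z/2^{k-1}\Z$ is $\Gal[\Qbar][\Q]$-stable and pointwise fixed by $\Gal[\Qbar][{\Q[3]}]$, so $2P\in E(\Q)$; if moreover $E[2]\subseteq E(\Q)$, inspecting the four square roots of $2P$ gives $P\in E(\Q)$ as well, whereas if $\Q(E[2])$ is a cubic field — necessarily the first layer $\Q[1][3]$ — then $E(\Q)[2]=0$, which forces $k\le1$ and $M_P=E[2]$. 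Combined with $E[4]\not\subseteq E(\Q[3])$ (since $\Q(i)\not\subseteq\Q[3]$), this yields that $E(\Q[3])[2^\infty]$ is one of $0,\ \Z/2\Z,\ \Z/4\Z,\ \Z/8\Z,\ (\Z/2\Z)^2,\ \Z/2\Z\oplus\Z/4\Z,\ \Z/2\Z\oplus\Z/8\Z$.

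It remains to cross-multiply the admissible $\ell$-parts and discard the combinations outside the list ($\Z/4\Z\oplus\Z/9\Z$, $\Z/8\Z\oplus\Z/3\Z$, $\Z/2\Z\oplus\Z/7\Z$, $\Z/2\Z\oplus\Z/27\Z$, $\Z/5\Z\oplus\Z/9\Z$, and so on). Since by step one everything but the top $3$-part is already visible over $\Q[1][3]$, I would invoke Najman's classification of torsion of $\Q$-curves over cubic fields — crucially using his determination of which cubic field occurs, so as to isolate what is realizable over the one cyclic cubic field unramified outside $3$ — to see that $E(\Q[1][3])_\tors$ lies in $\{\Z/N\Z:N\le12\text{ or }N=21\}\cup\{\Z/2\Z\oplus\Z/2N\Z:N\le4\}$; one then checks, again via the isogeny-character analysis, that the only possible enlargement from $\Q[1][3]$ to $\Q[2][3]$ is a jump of the $3$-part from $\Z/9\Z$ to $\Z/27\Z$, which, forcing a rational cyclic $27$-isogeny, cannot coexist with prime-to-$3$ torsion. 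This leaves exactly the stated list. For realizability: every group already arising as some $E(\Q)_\tors$ is realized by such a curve with no further growth over $\Q[3]$, and the remaining groups ($\Z/21\Z$, $\Z/27\Z$, and the ones obtained through growth of the $2$-part via $\Q(E[2])=\Q[1][3]$) are realized by explicit curves — for instance \href{http://www.lmfdb.org/EllipticCurve/Q/27a4}{27a4} for $\Z/27\Z$ — verified at a layer past the (effectively bounded) point of stabilization. I expect the real obstacle to be precisely this final bookkeeping: the cases $\ell\in\{13,19,37,43,67,163\}$ and the forbidden prime-combinations are not settled by module theory plus Mazur's theorems, but require the explicit isogeny data, a careful ramification analysis of isogeny characters, and Najman-type rational-point inputs on modular curves of level up to roughly $54$.
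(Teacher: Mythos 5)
First, a remark on the benchmark: this paper does not prove Theorem \ref{teo:p_jednak_3} at all; it is imported verbatim as one of the main results of \cite{CDKN}, so the only meaningful comparison is with the proof given there. Your skeleton does match the strategy of that proof in broad outline: since every subfield of $\Q[3]$ is cyclic of $3$-power degree, the Weil pairing forces the $\ell$-primary part to be cyclic for odd $\ell$ with Galois-stable generator, the associated isogeny character has $3$-power order and is unramified outside $3$, and Mazur--Kenku bounds the admissible prime powers; your handling of the $2$-primary part (trivial character on cyclic pieces, the square-root cocycle argument, the case $\Q(E[2])=\Q[1][3]$) is also sound.

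The genuine gap is that the decisive exclusions are exactly the ones you defer, and the tools you invoke do not cover them as stated. For order $13$: the quotient of $X_1(13)$ by the order-$3$ diamond subgroup has genus $0$ with rational points, so (after a quadratic twist) there are infinitely many elliptic curves over $\Q$ whose $13$-isogeny character has order $3$, i.e.\ which gain a point of order $13$ over some cyclic cubic field, and that field varies with the curve (compare the degree-$3$ entry for $p=13$ in Theorem \ref{teo:rast_torzije}). Hence Najman's cubic classification comes with a ``which cubic field'' determination only for the sporadic entries such as $\Z/21\Z$ and $\Z/2\Z\oplus\Z/14\Z$, not for $\Z/13\Z$ (nor $\Z/14\Z$, $\Z/18\Z$), and excluding points of order $13$, $14$ and $18$ over the specific field $\Q[1][3]=\Q(\zeta_9)^+$ requires a dedicated argument --- in practice determining the points of $X_1(13)$, $X_1(14)$, $X_1(18)$ (with rational $j$) over that field, which is precisely the computational content of \cite{CDKN} that your outline omits. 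Likewise, for $\ell\in\{19,37,43,67,163\}$ your assertion that the isogeny character is ``of order prime to $3$ or ramified at a prime $\neq 3$'' is plausible for the conductor-$\ell^2$ CM curves ($19,43,67,163$) once quadratic twists are controlled, but it is asserted rather than proved, and for $37$ (bad reduction at $5$ and $7$) it does not follow from ramification bookkeeping alone; the actual proofs verify these cases by explicit division-polynomial factorizations over the relevant layers (cf.\ Lemma \ref{lem:lagani_prosti} in this paper for the analogous step over $\KK$). Finally, the realizability half of the statement is only gestured at. You correctly identify this bookkeeping as the real obstacle; but since it is where the theorem is actually won or lost, the proposal as it stands is an outline rather than a proof.
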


Furthermore, we state known results that we will be using through this paper. Most of these results are stated in \cite{CDKN}, but we will state them here also (although without the proofs), for the sake of completeness.

\begin{proposition}\cite[Ch.\ III, Cor.\ 8.1.1]{silverman} \label{pro:weil} Let $E/L$ be an elliptic curve with $L\subseteq\Qbar$. For each integer $n\geq 1$, if $E[n]\subseteq E(L)$ then the $n$\textsuperscript{th} cyclotomic field $\Q(\zeta_n)$ is a subfield of $L$.
\end{proposition}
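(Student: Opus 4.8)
The plan is to read this off from the standard formal properties of the Weil pairing $e_n\colon E[n]\times E[n]\to\mu_n$, where $\mu_n\subseteq\Qbar$ is the group of $n$-th roots of unity. Recall that $e_n$ is bilinear, alternating (so $e_n(P,P)=1$ and $e_n(Q,P)=e_n(P,Q)^{-1}$), non-degenerate, and Galois-equivariant, i.e.\ $e_n(\sigma P,\sigma Q)=\sigma\bigl(e_n(P,Q)\bigr)$ for every $\sigma\in\Gal[\Qbar][L]$ and all $P,Q\in E[n]$; these are all established in \cite[Ch.\ III, \S 8]{silverman}.

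The first step is to produce a pair of $n$-torsion points pairing to a \emph{primitive} $n$-th root of unity. I would fix a $\Z/n\Z$-basis $\{P,Q\}$ of $E[n]\simeq(\Z/n\Z)^2$ and set $\zeta:=e_n(P,Q)$, of order $d\mid n$. For an arbitrary $R=aP+bQ\in E[n]$, bilinearity and the alternation identities give $e_n(R,dP)=e_n(P,P)^{ad}\,e_n(Q,P)^{bd}=\zeta^{-bd}=1$ because $\zeta^d=1$. Since this holds for every $R$, non-degeneracy forces $dP=O$, hence $n\mid d$ and therefore $d=n$; thus $\zeta$ is a primitive $n$-th root of unity.

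The second step uses the hypothesis: if $E[n]\subseteq E(L)$ then each $\sigma\in\Gal[\Qbar][L]$ fixes $P$ and $Q$, so Galois-equivariance yields $\sigma(\zeta)=\sigma\bigl(e_n(P,Q)\bigr)=e_n(\sigma P,\sigma Q)=e_n(P,Q)=\zeta$. Hence $\zeta$ is fixed by all of $\Gal[\Qbar][L]$, so $\zeta\in L$, and since $\zeta$ is a primitive $n$-th root of unity we conclude $\Q(\zeta_n)=\Q(\zeta)\subseteq L$. The only non-elementary ingredient is the existence and the four formal properties of the Weil pairing, which I take as known; granting those, the sole point that needs care is the first step, namely verifying that a basis of $E[n]$ pairs to an element of \emph{exact} order $n$ rather than to a root of unity of smaller order — everything else is a short direct computation.
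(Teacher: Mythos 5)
Your argument is correct: the primitivity computation (non-degeneracy forcing $e_n(P,Q)$ to have exact order $n$ on a basis) and the Galois-equivariance step are exactly the standard Weil-pairing proof of this fact. The paper itself gives no proof, citing Silverman's Corollary 8.1.1 directly, and your proposal is essentially a faithful reconstruction of that cited argument, so there is nothing to add.
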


An immediate consequence of this proposition is

\begin{corollary} \label{cor-weil}
Let \(p\) and \(q\) be odd prime numbers. Then
\[E(\Q[p])[q] \simeq \{O\} \quad \text{or} \quad \Z/q\Z.\]
\end{corollary}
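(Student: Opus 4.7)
The plan is to show that the only obstruction to the corollary is the presence of full $q$-torsion, which Proposition \ref{pro:weil} rules out. Since $E[q]\cong(\Z/q\Z)^2$ as an $\F_q$-vector space, any subgroup $E(\Q[p])[q]\subseteq E[q]$ is either trivial, cyclic of order $q$, or all of $E[q]$. So the whole task reduces to excluding the third possibility.

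Suppose, for contradiction, that $E[q]\subseteq E(\Q[p])$. Then Proposition \ref{pro:weil} forces $\Q(\zeta_q)\subseteq \Q[p]$. Now I would use the structure of the $\Z_p$-extension: since $\Gal[\Q[p]][\Q]\simeq\Z_p$, every finite subextension of $\Q[p]/\Q$ has degree a power of $p$. Applying this to $\Q(\zeta_q)/\Q$, we get that $[\Q(\zeta_q):\Q]=q-1$ is a power of $p$.

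The contradiction comes from a parity check: because $q$ is an odd prime, $q-1$ is even, so the only prime that could divide it is $2$. This forces $p=2$, contradicting the assumption that $p$ is odd. Hence the case $E(\Q[p])[q]\simeq E[q]\simeq(\Z/q\Z)^2$ is impossible, and only the two listed possibilities remain.

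There is no real obstacle here; the argument is just a combination of the Weil pairing consequence already stated (Proposition \ref{pro:weil}) with the pro-$p$ nature of $\Gal[\Q[p]][\Q]$. The only small point to be careful about is to invoke oddness of $q$ to conclude that $q-1$ is even, which is what rules out $p$ odd.
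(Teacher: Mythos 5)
Your proof is correct and takes essentially the same route as the paper: the corollary is stated there as an immediate consequence of Proposition \ref{pro:weil}, and your argument (full $q$-torsion would force $\Q(\zeta_q)\subseteq\Q[p]$, impossible because $[\Q(\zeta_q):\Q]=q-1$ would have to be a power of $p$, yet it is even while $p$ is odd) is precisely that intended immediate consequence, with the details of the pro-$p$ structure of $\Gal[\Q[p]][\Q]$ spelled out.
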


\begin{remark}
We have that \(E[q^n] \nsubseteq E(\Q[p])\), for each positive integer \(n\).
\end{remark}

\begin{lemma}\cite[Lemma 4.6]{dlns} \label{lem-j-k_isog}
Let $E$ be an elliptic curve over a number field $K$, let $F$ be a Galois extension of $\Q$, let $p$ be a prime, and let $k$ be the largest integer for which $E[p^k]\subseteq E(F)$.
If $E(F)_\tors$ contains a subgroup isomorphic to $\Z/p^k\Z\oplus\Z/p^j\Z$ with $j\ge k$, then~$E$ admits a $K$-rational $p^{j-k}$-isogeny.
\end{lemma}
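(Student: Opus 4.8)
The strategy is the usual one: I would construct a $G_K$-stable cyclic subgroup of $E$ of order $p^{j-k}$ and pass to the corresponding quotient isogeny, where $G_K$ denotes the absolute Galois group of $K$. The notation $E(F)$ presupposes $K\subseteq F$; since $F/\Q$ is Galois, $F/K$ is then Galois as well, so $E(F)$ is stable under $G_K$, and hence so is its $p$-primary part $V:=E(F)[p^\infty]$, regarded as a subgroup of $E[p^\infty]\cong(\Q_p/\Z_p)^2$. If $j=k$ there is nothing to prove, so assume $j>k\ge 0$.

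The heart of the matter is the claim that $W:=p^kV$ is a \emph{cyclic} $G_K$-stable subgroup of $E[p^\infty]$ whose exponent is at least $p^{j-k}$. That $W$ is $G_K$-stable is immediate. For cyclicity I would use the maximality of $k$: the inclusion $E[p^k]\subseteq V$ gives $V[p^k]=E[p^k]\cong(\Z/p^k\Z)^2$, while $E[p^{k+1}]\not\subseteq E(F)$ forces $V[p^{k+1}]$ to be a \emph{proper} subgroup of $E[p^{k+1}]\cong(\Z/p^{k+1}\Z)^2$ containing $(\Z/p^k\Z)^2$; a short finite-group argument shows such a subgroup is isomorphic to $(\Z/p^k\Z)^2$ or to $\Z/p^k\Z\oplus\Z/p^{k+1}\Z$. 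Since $W[p]=p^k\bigl(V[p^{k+1}]\bigr)$ and multiplication by $p^k$ annihilates $V[p^k]$, it follows that $\#W[p]\le p$, so $W$ has $p$-rank at most $1$ and is therefore cyclic (or isomorphic to $\Q_p/\Z_p$, which causes no difficulty). Equivalently, one can read this off from the classification of $\Z_p$-submodules of $(\Q_p/\Z_p)^2$: $V\cong\Z/p^k\Z\oplus V'$ with $V'$ cyclic or $\cong\Q_p/\Z_p$, whence $W=p^kV=p^kV'$. Finally, the hypothesis that $V$ contains a copy of $\Z/p^k\Z\oplus\Z/p^j\Z$ yields a point $P\in V$ of order $p^j$, and then $p^kP\in W$ has order $p^{j-k}$; so the exponent of $W$ is at least $p^{j-k}$.

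A cyclic group, and likewise $\Q_p/\Z_p$, has exactly one subgroup of each admissible order, so $W$ contains a unique subgroup $C$ of order $p^{j-k}$. Being the unique subgroup of its order, $C$ is stable under every automorphism of $W$, in particular under $G_K$; thus $C$ is $G_K$-stable. The quotient map $E\to E/C$ is then an isogeny defined over $K$ --- a finite $G_K$-stable subgroup always gives rise to a $K$-rational quotient, see \cite[Ch.\ III]{silverman} or \cite[\S 2]{CDKN} --- and, being separable in characteristic $0$, it has degree $\#C=p^{j-k}$. This is the required $K$-rational $p^{j-k}$-isogeny.

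The only genuinely routine ingredients are the structure theory of finite abelian $p$-groups and the standard construction of quotient isogenies over the base field. The step that warrants care --- and the sole place where the hypothesis on $k$ is used --- is establishing that $W=p^kV$ is cyclic, i.e.\ ruling out $V[p^{k+1}]=E[p^{k+1}]$ and then checking that $W[p]$ is cyclic; this is also why it is worth noting at the start that $F/K$ is Galois, so that $E(F)$ is genuinely a $G_K$-module.
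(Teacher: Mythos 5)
Your proof is correct. Note, however, that the paper does not prove this lemma at all: it is quoted verbatim from \cite[Lemma 4.6]{dlns} and stated without proof, so there is no internal argument to compare against; your reconstruction (Galois-stability of $E(F)[p^\infty]$, the maximality of $k$ forcing $W=p^kE(F)[p^\infty]$ to be cyclic or quasi-cyclic, and the characteristic subgroup of order $p^{j-k}$ giving a $K$-rational kernel) is essentially the standard argument used in the cited source. One small remark: you do not actually need $K\subseteq F$, only that $F$ is mapped to itself by $G_K$, which already follows from $F/\Q$ being Galois, so that step of your preamble is harmless but superfluous.
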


\begin{tm}\label{teo:mazur_izogenije}\cite{mazur2,kenku2,kenku3,kenku4,kenku5}
Let $E/\Q$ be an elliptic curve with a rational $n$-isogeny. Then
\[
n\leq 19 \text{ or } n \in\{21,25,27,37,43,67,163\}.
\]
\end{tm}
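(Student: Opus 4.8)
The plan is to control each prime-power part of the torsion along the tower $\Qz[p]=\Qz[p^{1}]\subseteq\Qz[p^{2}]\subseteq\cdots$, whose union is $\Qz$, using that $\Qz/\Q$ is abelian and unramified outside $p$, with $\operatorname{Gal}(\Qz/\Q)\simeq\Z_{p}^{\times}=\Delta\times\Gamma$ where $\Gamma\simeq\Z_{p}$ is the pro-$p$ part and $\Qz[p]=\Qz^{\Gamma}$. Write $T_{\ell}=E(\Qz)[\ell^{\infty}]$. First I would observe that $E(\Qz)_{\tors}$ is finite, hence lies in a single layer $\Qz[p^{N}]$: an infinite $T_{\ell}$ would have unbounded exponent, and Lemma~\ref{lem-j-k_isog}, applied over the Galois field $F=\Qz$ with $k$ the largest integer satisfying $E[\ell^{k}]\subseteq E(\Qz)$ (finite, since $E[\ell^{\infty}]\subseteq E(\Qz)$ would force $\Q(\mu_{\ell^{\infty}})\subseteq\Qz$ by Proposition~\ref{pro:weil} when $\ell\neq p$, or make $\Q(E[p^{\infty}])$ abelian over $\Q$ when $\ell=p$, both absurd), would then produce rational $\ell$-power isogenies of unbounded degree, contradicting Theorem~\ref{teo:mazur_izogenije}. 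Thus the theorem asserts exactly that one may take $N=1$ for $p\geq5$, $N\leq3$ for $p=3$, and $N\leq4$ for $p=2$, sharpness being witnessed by the curves \texttt{27a4} and \texttt{32a4} of the remark.

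Next I would handle the prime-to-$p$ part. Fix $\ell\neq p$; since $\Qz/\Q$ is Galois, $T_{\ell}[\ell]$ is a $\operatorname{Gal}(\Qbar/\Q)$-stable subgroup of $E[\ell]$. If it were all of $E[\ell]$, then $\zeta_{\ell}\in\Qz$ by Proposition~\ref{pro:weil}, impossible for odd $\ell$ by ramification; the case $\ell=2$ (only when $p\in\{2,3\}$) is settled directly, since $\Q(E[2])$, abelian over $\Q$ of degree dividing $6$, lies in $\Qz[3^{3}]$ when $p=3$ and in $\Qz[2^{4}]$ when $p=2$. So for odd $\ell\neq p$ the group $T_{\ell}$ is cyclic, hence equals the kernel $C$ of the rational isogeny $E\to E/T_{\ell}$ of prime-power degree $\ell^{a}$, bounded by Theorem~\ref{teo:mazur_izogenije}; $\operatorname{Gal}(\Qbar/\Q)$ acts on $C$ by a character $\psi$ that, since $C\subseteq E(\Qz)$, factors through $\Z_{p}^{\times}$, so $\psi$ is unramified outside $p$ and its image has $p$-part of order at most $p^{v_{p}(\ell-1)}$, placing $C$ inside $\Qz[p^{1+v_{p}(\ell-1)}]$. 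For $p\geq5$ with $p\nmid\ell-1$ this already gives $C\subseteq E(\Qz[p])$; the handful of remaining pairs carrying a rational $\ell$-isogeny with $p\mid\ell-1$ (essentially $(5,11),(7,43),(11,67)$) are disposed of by noting that for the finitely many curves admitting such an isogeny, and their quadratic twists, $\psi$ is genuinely ramified at $\ell$ (from the multiplicative-reduction or CM structure of those curves), so $\psi$ cannot be unramified outside $p$ unless trivial, and triviality yields a rational $\ell$-torsion point against Mazur's torsion theorem. For $p\in\{2,3\}$ the same bound gives $C\subseteq E(\Qz[3^{1+v_{3}(\ell-1)}])$, resp.\ $E(\Qz[2^{1+v_{2}(\ell-1)}])$, and one checks $1+v_{p}(\ell-1)$ never exceeds $3$, resp.\ $4$, for any surviving $\ell$ --- the only borderline cases, $\ell=163$ for $p=3$ and $\ell=17$ for $p=2$, again being killed by the ramification of $\psi$ --- so $C\subseteq E(\Qz[3^{3}])$, resp.\ $E(\Qz[2^{4}])$.

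Then I would treat the $p$-part. For $p$ odd I first claim $E(\Qz)[p]=E(\Qz[p])[p]$: the $\operatorname{Gal}(\Qbar/\Q)$-span of the orbit of a point of order $p$ is either a line $C$, on which $\operatorname{Gal}(\Qbar/\Q)$ acts through a character into $\F_{p}^{\times}$ that factors through $\Z_{p}^{\times}$ and hence kills $\Gamma$ (as $|\F_{p}^{\times}|=p-1$ is prime to $p$), so $C\subseteq E(\Qz[p])$; or all of $E[p]$, in which case $\operatorname{Gal}(\Q(E[p])/\Qz[p])$ is a subquotient of $\Gamma$, hence a $p$-group of order $1$ or $p$, and order $p$ would place the mod-$p$ image over $\Q$ (abelian, normalizing a unipotent) in the centralizer $\bigl\{\bigl(\begin{smallmatrix}a&b\\0&a\end{smallmatrix}\bigr)\bigr\}$ of that unipotent, making $\det\rho_{E,p}$ a square and contradicting surjectivity of the mod-$p$ cyclotomic character onto $\F_{p}^{\times}$ ($p$ odd, $p=3$ included). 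Hence $E(\Qz)[p]=E(\Qz[p])[p]$; for $p=2$ one argues instead, as above, that $E[2]\subseteq E(\Qz[2^{4}])$ whenever $E[2]\subseteq E(\Qz)$. For higher $p$-powers with $p$ odd: if $p$-power torsion grew above $\Qz[p]$, pick $P$ of minimal $p$-power order in $E(\Qz)\setminus E(\Qz[p])$; then $pP\in E(\Qz[p])$ and $\sigma(P)-P\in E[p]=E(\Qz[p])[p]$ for $\sigma\in\Gamma$, so $\Gamma$ moves $P$ through a quotient of order $p$ and $P\in E(\Qz[p^{2}])$. A short computation (using Weil pairing to exclude $E[p^{a}]\subseteq E(\Qz[p^{2}])$, then Lemma~\ref{lem-j-k_isog} over $F=\Qz[p^{2}]$, Theorem~\ref{teo:mazur_izogenije}, and the relation $E(\Qz[p^{2}])^{\Delta}=E(\Q[1][p])$ with Theorem~\ref{bigpresult}) forces either a rational isogeny of prime-power degree $\geq p^{2}$ (whence $p=5$ and degree $25$) or else $E[p]\subseteq E(\Qz[p])$ together with a rational $p$-isogeny and a point of order $p^{2}$ over $\Qz[p^{2}]$; both possibilities reduce to finitely many explicit families --- the one-parameter family of curves with a rational $25$-isogeny, and, in the second case, curves with a rational $p$-torsion point ($p\in\{5,7\}$) or rational points on split-Cartan modular curves ($p\in\{11,13,17,19\}$, forcing CM) --- each ruled out directly or by known results on those curves. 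This finishes $p\geq5$.

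For $p\in\{3,2\}$ the same framework, now fed by the classifications of $E(\Q[3])_{\tors}$ and $E(\Q[2])_{\tors}$ in Theorems~\ref{teo:p_jednak_3} and~\ref{p2result}, shows that $p$-power torsion cannot grow above $\Qz[3^{3}]$, resp.\ $\Qz[2^{4}]$: one uses Lemma~\ref{lem-j-k_isog} and Theorem~\ref{teo:mazur_izogenije} to limit how far growth can persist, tracks how the quadratic field $\Qz[p]$ meets the layers of $\Q[p]$, and rules out growth in the fourth layer and beyond, while \texttt{27a4} and \texttt{32a4} attain growth up to $\Qz[3^{3}]$ and $\Qz[2^{4}]$. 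The main obstacle is this endgame for $p\in\{2,3\}$ --- certifying that the stabilization layer is exactly $\Qz[3^{3}]$ and $\Qz[2^{4}]$ rather than something larger --- together with the handful of large-isogeny-prime and split-Cartan residual cases, whose resolution rests on a careful analysis of the ramification of the isogeny character and on the known structure of the relevant small-degree and modular curves; the finiteness statement and the reductions for $\ell\neq p$ and for the $p$-torsion itself are comparatively routine.
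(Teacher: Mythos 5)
Your proposal does not address the statement at hand. The statement is the Mazur--Kenku classification of rational $n$-isogenies: for an elliptic curve $E/\Q$ admitting a rational $n$-isogeny, $n\leq 19$ or $n\in\{21,25,27,37,43,67,163\}$. In the paper this is not proved at all --- it is quoted from the literature (\cite{mazur2,kenku2,kenku3,kenku4,kenku5}), being a deep theorem established by determining the rational points of the modular curves $X_0(n)$, via Mazur's study of the Eisenstein ideal and Kenku's subsequent case analysis. What you have written instead is a proof sketch of a completely different result, namely Theorem~\ref{teo:rast_qzetap} (stabilization of $E(\Qz)_\tors$ at the layers $\Qz[p]$, $\Qz[3^3]$, $\Qz[2^4]$): your argument is organized around controlling $\ell$-primary torsion along the cyclotomic tower, which has nothing to do with bounding the degrees of rational isogenies.

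Worse, your argument is circular with respect to the statement it is supposed to establish: at several key points (finiteness of $E(\Qz)_\tors$, bounding the cyclic $\ell$-power kernels $C$, the ``short computation'' in the $p$-part) you explicitly invoke Theorem~\ref{teo:mazur_izogenije} as a known input. None of the tools you deploy --- Proposition~\ref{pro:weil}, Lemma~\ref{lem-j-k_isog}, ramification of isogeny characters, the classifications of $E(\Q[2])_\tors$ and $E(\Q[3])_\tors$ --- can yield the isogeny bound itself; that bound requires the modular-curve machinery of Mazur and Kenku, which is entirely absent from (and cannot be reconstructed within) your outline. So the proposal has a fundamental gap: it proves the wrong statement and presupposes the right one.
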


\begin{corollary}\label{kor:ubij_torziju}
Let \(p\) be an odd prime number, \(E/\Q\) elliptic curve and \(P \in E(\Q[p])_\tors\) a point of order \(q^n\) for some prime \(q\) and positive integer \(n\), then
\[q^n \in \{2, 3, 4, 5, 7, 8, 9, 11, 13, 16, 17, 19, 25, 27,32, 37, 43, 67, 163\}.\]
\end{corollary}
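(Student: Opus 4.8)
The plan is to combine Corollary \ref{cor-weil} (and the remark following it) with the isogeny classification in Theorem \ref{teo:mazur_izogenije}, using Lemma \ref{lem-j-k_isog} as the bridge. Let $P \in E(\Q[p])_\tors$ have order $q^n$. First I would dispose of the case $q = 2$ or $q = 3$: since $\Q[p]$ for $p$ odd contains no quadratic or cubic subfield other than those forced by $p$ itself (indeed $\Q[p]$ is a $\Z_p$-extension, so its only nontrivial finite subextensions have $p$-power degree), the point $P$ of $q$-power order with $q \in \{2,3\}$, $q \neq p$, generates a subgroup $\Z/q^n\Z \subseteq E(\Q[p])$; then $E$ has a rational $q^n$-isogeny (take $k=0$ in Lemma \ref{lem-j-k_isog}, or note the cyclic subgroup generated by $P$ is Galois-stable because $E(\Q[p])[q^n]$ is cyclic), so $q^n \le 19$ or $q^n \in \{21,25,27,\dots\}$ by Theorem \ref{teo:mazur_izogenije}, forcing $q^n \in \{2,4,8,16,3,9,27\}$ together with the $p=q$ possibilities handled below.

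The main case is $q = p$ (odd). Here I would let $k$ be the largest integer with $E[p^k] \subseteq E(\Q[p])$. By Proposition \ref{pro:weil}, $E[p^k] \subseteq E(\Q[p])$ would force $\Q(\zeta_{p^k}) \subseteq \Q[p]$; but $\Q[p]$ is ramified only at $p$ and its layers have degree a power of $p$ over $\Q$, while $[\Q(\zeta_{p^k}):\Q] = p^{k-1}(p-1)$, so for $k \ge 1$ this containment fails (the factor $p-1 > 1$ is not a $p$-power). Hence $k = 0$, i.e. $E(\Q[p])[p]$ has order at most $p$. So the cyclic group generated by $P$ has order $p^n$ with $E[p] \not\subseteq E(\Q[p])$; again $\langle P\rangle$ is a cyclic $\Q$-rational subgroup of order $p^n$ (its Galois stability follows because $E(\Q[p])[p^m]$ is cyclic for every $m$, being contained in the cyclic group $E(\Q[p])[p^n]$... more carefully, one uses Lemma \ref{lem-j-k_isog} with $j = n$, $k = 0$ to get a rational $p^n$-isogeny). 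Theorem \ref{teo:mazur_izogenije} then gives $p^n \le 19$ or $p^n \in \{21,25,27,37,43,67,163\}$; since $p^n$ is a prime power this leaves $p^n \in \{5,7,11,13,17,19,25,27,37,43,67,163\}$.

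Assembling both cases yields $q^n$ in the stated list $\{2,3,4,5,7,8,9,11,13,16,17,19,25,27,32,37,43,67,163\}$. The one point requiring care — the step I expect to be the main obstacle — is verifying that the cyclic subgroup $\langle P\rangle$ is $\Gal(\Qbar/\Q)$-stable (equivalently, that the prime-power torsion of $E(\Q[p])$ is cyclic so that Lemma \ref{lem-j-k_isog} applies cleanly); this is exactly where Corollary \ref{cor-weil} and its remark are used for $q \neq p$, and where the Weil-pairing argument giving $k = 0$ is used for $q = p$. Everything else is bookkeeping against Theorem \ref{teo:mazur_izogenije}.
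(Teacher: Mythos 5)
There is a genuine gap in the $q=2$ case, and it is exactly the point that puts $32$ in the corollary's list. Your argument takes $k=0$ in Lemma~\ref{lem-j-k_isog} for $q=2$ (equivalently, assumes $E(\Q[p])[2^n]$ is cyclic), but neither is automatic: the Weil pairing constraint of Proposition~\ref{pro:weil} is vacuous for $E[2]$ since $\Q(\zeta_2)=\Q$, and Corollary~\ref{cor-weil}, which you lean on for cyclicity, is stated only for \emph{odd} $q$. When $p=3$ the field $\Q(E[2])$ can be a cyclic cubic contained in $\Q[3]$ (this is the $G_3\simeq \Z/3\Z$ case in the proof of Lemma~\ref{lem:kompozitum_10}), so $E[2]\subseteq E(\Q[3])$ and the correct value of $k$ in Lemma~\ref{lem-j-k_isog} is $1$, not $0$ (and it cannot exceed $1$, since $\Q(i)\not\subseteq\Q[p]$ because $\Q[p]$ is totally real). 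With $k=1$ and $j=n$ the lemma yields only a rational $2^{n-1}$-isogeny, and Theorem~\ref{teo:mazur_izogenije} then bounds $2^{n-1}\le 16$, i.e.\ $2^n\le 32$. Your write-up instead concludes $2^n\in\{2,4,8,16\}$, so it silently ``proves'' something false for the $q=2$, $p=3$ branch and in particular fails to derive the stated list.

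Your treatment of $q=p$ via the Weil pairing (totally real $\Q[p]$ forces $k=0$) and of odd $q\ne p$ via Corollary~\ref{cor-weil} is fine; the bookkeeping for $q=3$ should also explicitly allow $3$ and $9$, not just $27$, but that is minor. The fix is to split the $q=2$ discussion into $k=0$ and $k=1$, run Lemma~\ref{lem-j-k_isog} with the actual $k$, and accept the weaker conclusion ``rational $2^{n-k}$-isogeny,'' which is what forces the inclusion of $32$.
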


\begin{tm}\label{teo:rast_torzije}\cite[Theorem 5.7]{gn}
Let $E/\Q$ be an elliptic curve, p a prime and $P$ a point of order $p$ on $E$. Then all of the cases in the table below occur for $p\leq 13$ or $p=37$, and they are the only ones possible.
\[\begin{array}{|c|c|}
\hline
p & [\Q(P):\Q]\\
\hline
2 & 1,2,3\\
\hline
3 & 1,2,3,4,6,8\\
\hline
5 & 1,2,4,5,8,10,16,20,24\\
\hline
7 & 1,2,3,6,7,9,12,14,18,21,24,36,42,48\\
\hline
11 & 5,10,20,{ 40},55,{ 80},100,110,120\\
\hline
13 & 3,4,6,12,{ 24},39,{ 48},52,72,78,96,{ 144},156,168\\

\hline
37 & 12,36,{ 72},444,{ 1296},1332, 1368\\
\hline
\end{array}\]
For all other $p$, for $[\Q(P):\Q]$ the following cases do occur:
\begin{enumerate}
\item \(p^2 - 1\), \hfill for all \(p\),
\item \(8,\ 16,\ 32,\ 136,\ 256,\ 272,\ 288\), \hfill for \(p = 17\),
\item \(\displaystyle \frac{p - 1}{2},\ p-1,\ \frac{p(p-1)}{2},\ p(p-1)\), \hfill if \(p \in \{19,43,67,163\}\),

\item \(2(p-1),\ (p-1)^2\), \hfill if \(p \equiv 1 \pmod{3}\) or \(\legendre{-D}{p} = 1\),\\
							\mbox{} \hfill for some \(D \in \{1,2,7,11,19,43,67,163\}\),
\item \(\displaystyle \frac{(p-1)^2}{3},\ \frac{2(p-1)^2}{3}\), \hfill if \(p \equiv 4, 7 \pmod{9}\),
\item \(\displaystyle \frac{p^2-1}{3},\ \frac{2(p^2 - 1)}{3}\), \hfill if \(p \equiv 2, 5 \pmod{9}\),
\end{enumerate}

Apart from the cases above that have been proven to appear, the only other options that might be possible are:
\[\frac{p^2 - 1}{3},\ \frac{2(p^2 - 1)}{3}, \quad \text{ for } p \equiv 8 \pmod{9}.\]
\end{tm}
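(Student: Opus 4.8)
Write $n_p=1,3,4$ according as $p\ge 5$, $p=3$, $p=2$, so that the assertion is $E(\Qz[p][\infty])_\tors=E(\Qz[p^{n_p}])_\tors$. The plan is to show that \emph{every} torsion point $P\in E(\Qz[p][\infty])$ already lies in $E(\Qz[p^{n_p}])$; since $E(\Qz[p][\infty])_\tors=\bigoplus_\ell E(\Qz[p][\infty])[\ell^\infty]$ and a point of $\ell$-power order generates a finite subextension, it suffices to treat one prime $\ell$ at a time. The basic observation is that $\Qz[p][\infty]/\Q$ is abelian with $\Gal[\Qz[p][\infty]][\Q]\simeq\Z/(p-1)\Z\times\Z_p$ (and $\simeq\Z/2\Z\times\Z_2$ when $p=2$); hence for such a $P$ the field $\Q(P)$ is abelian over $\Q$ and $[\Q(P):\Q]=d\cdot p^{m}$ with $d\mid p-1$, so that $P$ lies in a layer of $\Qz[p][\infty]$ determined by $m=v_p([\Q(P):\Q])$, and the whole problem reduces to bounding $m$.

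For the part $\ell\ne p$: by Proposition~\ref{pro:weil} and Corollary~\ref{cor-weil} we have $\Q(\zeta_\ell)\not\subseteq\Qz[p][\infty]$ for odd $\ell\ne p$, hence $E[\ell]\not\subseteq E(\Qz[p][\infty])$ and $E(\Qz[p][\infty])[\ell^\infty]$ is cyclic (for $\ell=2$ with $p$ odd the same input bounds its rank and its $4$-part). Let $P$ be a point of maximal order $\ell^k$ in this group — the order is bounded by Theorem~\ref{teo:mazur_izogenije} since a $G_\Q$-stable cyclic subgroup yields a rational isogeny of that degree; then $\langle P\rangle$ is $G_\Q$-stable, so Lemma~\ref{lem-j-k_isog} gives $E$ a rational $\ell^k$-isogeny, and Theorem~\ref{teo:mazur_izogenije}, Corollary~\ref{kor:ubij_torziju} force $\ell^k\le 163$. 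Moreover $\Gal[\Q(P)][\Q]$ embeds into $\Aut(\langle P\rangle)=(\Z/\ell^k\Z)^\times$ of order $\ell^{k-1}(\ell-1)$, whence $m\le v_p(\ell-1)$. For $p\ge5$ this gives $m=0$ except for $(\ell,p)\in\{(11,5),(43,7),(67,11)\}$, and for $p\in\{2,3\}$ it gives $m\le n_p-1$ except for finitely many $\ell$ with $v_p(\ell-1)$ large, the extreme cases being $(\ell,p)\in\{(17,2),(163,3)\}$. Each exceptional pair is finished individually: the curves carrying the relevant $\ell$-isogeny form a finite set of $j$-invariants, and their isogeny characters $\chi\colon G_\Q\to\F_\ell^\times$ are ramified at $\ell$ (or at some other prime of bad reduction), so $\Q(P)=\Qbar^{\ker\chi}$ cannot sit inside $\Qz[p][\infty]$ — ramified only at $p$ — unless $\mathrm{ord}(\chi)$ is so small that $[\Q(P):\Q]$ drops below the minimum allowed in the $\ell$-row of Theorem~\ref{teo:rast_torzije}, a contradiction.

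For the part $\ell=p$: if $E[p]\subseteq E(\Qz[p][\infty])$ then $\Q(E[p])\subseteq\Qz[p][\infty]$ is abelian over $\Q$; being a quotient of $\Z/(p-1)\Z\times\Z_p$ with coprime factors it is cyclic, so $\overline{\rho}_p$ has cyclic image with surjective determinant. For $p\ge5$ this forces $|\overline{\rho}_p(G_\Q)|=p-1$ (a cyclic image of order divisible by $p$ lies in a scalar-by-unipotent subgroup, whose determinants are squares and cannot exhaust $\F_p^\times$), hence $\Q(E[p])\subseteq\Qz[p]$; the two eigenlines of a generator are $G_\Q$-stable, and Mazur's torsion theorem together with an inspection of the relevant split/non-split Cartan modular curves shows that the $p$-primary part does not grow past $\Qz[p]$, the only extra case for $p\ge5$ being a rational $25$-isogeny, dealt with through the constraints on the $25$-isogeny character (a character of order $20$ would produce a rational point of order $25$ on the isogenous curve, against Mazur). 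If instead $E[p]\not\subseteq E(\Qz[p][\infty])$, then $E(\Qz[p][\infty])[p^\infty]$ is cyclic and one re-runs the previous paragraph with $\ell=p$: $\langle P\rangle$ is $G_\Q$-stable, $p^k\le163$, and $\Gal[\Q(P)][\Q]\hookrightarrow(\Z/p^k\Z)^\times$, which for $p\ge5$ again pins $P$ down to $\Qz[p]$. Finally, any torsion point that ends up in a layer $\Q[n][p]$ of the $\Z_p$-extension is, by Theorem~\ref{bigpresult} for $p\ge5$ (respectively Theorems~\ref{p2result} and \ref{teo:p_jednak_3} for $p=2$ and $p=3$), already defined over $\Q$, which closes the remaining cyclic cases.

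The hard part is the analysis for $p\in\{2,3\}$, and especially the curves with complex multiplication whose CM field embeds into the cyclotomic tower: $j=1728$ (CM by $\Q(i)=\Qz[4]\subseteq\Qz[2][\infty]$) for $p=2$, and $j=0$ (CM by $\Q(\sqrt{-3})=\Qz[3]\subseteq\Qz[3][\infty]$) for $p=3$. For these the tower of $\mathfrak p$-power torsion genuinely keeps growing for several layers — the curves $\href{http://www.lmfdb.org/EllipticCurve/Q/32a4}{32a4}$ and $\href{http://www.lmfdb.org/EllipticCurve/Q/27a4}{27a4}$ of the Remark show that the bounds $\Qz[2^4]$ and $\Qz[3^3]$ are attained — so one must locate \emph{precisely} the layer at which the $\mathfrak p$-power torsion of the CM curve and of all its quadratic twists stops being captured by $\Qz[p][\infty]$. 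This, together with the explicit isogeny-character computations for the sporadic pairs $(11,5)$, $(43,7)$, $(67,11)$, $(17,2)$, $(163,3)$, is where essentially all the work lies.
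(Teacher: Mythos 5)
Your proposal does not address the statement it is supposed to prove. The statement in question is the classification (quoted from \cite{gn}, Theorem 5.7) of all possible degrees $[\Q(P):\Q]$ for a point $P$ of prime order $p$ on an elliptic curve $E/\Q$ --- the table for $p\le 13$ and $p=37$ plus the list of degrees for larger $p$. What you have written is a strategy for an entirely different result, namely Theorem \ref{teo:rast_qzetap} of this paper ($E(\Qz)_\tors=E(\Qz[p])_\tors$ for $p\ge5$, with the analogous statements at $p=2,3$): your very first sentence reformulates the assertion as $E(\Qz[p][\infty])_\tors=E(\Qz[p^{n_p}])_\tors$, and everything that follows is organized around bounding the layer of the cyclotomic tower over which a torsion point can be defined. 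Nothing in your text establishes, or even engages with, the content of the statement at hand: which degrees $[\Q(P):\Q]$ occur for each $p$, why exactly those in the table occur for $p\le 13$ and $p=37$, why no others are possible, and why each listed degree is realized for larger $p$. A proof of that classification requires a case analysis of the possible images of the mod-$p$ Galois representation (Borel, normalizers of Cartan subgroups, exceptional images), the isogeny theorems, CM considerations, and rational points on the relevant modular curves --- none of which appears in your sketch. Note also that the paper itself offers no proof of this theorem; it is imported wholesale from \cite{gn}, so there is no internal argument for you to have reproduced or replaced.

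There is in addition a circularity that would be fatal even if one tried to read your text as a proof of the stated theorem: in your treatment of the sporadic pairs you appeal to ``the minimum allowed in the $\ell$-row of Theorem~\ref{teo:rast_torzije}'', i.e.\ to the very table you would be proving. (As a sketch of Theorem \ref{teo:rast_qzetap} your outline is broadly in the spirit of the paper's Section 5, but it is far from complete --- e.g.\ the $2$-primary analysis via the $X_1(m,mn)$ models and the $2$-adic image data, and the division-polynomial computations for the exceptional isogeny degrees, are only gestured at --- and in any case that is not the task you were given.)
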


\begin{tm}	\cite[Theorem 7.2.]{gn}\label{l-tors}
Let \(p\) be the smallest prime divisor of a positive integer \(d\) and let \(K/\Q\) be a number field of degree \(d\).
\begin{itemize}
\item If \(p \geq 11\), then \(E(K)_{\tors} = E(\Q)_{\tors}\).
\item If \(p = 7\), then \(E(K)[q^\infty] = E(\Q)[q^\infty]\), for all primes \(q \neq 7\).
\item If \(p = 5\), then \(E(K)[q^\infty] = E(\Q)[q^\infty]\), for all primes \(q \neq 5, 7, 11\).
\item If \(p = 3\), then \(E(K)[q^\infty] = E(\Q)[q^\infty]\), for all primes \(q \neq 2,3,5,7,11,13,19,43,67,163\).
\end{itemize}
\end{tm}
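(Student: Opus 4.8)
The plan is to reduce to a torsion point $P\in E(\Q(\mu_{p^\infty}))$ of prime-power order $\ell^{n}$ and to identify the abelian field $\Q(P)$, which by Kronecker--Weber lies in some $\Q(\mu_{p^{m}})$. Two structural remarks make this tractable. First, for $p$ odd, $\Q(\mu_{p^\infty})/\Q(\mu_{p})$ is the cyclotomic $\Z_{p}$-extension of $\Q(\mu_{p})$, with $k$-th layer $\Q(\mu_{p^{k+1}})$ (for $p=2$ one has instead $\Q(\mu_{2^\infty})/\Q(\mu_{4})$ a $\Z_{2}$-extension); the theorem asserts that $E_{/\Q(\mu_{p})}$-torsion does not grow along this tower when $p\ge5$ and grows by at most two layers when $p\le3$. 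Second, for $p$ odd $\Gal(\Q(\mu_{p^{m}})/\Q)\cong\Z/p^{m-1}\Z\times\Z/(p-1)\Z$ has its $p$-Sylow subgroup $\Z/p^{m-1}\Z\times\{0\}=\Gal(\Q(\mu_{p^{m}})/\Q(\mu_{p}))$ as a direct factor, so $\Q(P)\subseteq\Q(\mu_{p^{k}})$ if and only if $v_{p}([\Q(P):\Q])\le k-1$; it therefore suffices to bound $v_{p}([\Q(P):\Q])$, by $0$ for $p\ge5$ and by $2$ for $p=3$. For $p=2$ this last reduction fails --- e.g.\ $\Q(\mu_{2^{m}})^{+}$ has $2$-power degree but is not contained in $\Q(\mu_{16})$ --- and there $\Q(P)$ must be located directly.

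I first dispose of $\ell\ne p$. For odd $\ell\ne p$, the Weil pairing (Proposition \ref{pro:weil}) gives $\mu_{\ell}\not\subseteq\Q(\mu_{p^\infty})$, so $E(\Q(\mu_{p^\infty}))[\ell^{\infty}]$ is cyclic; as the $\ell$-primary part of the torsion of $E$ over a field Galois over $\Q$ it is $\Q$-rational, and it is finite by Lemma \ref{lem-j-k_isog} and Theorem \ref{teo:mazur_izogenije}, hence the kernel of a $\Q$-rational $\ell^{n}$-isogeny. Thus $\Q(P)$ is cut out by the isogeny character $\psi\colon G_{\Q}\to(\Z/\ell^{n}\Z)^{\times}$ and $[\Q(P):\Q]\mid\ell^{n-1}(\ell-1)$, whose $p$-part is the $p$-part of $\ell-1$. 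If $p\nmid\ell-1$ then $\Q(P)\subseteq\Q(\mu_{p})$; otherwise Theorem \ref{teo:mazur_izogenije} leaves only $(\ell,p)\in\{(11,5),(43,7),(67,11)\}$ for $p\ge5$ (and a short finite list for $p\le3$), and for these the $p$-part of $\psi$ is unaffected by quadratic twisting and --- for the finitely many base curves, read off from \cite{lmfdb} --- is either trivial or tamely ramified at $\ell$ (these curves have potentially good or multiplicative reduction at $\ell$, and a $p$-power-order character unramified everywhere would be trivial, yielding a rational point of order $\ell$ against Mazur); so if $p\mid[\Q(P):\Q]$ then $\Q(P)$ is ramified at $\ell\ne p$, impossible inside $\Q(\mu_{p^\infty})$, and hence $\Q(P)\subseteq\Q(\mu_{p})$. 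The case $\ell=2$ with $p$ odd is immediate: a $2$-power-torsion point of $E/\Q$ has $[\Q(P):\Q]\mid|\GL_{2}(\Z/2^{j}\Z)|$, which is divisible only by $2$ and $3$.

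The core is $\ell=p$. Splitting $\Gal(\Q(P)/\Q)$ (a finite quotient of $\Z_{p}^{\times}$) into its prime-to-$p$ and $p$-parts writes $\Q(P)=L\cdot\Q[a][p]$, where $L\subseteq\Q(\mu_{p})$ has degree prime to $p$ and $a=v_{p}([\Q(P):\Q])$; it suffices to bound $a$. When $L=\Q$, $P\in E(\Q[p])_{\tors}$, so $a=0$ for $p\ge5$ by Theorem \ref{bigpresult}, and $a\le2$ for $p=3$ by Theorem \ref{teo:p_jednak_3} (sharp: the order-$27$ point of $27a4$ lies in $\Q[2][3]\subseteq\Q(\mu_{27})$). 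When $L\ne\Q$ one must exclude $a\ge1$ for $p\ge5$; this is the crux, settled by analysing the $p$-adic representation: if $E(\Q(\mu_{p^\infty}))[p^\infty]$ is non-cyclic then $E[p]\subseteq E(\Q(\mu_{p^\infty}))$, which (surjective determinant, no $p$-torsion in a Cartan, a nontrivial unipotent confining the determinant to squares) forces $\bar\rho_{E,p}$ to have cyclic image of order $p-1$, i.e.\ $E[p]\subseteq E(\Q(\mu_{p}))$, and a cocycle computation then shows $\sigma(P)-P$ for $\sigma\in\Gal(\Q(\mu_{p^\infty})/\Q(\mu_{p}))$ lies in the $\bar\rho_{E,p}$-fixed line of $E[p]$ --- reducing to excluding $E/\Q$ with $E[p]\subseteq E(\Q(\mu_{p}))$ and a rational point of order $p$ (such an $E$ would have a point of order $p^{2}$ over $\Q(\mu_{p^{2}})$, contradicting the classification of abelian $p^{2}$-division fields and Mazur's isogeny theorem); while if $E(\Q(\mu_{p^\infty}))[p^\infty]$ is cyclic, the attached $\Q$-rational $p^{n}$-isogeny character is wildly ramified at $p$ (since $a\ge1$), which makes its kernel of multiplicative type at $p$, so that $E$ has a $\mu_{p^{n}}$-submodule or large rational $p$-power torsion, bounded by Mazur's theorem to force $a=0$. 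For $p=3$ the same scheme, pushed through the curves with $9$- and $27$-isogenies, gives $a\le2$, i.e.\ $\Q(P)\subseteq\Q(\mu_{27})$; for $p=2$ one combines the $2$-adic image analysis underlying Theorem \ref{p2result} with the classification of abelian $2$-division fields to show directly $\Q(P)\subseteq\Q(\mu_{16})$, which is sharp by $32a4$.

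The hard part will be the case $\ell=p$ for $p\in\{2,3\}$: unlike $p\ge5$ one cannot simply exclude $p$-power torsion growth but must bound it precisely, which requires a careful study of the $p$-adic Galois representation and of the curves with large $p$-power isogenies (the modular curves $X_{0}(27)$, $X_{0}(16)$ and their quadratic twists), and --- for $p=2$ --- the exclusion of torsion fields such as $\Q(\mu_{2^{m}})^{+}$ that have bounded $2$-valuation of degree yet are not contained in $\Q(\mu_{16})$. A secondary, more mechanical obstacle is the finite case-check for the exceptional pairs $(\ell,p)$ in the $\ell\ne p$ step, together with the exclusion, for every $p$, of an $E/\Q$ with $E[p]\subseteq E(\Q(\mu_{p}))$ that acquires a point of order $p^{2}$ over $\Q(\mu_{p^{2}})$.
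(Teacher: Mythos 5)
Your proposal does not address the statement it was supposed to prove. Theorem \ref{l-tors} (which is \cite[Theorem 7.2]{gn}, quoted in this paper without proof) concerns an \emph{arbitrary} number field $K$ of degree $d$ whose smallest prime divisor is $p$ --- such a $K$ need not be Galois, need not be abelian, and need not lie in any cyclotomic field --- and its conclusion is that the $q$-primary torsion of $E/\Q$ does not grow from $\Q$ to $K$ for all primes $q$ outside an explicit list depending on $p$. What you have sketched instead is a bound on where, in the tower $\Q(\mu_{p^m})$, the torsion of $E$ over $\Q(\mu_{p^\infty})$ is defined; that is essentially Theorem \ref{teo:rast_qzetap} of this paper, not Theorem \ref{l-tors}. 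Your opening move, invoking Kronecker--Weber to place $\Q(P)$ inside some $\Q(\mu_{p^m})$, is simply unavailable here, since $K$ is not assumed abelian over $\Q$; and nowhere do you use the one hypothesis the theorem actually provides, namely that every prime dividing $d$ is at least $p$ (which matters because $\Q(P)\subseteq K$ forces $[\Q(P):\Q]\mid d$, so every prime factor of $[\Q(P):\Q]$ is at least $p$). Symptomatically, much of your effort goes into a $p=2$ analysis (layers of $\Q(\mu_{2^\infty})$, the field $\Q(\mu_{16})$, the curve 32a4), while the statement has no $p=2$ bullet at all, and your conclusions (``torsion appears by layer $k$ of a cyclotomic tower'') are not the conclusions of the theorem (equality of $q$-primary parts for $q$ outside a finite list).

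The actual mechanism of the cited result is different and more elementary in structure: for a point $P\in E(K)$ of prime order $q$, the divisibility $[\Q(P):\Q]\mid d$ restricts the prime factors of $[\Q(P):\Q]$ to primes $\geq p$, and comparison with the classification of possible degrees of $q$-torsion point fields (Theorem \ref{teo:rast_torzije} here) eliminates all $q$ outside the stated lists for each value of $p$; points of prime-power order are then handled via degree-divisibility statements such as Proposition \ref{pro:gonzalo_najman} together with the isogeny bounds of Theorem \ref{teo:mazur_izogenije}. None of this appears in your write-up. As a sketch of Theorem \ref{teo:rast_qzetap} your text has some merit (and overlaps with the strategy of Section 5 of the paper), but as a proof of Theorem \ref{l-tors} it is a non-starter: the statement quantifies over all number fields of the given degree, and your argument never engages with that generality.
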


We now prove a lemma that we will find useful.

\begin{lemma} \label{ciklicko_prosirenje}
Let $p$ and $q$ be prime numbers such that $q - 1 \nmid p$ and $p \nmid q - 1$. Let $K/\Q$ be a cyclic extension of degree $p$, and $P \in E$ a point of degree $q$. If $P\in E(K)$, then $P \in E(\Q)$.
\end{lemma}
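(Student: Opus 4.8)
The plan is to exploit the Galois action of $\Gal[K][\Q]$ on $P$ and reduce to an elementary orbit‑counting argument. Since $K/\Q$ is Galois and $P\in E(K)$, each conjugate $\sigma(P)$ with $\sigma\in\Gal[K][\Q]$ again lies in $E(K)$ and still has order $q$. Fix a generator $\sigma$ of the cyclic group $\Gal[K][\Q]\simeq\Z/p\Z$. The stabilizer of $P$ inside $\langle\sigma\rangle$ is a subgroup of a group of prime order $p$, so the orbit $\{P,\sigma P,\dots,\sigma^{p-1}P\}$ has size $1$ or $p$. If it has size $1$, then $P$ is fixed by all of $\Gal[K][\Q]$, hence $P\in E(K)^{\Gal[K][\Q]}=E(\Q)$ and we are done. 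It therefore suffices to rule out the case in which the orbit has size $p$.

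Assume the orbit has size $p$, and let $W\le E(K)[q]$ be the subgroup generated by $P,\sigma P,\dots,\sigma^{p-1}P$. By construction $W$ is $\sigma$-stable, hence stable under all of $\Gal[K][\Q]$. As $W$ is a nontrivial subgroup of the $\F_q$-vector space $E[q]\simeq(\Z/q\Z)^2$, we have $|W|=q$ or $|W|=q^2$, and I would obtain a contradiction in each case, using one of the two divisibility hypotheses in each. If $|W|=q^2$, then $E[q]=W\subseteq E(K)$, so Proposition~\ref{pro:weil} gives $\Q(\zeta_q)\subseteq K$; comparing degrees forces $q-1\mid p$, contradicting $q-1\nmid p$. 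If $|W|=q$, then since $P\in W$ has order $q$ we must have $W=\langle P\rangle$, so $\sigma P=aP$ for some $a\in(\Z/q\Z)^{\times}$ with $a\neq 1$ (the orbit being nontrivial). Since the Galois action commutes with the group law, $\sigma^{j}P=a^{j}P$ for every $j$, and $\sigma^{p}=\mathrm{id}$ yields $a^{p}\equiv 1\pmod q$; hence $a$ has order exactly $p$ in $(\Z/q\Z)^{\times}$, so $p\mid q-1$, contradicting $p\nmid q-1$.

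The one point to be careful about — and the only genuine subtlety — is that the $\Gal[K][\Q]$-span $W$ of $P$ need not a priori equal the cyclic group $\langle P\rangle$; it is exactly this that forces the two-case split and explains why both divisibility hypotheses are needed (one handles the ``Weil pairing'' case $E[q]\subseteq E(K)$, the other the ``scalar'' case). I expect no appeal to Mazur's isogeny theorem or to the degree tables of \cite{gn} to be necessary. As a sanity check, the hypotheses already exclude $q=2$, since then $q-1=1\mid p$.
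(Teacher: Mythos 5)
Your proof is correct and complete. Note that the paper never actually prints an argument for this lemma (the text announces ``We now prove a lemma that we will find useful'' and then simply states it and moves on), so there is no official proof to compare against; your orbit--stabilizer argument is the natural one and uses each of the two divisibility hypotheses exactly where it must: the Weil-pairing/cyclotomic bound disposes of the case $W=E[q]$, and the scalar action disposes of the case $W=\langle P\rangle$. One cosmetic variant worth knowing: since every subfield of the cyclic extension $K/\Q$ is itself cyclic over $\Q$, one can begin from $[\Q(P):\Q]\in\{1,p\}$ instead of from orbit size, but the two resulting contradictions are the same as yours. Your closing remarks --- that $W$ need not equal $\langle P\rangle$ a priori, and that the hypotheses already exclude $q=2$ --- are both accurate and show you understand why both hypotheses are genuinely needed.
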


The following lemma will tell us how far up the tower we have to go to find a point of order $n$, if such a point exists.

\begin{lemma} \label{lem:sloj_tocke}

Let $E/\Q$ be an elliptic curve and $P\in  E$ a point of order $n$ such that $\Q(P)/\Q$ is Galois and let $E(\Q(P))[n]\simeq \Z/n\Z$. Then $\Gal(\Q(P)/\Q)$ is isomorphic to a subgroup of $(\Z/n\Z)^\times$.
\end{lemma}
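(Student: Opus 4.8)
The plan is to turn the Galois action on the cyclic subgroup generated by $P$ into a character with values in $(\Z/n\Z)^\times$ and then identify its kernel. First I would observe that since $E(\Q(P))[n]$ is cyclic of order $n$ and contains the point $P$, which itself has order $n$, we must have $E(\Q(P))[n] = \langle P\rangle$.

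Next comes the one place where the hypotheses really enter. Because $\Q(P)/\Q$ is Galois, every $\sigma \in \Gal(\Qbar/\Q)$ restricts to an automorphism of $\Q(P)$, so $\sigma(P) \in E(\Q(P))$; and since Galois conjugation preserves the order of a torsion point, $\sigma(P)$ again has order $n$, hence $\sigma(P) \in E(\Q(P))[n] = \langle P\rangle$. Thus $\langle P\rangle$ is a $\Gal(\Qbar/\Q)$-stable cyclic subgroup of $E[n]$. The action of $\Gal(\Qbar/\Q)$ on $\langle P\rangle \simeq \Z/n\Z$ then defines a homomorphism $\chi \colon \Gal(\Qbar/\Q) \to \Aut(\langle P\rangle) \simeq (\Z/n\Z)^\times$, characterized by $\sigma(P) = \chi(\sigma)\,P$. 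An element $\sigma$ lies in $\Ker\chi$ precisely when $\sigma(P) = P$, i.e.\ precisely when $\sigma$ fixes $\Q(P)$ pointwise, so $\Ker\chi = \Gal(\Qbar/\Q(P))$. Consequently $\chi$ factors through the quotient $\Gal(\Q(P)/\Q)$ and induces an \emph{injective} homomorphism $\Gal(\Q(P)/\Q) \hookrightarrow (\Z/n\Z)^\times$, which is exactly the claim.

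I do not expect any real obstacle here: the argument is short, and the hypotheses are used minimally but essentially. The Galois condition on $\Q(P)/\Q$ is what makes $\sigma(P)$ lie back in $E(\Q(P))$, and the cyclicity hypothesis $E(\Q(P))[n]\simeq\Z/n\Z$ is what forces $\sigma(P)\in\langle P\rangle$ (rather than merely $\sigma(P)\in E[n]$); together they give the $\Gal$-stability of $\langle P\rangle$, after which everything is formal. The only mild care needed is to state clearly that $\Aut(\Z/n\Z)\simeq(\Z/n\Z)^\times$ and that fixing the point $P$ is the same as fixing the field $\Q(P)$ it generates.
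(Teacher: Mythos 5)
Your proof is correct, and it is the natural (indeed essentially the only) argument: the paper itself omits the proof of this lemma, treating it as routine, so there is nothing to contrast against. You use both hypotheses exactly where they are needed — the Galois condition to get $\sigma(P)\in E(\Q(P))$, and the cyclicity $E(\Q(P))[n]\simeq\Z/n\Z$ to conclude $\sigma(P)\in\langle P\rangle$ — and correctly identify $\Ker\chi=\Gal(\Qbar/\Q(P))$ because fixing the point $P$ is the same as fixing its coordinates, i.e.\ fixing $\Q(P)$ pointwise. The resulting embedding $\Gal(\Q(P)/\Q)\hookrightarrow(\Z/n\Z)^\times$ is exactly the claim.
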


We immediately obtain the following corollary.

\begin{corollary}
\label{kojepolje}
Let $P\in  E$ be a point of order $n$ such that $\Q(P)\subseteq \Q_{\infty,p}$. Then $\Q(P)\subseteq \Q_{m,p}$, where $m=v_p(\phi(n))$.
\end{corollary}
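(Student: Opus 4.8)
The plan is to deduce this directly from Lemma~\ref{lem:sloj_tocke} together with the basic structure of the $\Z_p$-extension $\Q[p]$. Since $\Gal[\Q[p]][\Q]\simeq\Z_p$ is abelian, the subextension $\Q(P)/\Q$ is automatically Galois, and its Galois group is a finite quotient of $\Z_p$, hence cyclic of order $p^k$ for some $k\ge 0$. As the layers $\Q[j][p]$ are the only finite subextensions of $\Q[p]/\Q$, this forces $\Q(P)=\Q[k][p]$, so everything reduces to showing that $k\le m=v_p(\phi(n))$.

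To apply Lemma~\ref{lem:sloj_tocke} I first need to know that $E(\Q(P))[n]\simeq\Z/n\Z$. I would argue this as follows: if it failed, then $E(\Q(P))$ would contain a copy of $E[\ell]$ for some prime $\ell\mid n$, and Proposition~\ref{pro:weil} would give $\Q(\zeta_\ell)\subseteq\Q(P)\subseteq\Q[p]$. For odd $\ell$ this is impossible, because $\Q[p]/\Q$ is ramified only at $p$ (see \cite[Ch.~13]{washington}) while $\Q(\zeta_\ell)/\Q$ is ramified at $\ell$, forcing $\ell=p$; but then $[\Q(\zeta_p):\Q]=p-1$ would have to be a power of $p$, which it is not. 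Granting the cyclicity, Lemma~\ref{lem:sloj_tocke} produces an embedding $\Z/p^k\Z=\Gal[\Q(P)][\Q]\hookrightarrow(\Z/n\Z)^\times$, so $p^k$ divides $\#(\Z/n\Z)^\times=\phi(n)$, i.e.\ $k\le v_p(\phi(n))=m$. Hence $\Q(P)=\Q[k][p]\subseteq\Q[m][p]$, as desired.

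The step I expect to require the most care is verifying the hypothesis $E(\Q(P))[n]\simeq\Z/n\Z$ of Lemma~\ref{lem:sloj_tocke} when $\ell=2$ divides $n$: here $\Q(\zeta_2)=\Q$ gives no obstruction from the Weil pairing, so one must rule out $E[2]\subseteq E(\Q[p])$ by a separate argument, controlling which abelian subfields of the $S_3$-extension $\Q(E[2])/\Q$ can sit inside $\Q[p]$ (for instance, which of them can coincide with a first layer $\Q[1][2]$ or $\Q[1][3]$). Once that subtlety is handled, the remainder is exactly the short index computation above.
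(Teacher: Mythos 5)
Your plan — verify the hypothesis $E(\Q(P))[n]\simeq\Z/n\Z$ of Lemma~\ref{lem:sloj_tocke}, then read off the embedding $\Gal[\Q(P)][\Q]\hookrightarrow(\Z/n\Z)^\times$ and compare $p$-adic valuations — is exactly what the paper silently does when it calls the corollary ``immediate.'' Your treatment of the odd primes $\ell\mid n$ is correct (even cleaner: $\Q[p]$ is totally real, so $\zeta_\ell\notin\Q[p]$ for $\ell\geq 3$ and hence $E[\ell]\nsubseteq E(\Q(P))$ by the Weil pairing), and the closing index computation is right.

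The concern you flag at the end about $\ell=2$ is, however, not a subtlety that can be ``handled'' by a further argument: it is a genuine gap, and it shows the corollary is not literally true as stated. One \emph{cannot} in general rule out $E[2]\subseteq E(\Q[p])$. Take $E\colon y^2=x^3-2x$; then $\Q(E[2])=\Q(\sqrt{2})=\Q[1][2]$, and for $P=(\sqrt{2},0)$ of order $n=2$ one has $\Q(P)=\Q[1][2]\subseteq\Q[2]$, yet $m=v_2(\phi(2))=0$, so the asserted containment $\Q(P)\subseteq\Q[0][2]=\Q$ fails. (Similarly with $p=3$ whenever $\Q(E[2])=\Q(\zeta_9)^+=\Q[1][3]$.) The failure is precisely that once $E[2]\subseteq E(\Q(P))$ a conjugate $\sigma P$ need not lie in $\langle P\rangle$, so the embedding into $(\Z/n\Z)^\times$ breaks down. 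The corollary therefore needs an extra hypothesis — carrying over $E(\Q(P))[n]\simeq\Z/n\Z$ from the lemma, or restricting to odd $n$, or assuming $\langle P\rangle$ is Galois-stable as in Corollary~\ref{kor:izogenija_stupanj} — under any of which your argument goes through verbatim. Without one, both the paper's one-line derivation and your reconstruction of it are incomplete.
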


\begin{proposition}
\label{prop_pdiv}
Let $E/F$ be an elliptic curve  over a number field $F$, $n$ a positive integer, $P \in E$ be a point of order $p^{n+1}$ such that $E(F(pP))$ has no points of order $p^{n+1}$ and such that $F(P)/F(pP)$ is Galois. Then $[F(P):F(pP)]$ divides $p^2$.
\end{proposition}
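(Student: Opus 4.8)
The plan is to put $L:=F(pP)$ and to study $G:=\Gal(L(P)/L)$ through the cocycle attached to $P$. Since $pP\in E(L)$, every $\sigma\in\Gal(\overline F/L)$ satisfies $p(\sigma P-P)=\sigma(pP)-pP=O$, so $\sigma P-P\in E[p]$; because $L(P)/L$ is Galois this descends to a well-defined map $c\colon G\to E[p]$, $c(\sigma)=\sigma P-P$, which is a $1$-cocycle for the natural action of $G$ on $V:=E[p]\cap E(L(P))$ and is injective as a map of sets (if $c(\sigma)=O$ then $\sigma$ fixes $P$, hence $\sigma=1$ in $G$). In particular $[L(P):L]=|G|\le|E[p]|=p^2$, so everything reduces to showing that $|G|$ is a power of $p$.

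If $\dim_{\F_p}V\le 1$ this is immediate: since $p^nP=p^{n-1}(pP)\in E(L)$ has order $p$ and lies in $V$, we must have $V=\langle p^nP\rangle$, and $G$ fixes this line pointwise; thus $c$ is a genuine homomorphism $G\to V\simeq\Z/p\Z$, and $[L(P):L]$ divides $p$. So from now on assume $E[p]\subseteq E(L(P))$. Then $G$ acts on $E[p]$, $c$ is a $1$-cocycle for that action, and as $G$ fixes the vector $p^nP$ its image $W$ in $\Aut(E[p])$ lies in the stabilizer of $p^nP$, a group isomorphic to $\F_p\rtimes\F_p^\times$ whose $p$-Sylow is its unipotent subgroup. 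Suppose first that $\zeta_p\in L$; then the determinant of the $G$-action on $E[p]$, which by the Weil pairing is the mod-$p$ cyclotomic character, is trivial, so $W$ is unipotent. Choosing a basis $\{p^nP,e\}$ of $E[p]$ in which $G$ acts by $p^nP\mapsto p^nP$ and $e\mapsto\lambda(\sigma)p^nP+e$ with $\lambda\colon G\to\F_p$ a homomorphism, and writing $c(\sigma)=\mu(\sigma)\,p^nP+\nu(\sigma)\,e$, the cocycle identity forces $\nu$ to be a homomorphism $G\to\F_p$, and on $\ker\nu$ the map $\mu$ is a homomorphism as well; since $(\mu,\nu)$ is injective on $G$ we get $|G|=|\ker\nu|\cdot|G/\ker\nu|$ dividing $p\cdot p=p^2$, as wanted.

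It remains to eliminate the possibility $\zeta_p\notin L$ in this last case — equivalently, to show that the prime-to-$p$ part of $|G|$ is trivial — and this is where the Galois hypothesis on $L(P)/L$ and, above all, the assumption that $E(F(pP))$ contains no point of order $p^{n+1}$ must be used. The approach I would take is: by Schur–Zassenhaus write $G=G_p\rtimes C$ with $G_p=\Gal(L(P)/L(\zeta_p))$ a $p$-group and $C$ cyclic of order $d:=[L(\zeta_p):L]$ dividing $p-1$; then average $P$ over $C$, setting $P':=d^{-1}\sum_{\sigma\in C}\sigma P$ (which makes sense because $\gcd(d,p)=1$). Each summand lies in $P+E[p]$, so $P'\in P+E[p]$, whence $P'$ again has order $p^{n+1}$, while by construction $P'$ is fixed by $C$. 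One then wants to propagate this fixedness across $G_p$ — or, equivalently, to verify that $L(P')/L$ is Galois and run an induction on $[L(P):L]$ — so as to produce a point of order $p^{n+1}$ defined over $L=F(pP)$, contradicting the hypothesis and forcing $d=1$.

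I expect the crux of the whole argument to be precisely this last descent: controlling the interaction between the unipotent part $G_p$ and the torus part $C$ tightly enough — using the constraints that the Weil pairing and the hypothesis ``$L(P)/L$ is Galois'' impose on the image of $\Gal(\overline F/L)$ in $\GL_2(\Z/p^{n+1}\Z)$ — that the averaged point descends all the way to the ground field. Note that the Galois assumption is genuinely indispensable here: without it $[L(P):L]$ is still at most $p^2$ but need not divide $p^2$, so any proof must exploit normality of the stabilizer of $P$ at exactly this point.
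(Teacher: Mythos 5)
Your first two cases are correct. The cocycle $c(\sigma)=\sigma P-P$ is indeed an injective map $G=\Gal(L(P)/L)\to V=E[p]\cap E(L(P))$, and both the case $\dim_{\F_p}V\le 1$ (so $V=\langle p^nP\rangle$ is fixed pointwise and $c$ is an injective homomorphism into $\Z/p\Z$) and the case $E[p]\subseteq E(L(P))$ with $\zeta_p\in L$ (unipotent image, the $(\mu,\nu)$ computation) give $[L(P):L]\mid p^2$ as you claim.

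The remaining case, $E[p]\subseteq E(L(P))$ and $\zeta_p\notin L$, is a genuine gap, as you yourself acknowledge, and your sketch does not close it. Averaging over a Hall $p'$-complement $C$ produces a point $P'$ of order $p^{n+1}$ with $pP'=pP$ fixed by $C$, but this only places $P'$ in $E(L(P)^C)$, a $p$-power-degree extension of $L$; it gives no point of order $p^{n+1}$ in $E(L)$ and hence no contradiction. Nor can one push $P'$ further down by more cocycle modifications: the hypothesis forces $[c]\ne 0$ in $H^1(G,E[p])$ (a coboundary $\delta v$ would give the $G$-fixed point $P-v$ of order $p^{n+1}$ in $E(L)$), restriction to a Sylow $p$-subgroup $G_p$ is injective on this $p$-torsion group, so $[c|_{G_p}]\ne 0$ and no translate $P-v$ is fixed by all of $G_p$. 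The fall-back you propose, verifying $L(P')/L$ is Galois and inducting, needs $\operatorname{Stab}_G(P')\supseteq C$ to be normal in $G$, which you have not established, and in any case a bound on $[L(P'):L]$ does not bound $[L(P):L]=|G|$, which is what the statement is about. Note moreover that the abstract data you have isolated (an injective cocycle $c\colon G\to E[p]$ valued in the stabilizer of $p^nP$ with $[c]\ne 0$) is by itself compatible with $|G|=p(p-1)$, so whatever rules out $d>1$ must bring in more arithmetic than this sketch does. For comparison with the paper: there is no proof there to check against — the remark after Proposition~\ref{prop_pdiv} only states that it is a ``version of \cite[Proposition 4.6.]{gn} with stronger assumptions,'' deferring entirely to that reference.
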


\begin{remark}
Proposition \ref{prop_pdiv} is a version of \cite[Proposition 4.6.]{gn} with stronger assumptions.
\end{remark}

\section{Proof of Theorem \ref{teo:nad_5_nista}}
We know that $\Gal[\Kpet][\Q] \simeq \prod\limits_{p \geq 5 \text{ prime}}\Z_p$. Therefore, we see that the following holds:

Let $\F$ be a number field contained in $\Kpet$ and let $[\F : \Q] = d$. If $p$ is the smallest prime divisor of $d$, then $p \geq 5$. Any point in $E(\Kpet)_\tors$ is defined over some finite extension $\F$ of $\Q$, where $\Q \subseteq \F \subseteq \Kpet$. Using the Theorem \ref{teo:rast_torzije} we can see that \[E(\Kpet)[q^\infty] = E(\Q)[q^\infty],\] for all prime numbers $q$ different from $5$, $7$ and $11$. Therefore, it remains to show that
\[E(\Kpet)[5^\infty] = E(\Q)[5^\infty], \quad E(\Kpet)[7^\infty] = E(\Q)[7^\infty] \quad \text{and} \quad E(\Kpet)[11^\infty] = E(\Q)[11^\infty],\]
which is proven by the following three Lemmas.

Before stating and proving these Lemmas, let us mention that the Corollary \ref{kor:ubij_torziju} holds for the field $\Kpet$. It is enough to notice that $\Kpet$ is an cyclic extension of $\Q$ and therefore so are all subfields of $\Kpet$. Furthermore, the only roots of unity contained in $\Kpet$ are $\pm 1$, so we have all the necessary ingredients for proving Corollary \ref{kor:ubij_torziju} in the case when the field is $\Kpet$.

\begin{lemma}\label{lem:nad_Kpet_nema_5}
Let $E /\Q$ be an elliptic curve. Then
\[E(\Kpet)[5^\infty] = E(\Q)[5^\infty].\]
\end{lemma}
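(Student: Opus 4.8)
The plan is to show that $E$ cannot gain a point of order $5$ over $\Kpet$, and then bootstrap to kill all $5$-power torsion. First I would suppose, for contradiction, that $P \in E(\Kpet)$ has order $5$. Since $P$ is defined over a finite subextension $\F/\Q$ with $\F \subseteq \Kpet$, and $\Gal[\Kpet][\Q] \simeq \prod_{p\geq 5}\Z_p$, the field $\Q(P) \subseteq \Kpet$ is abelian over $\Q$ with degree divisible only by primes $\geq 5$. By Theorem \ref{teo:rast_torzije}, the degree $[\Q(P):\Q]$ must be one of $1,2,4,5,8,10,16,20,24$; intersecting with ``all prime divisors are $\geq 5$'' forces $[\Q(P):\Q] \in \{1,5\}$. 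The case $[\Q(P):\Q]=1$ gives $P \in E(\Q)$, which is what we want, so the crux is to rule out $[\Q(P):\Q]=5$.

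So suppose $\Q(P)/\Q$ is cyclic of degree $5$ (it is cyclic as a subextension of $\Kpet$, in fact it sits inside $\Q[5]$ since $5 \mid \phi(5^k)$ only, by Corollary \ref{kojepolje}). Now I would play off the Galois action: $\Gal(\Q(P)/\Q) \simeq \Z/5\Z$ acts on the group generated by $P$, i.e. we get a homomorphism $\Z/5\Z \to \Aut(\Z/5\Z) \simeq \Z/4\Z$, which must be trivial since $\gcd(5,4)=1$. Hence every Galois conjugate of $P$ equals $P$, so $P \in E(\Q)$ — contradicting $[\Q(P):\Q]=5$. (This is exactly the mechanism of Lemma \ref{ciklicko_prosirenje} with $p=5$, $q=5$: $q-1 = 4 \nmid 5$ and $5 \nmid 4$, so the hypotheses are met.) Thus $E(\Kpet)[5] = E(\Q)[5]$.

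To upgrade from order $5$ to all of $5^\infty$-torsion, I would argue inductively on $n$ that $E(\Kpet)[5^n] = E(\Q)[5^n]$. If $E(\Kpet)$ contained a point of order $5^{n+1}$ but not one of order $5^{n+2}$ beyond what lies over $\Q$, pass to the point $Q$ of order $5^{n+1}$ of minimal ``new'' level; then $\Q(5Q) \subseteq \Q(Q)$, and by Proposition \ref{prop_pdiv} the degree $[\Q(Q):\Q(5Q)]$ divides $25$. Chaining these, the full field $\Q(Q)/\Q$ is a $5$-power-degree abelian extension, again forced inside $\Q[5]$ by Corollary \ref{kojepolje}; but then $E(\Q[5])$ would contain a point of order $5^{n+1}$ strictly larger than $E(\Q)$, contradicting Theorem \ref{bigpresult} ($E(\Q[5])_\tors = E(\Q)_\tors$). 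This closes the induction and gives $E(\Kpet)[5^\infty] = E(\Q)[5^\infty]$.

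The main obstacle, I expect, is making the reduction to $\Q[5]$ fully rigorous: one must be careful that a $5$-power-degree abelian subextension of $\Kpet$ really does lie in the single $\Z_5$-extension $\Q[5]$ (this is where one uses that $\Gal[\Kpet][\Q]$ is the \emph{product} $\prod_{p\geq 5}\Z_p$, so its only finite quotients of $5$-power order factor through the $\Z_5$-component), together with the Galois-theoretic bookkeeping needed so that Proposition \ref{prop_pdiv} and Corollary \ref{kojepolje} apply — in particular that the relevant subfields are Galois over $\Q$. Once that is set up, the contradiction with Theorem \ref{bigpresult} is immediate, and the same template will handle $q = 7$ and $q = 11$ in the subsequent lemmas with only the numerical inputs changed.
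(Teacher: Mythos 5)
Your proposal is correct, and it fleshes out in detail what the paper leaves implicit (the paper's proof simply asserts that the argument of \cite[Lemma 3.6]{CDKN} carries over once one observes that Corollary \ref{kor:ubij_torziju} holds over $\Kpet$). The ingredients you use — the degree list from Theorem \ref{teo:rast_torzije} together with the fact that $[\F:\Q]$ has only prime factors $\geq 5$ for $\F \subseteq \Kpet$, the cyclicity of $E(\Kpet)[5]$ coming from the Weil pairing, and Proposition \ref{prop_pdiv} to push up the $5$-power tower — are exactly what the CDKN argument uses. The one place where you organize things a little differently is the final step: rather than rerunning the CDKN argument directly over $\Kpet$, you reduce the would-be new torsion point to being defined over $\Q[5]$ and then invoke Theorem \ref{bigpresult} to finish. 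That is a perfectly valid and arguably tidier packaging. One small imprecision worth flagging: Corollary \ref{kojepolje} as stated assumes $\Q(P) \subseteq \Q[p]$, not $\Q(P) \subseteq \Kpet$, so it does not literally apply; what you actually need is that a finite abelian subextension of $\Kpet$ of $5$-power degree must factor through the $\Z_5$-component of $\Gal[\Kpet][\Q] \simeq \prod_{p\geq 5}\Z_p$ and hence lie in $\Q[5]$ — which you correctly identify in your closing paragraph as the step to make rigorous.
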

\begin{proof}
Keeping in mind the above observation, we can see that the proof is analogous to the proof of \cite[Lemma 3.6.]{CDKN}.
\end{proof}
\begin{lemma}\label{lem:nad_Kpet_nema_7}
Let $E /\Q$ be an elliptic curve. Then
\[E(\Kpet)[7^\infty] = E(\Q)[7^\infty].\]
\end{lemma}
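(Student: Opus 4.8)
The plan is to rule out any growth in the $7$-primary part of the torsion when passing from $\Q$ to $\Kpet$, by combining the cyclotomic obstruction (Proposition \ref{pro:weil} and Corollary \ref{cor-weil}) with the degree constraints coming from the structure $\Gal[\Kpet][\Q]\simeq\prod_{p\ge 5}\Z_p$ and with Theorem \ref{teo:rast_torzije}. By Corollary \ref{kor:ubij_torziju} (which, as noted above, applies verbatim to $\Kpet$), any point in $E(\Kpet)_\tors$ of $7$-power order has order dividing $7$, so $E(\Kpet)[7^\infty]$ is either $E(\Q)[7^\infty]$ or it acquires a new point of order $7$; and since $\mu_7\not\subseteq\Kpet$, Proposition \ref{pro:weil} forbids $E[7]\subseteq E(\Kpet)$, so at worst $E(\Kpet)[7^\infty]\simeq\Z/7\Z$. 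Thus it suffices to show that no point $P$ of order $7$ on $E$ can have $\Q(P)\subseteq\Kpet$ unless already $P\in E(\Q)$.

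First I would record that if $P\in E(\Kpet)$ has order $7$ then $\Q(P)$ is a finite abelian extension of $\Q$, cyclic of degree $d$ equal to a product of primes $\ge 5$ (every prime dividing $[\F:\Q]$ for $\F\subseteq\Kpet$ is $\ge 5$; and the relevant subextension is cyclic since $\Gal[\Kpet][\Q]$ is a product of the procyclic groups $\Z_p$). Now I would invoke Theorem \ref{teo:rast_torzije} with $p=7$: the only possible values of $[\Q(P):\Q]$ are $1,2,3,6,7,9,12,14,18,21,24,36,42,48$. Intersecting this list with \emph{integers all of whose prime factors are at least $5$} leaves only $1$ and $7$. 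The value $2$ is excluded, so to finish I must eliminate $d=7$.

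To kill $d=7$: if $[\Q(P):\Q]=7$ then $\Gal[\Q(P)][\Q]\simeq\Z/7\Z$, and since $E(\Q(P))[7]\simeq\Z/7\Z$ (it cannot be all of $E[7]$, as $\mu_7\not\subseteq\Q(P)$), Lemma \ref{lem:sloj_tocke} forces $\Z/7\Z\hookrightarrow(\Z/7\Z)^\times\simeq\Z/6\Z$, a contradiction. Equivalently, one can argue directly: the mod-$7$ representation restricted to $\Gal[\Qbar][\Q(P)]$ fixes $P$, so the image of $\Gal[\Qbar][\Q]$ lies in a Borel; the action on the line $\langle P\rangle$ is a character $\chi$ of conductor controlled by $\Q(P)$, and $\chi$ must be trivial because its order divides both $7$ (from the degree) and $|\F_7^\times|=6$. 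Hence $P\in E(\Q)$, giving $d=1$, and therefore $E(\Kpet)[7^\infty]=E(\Q)[7^\infty]$.

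The main obstacle I anticipate is not any single deep input but making the degree bookkeeping airtight: one must be careful that the field of definition of a single point of order $7$ (as opposed to the full $7$-torsion, or a Galois closure) really does sit inside an abelian, cyclic-by-prime-$\ge5$ tower, so that Theorem \ref{teo:rast_torzije}'s list can be intersected with $\{$products of primes $\ge5\}$; here the cyclicity of $\Kpet/\Q$ and the observation that $\Q(P)/\Q$ is then automatically Galois and cyclic (already used implicitly in Corollary \ref{kor:ubij_torziju}) is what makes Lemma \ref{lem:sloj_tocke} applicable to finish off the leftover case $d=7$.
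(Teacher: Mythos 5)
Your proof is correct and follows the same skeleton as the paper's: rule out order $49$ by Corollary \ref{kor:ubij_torziju}, use Theorem \ref{teo:rast_torzije} together with the degree constraint from $\Gal[\Kpet][\Q]\simeq\prod_{p\geq 5}\Z_p$ to restrict $[\Q(P):\Q]$ to $\{1,7\}$, and then eliminate $d=7$. The one place you diverge is the final step: the paper simply appeals to ``the proof of [CDKN, Theorem 3.1]'' to conclude $P\in E(\Q)$, whereas you close the gap explicitly by invoking Lemma \ref{lem:sloj_tocke} (noting that $E(\Q(P))[7]\simeq\Z/7\Z$ because $\mu_7\not\subseteq\Kpet$, so $\Gal[\Q(P)][\Q]$ embeds in $(\Z/7\Z)^\times\simeq\Z/6\Z$, forcing $[\Q(P):\Q]\mid 6$ and hence $=1$). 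This is a legitimate and arguably cleaner way to finish; you could equally have cited the paper's Lemma \ref{ciklicko_prosirenje} with $p=q=7$ (the hypotheses $q-1\nmid p$ and $p\nmid q-1$ hold since $6\nmid 7$ and $7\nmid 6$), which gives $P\in E(\Q)$ in one line. Your secondary ``character'' phrasing is a restatement of the same fact and is fine, though the assertion that the image of $\Gal[\Qbar][\Q]$ lies in a Borel should be justified via Lemma \ref{lem-j-k_isog} (a point of order $7$ over a Galois field with $E[7]\not\subseteq E(\Kpet)$ gives a rational $7$-isogeny); your primary argument does not need this detour.
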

\begin{proof}
By the Corollary \ref{kor:ubij_torziju} we have that there are no points of order $49$ in $E(\Kpet)$. It remains to show that $E(\Kpet)[7] = E(\Q)[7]$. Theorem \ref{teo:rast_torzije} implies that the only possibility is $P \in E(\Q[1][7])$, where $P \in E(\Kpet)$ is a point of order $7$. As we did in the proof of \cite[Theorem 3.1]{CDKN}, we conclude that $P \in E(\Q)$.
\end{proof}

Before proving that $E(\Kpet)[11^\infty] = E(\Q)[11^\infty]$, we shall state and prove one technical consequence of Lemma \ref{lem:sloj_tocke} that we shall find useful also in the proof of Theorem \ref{teo:na_2_i_3_isto_kao_i_ranije}..
\begin{corollary}\label{kor:izogenija_stupanj}
Let $E /\Q$ be an elliptic curve and let $P \in E$ be a point of order $n$. If $E$ has a rational $n$-isogeny with kernel $\langle P \rangle$, then $[\Q(P) : \Q] \mid \phi(n)$.
\end{corollary}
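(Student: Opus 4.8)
The plan is to exploit that the kernel of a rational $n$-isogeny is a Galois-stable cyclic subgroup of $E$, which forces the field of definition of any generator to be abelian over $\Q$ of controlled degree; this is essentially the same mechanism as in Lemma \ref{lem:sloj_tocke}, but applied to the single point $P$ rather than to the whole $n$-torsion.

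First I would set $G_\Q := \Gal[\Qbar][\Q]$ and let $\phi\colon E\to E'$ be the rational $n$-isogeny with $\ker\phi=\langle P\rangle$. Since $\phi$ is defined over $\Q$, the subgroup $\langle P\rangle$ is stable under $G_\Q$; being cyclic of order $n$ with generator $P$, for every $\sigma\in G_\Q$ there is a unique unit $a_\sigma\in(\Z/n\Z)^\times$ with $\sigma(P)=a_\sigma P$. The assignment $\sigma\mapsto a_\sigma$ is a group homomorphism $\lambda\colon G_\Q\to(\Z/n\Z)^\times$, namely the mod-$n$ character cut out by the isogeny.

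Next I would identify $\ker\lambda$: by definition $\sigma\in\ker\lambda$ if and only if $\sigma(P)=P$, so $\ker\lambda=\Gal[\Qbar][\Q(P)]$. As the kernel of a homomorphism, $\ker\lambda$ is normal in $G_\Q$, hence $\Q(P)/\Q$ is Galois and $\lambda$ descends to an injective homomorphism $\Gal[\Q(P)][\Q]\hookrightarrow(\Z/n\Z)^\times$. Consequently $[\Q(P):\Q]=\bigl|\Gal[\Q(P)][\Q]\bigr|$ divides $\bigl|(\Z/n\Z)^\times\bigr|=\phi(n)$, which is the claim.

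I do not expect a genuine obstacle here. The only point worth flagging is that one cannot simply quote Lemma \ref{lem:sloj_tocke} directly, because $E(\Q(P))[n]$ need not be cyclic — for instance when $n=2$ and $E[2]\subseteq E(\Q)$, in which case $\Q(P)=\Q$ but $E(\Q(P))[2]\simeq\Z/2\Z\oplus\Z/2\Z$. Working with the stabiliser of the single point $P$, as above, avoids this hypothesis entirely.
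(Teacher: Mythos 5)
Your proof is correct, and it in fact fills a small gap in the paper's one-line derivation. The paper simply cites Lemma \ref{lem:sloj_tocke}, but that lemma assumes both that $\Q(P)/\Q$ is Galois and that $E(\Q(P))[n]\simeq\Z/n\Z$; the paper verifies neither, and as you observe the second hypothesis can genuinely fail (your example with $n=2$ and $E[2]\subseteq E(\Q)$ is correct, and the same happens whenever $E[m]\subseteq E(\Q(P))$ for some divisor $m>1$ of $n$). Your direct construction of the isogeny character $\lambda\colon\Gal[\Qbar][\Q]\to(\Z/n\Z)^\times$ from the Galois-stability of $\langle P\rangle$, with kernel equal to $\Gal[\Qbar][\Q(P)]$, is the same mechanism that underlies Lemma \ref{lem:sloj_tocke}, but applying it to the stabiliser of the single point $P$ rather than to the group $E(\Q(P))[n]$ bypasses the cyclicity hypothesis and at the same time shows $\Q(P)/\Q$ to be Galois. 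So the two arguments coincide in spirit, but yours is self-contained and remains valid in the degenerate cases that the paper's bare appeal to the lemma does not strictly cover.
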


\begin{proof}
This follows directly from Lemma \ref{lem:sloj_tocke}, since the group $(\Z/n\Z)^\times$ has exactly $\phi(n)$ elements.
\end{proof}

\begin{lemma}\label{lem:nad_Kpet_nema_11}
Let $E /\Q$ be an elliptic curve. Then
\[E(\Kpet)[11^\infty] = \{0\}.\]
\end{lemma}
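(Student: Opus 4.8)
The plan is to show that $\Kpet$ contains no point of order $11$; since there are no points of order $121$ by Corollary~\ref{kor:ubij_torziju}, this suffices. Suppose for contradiction that $P \in E(\Kpet)$ has order $11$. Any such point is defined over a finite subextension $\F/\Q$ with $\F \subseteq \Kpet$, hence $\Q(P)$ is an abelian extension of $\Q$ whose degree is a product of primes $\geq 5$. First I would invoke Theorem~\ref{teo:rast_torzije}: for $p = 11$ the possible values of $[\Q(P):\Q]$ are $5, 10, 20, 40, 55, 80, 100, 110, 120$. Intersecting this with the constraint that the degree has all prime factors $\geq 5$ (so in particular is not divisible by $2$ or $3$) leaves only $[\Q(P):\Q] \in \{5, 55\}$.

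Next I would rule out both remaining cases. If $[\Q(P):\Q] = 55$, then $5 \mid [\Q(P):\Q]$ and $11 \mid [\Q(P):\Q]$; but an abelian subfield of $\Kpet$ of degree divisible by $11$ would have to lie inside a field ramified at $11$, namely it would force a $\Z_{11}$-layer to appear, contradicting that $\Kpet$ is the compositum of $\Z_p$-extensions only for $p \geq 5$ — more precisely, $\Gal[\Kpet][\Q] \simeq \prod_{p \geq 5}\Z_p$ is torsion-free, so it has no quotient of order $55$ (or any finite quotient with a factor of $11$ that is not itself a power of $11$); in fact the only finite quotients are products of cyclic groups of prime-power order $p^k$ with $p \geq 5$. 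So $55 = 5 \cdot 11$ is impossible, and we are reduced to $[\Q(P):\Q] = 5$, i.e. $\Q(P) \subseteq \Q[1][\ell]$ for some prime $\ell \geq 5$ (by Corollary~\ref{kojepolje}, since $\Q(P)/\Q$ is cyclic of degree $5$ and $\Q(P) \subseteq \Kpet$, it must lie in a single $\Z_p$-tower, and degree $5$ forces $\ell = 5$, so $\Q(P) = \Q[1][5]$, the degree-$5$ subfield of $\Q(\zeta_{25})$).

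Finally I would eliminate the case $\Q(P) \subseteq \Q[1][5]$. Here $\langle P \rangle$ is a $\Gal[\Q(P)][\Q]$-stable subgroup of order $11$ — indeed, since $E$ is defined over $\Q$ and $\Q(P)/\Q$ is Galois, the Galois group permutes the points of order $11$, and as $P$ generates a cyclic group fixed setwise (its conjugates all lie in $E(\Q(P))$ and have order $11$, and by Corollary~\ref{cor-weil} or a direct counting argument using that $E(\Q(P))[11] \simeq \Z/11\Z$), the line $\langle P\rangle$ is defined over $\Q$. This gives $E/\Q$ a rational $11$-isogeny. By Corollary~\ref{kor:izogenija_stupanj}, $[\Q(P):\Q] \mid \phi(11) = 10$; but $5 \mid 10$, so this does not immediately contradict anything. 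The sharper input is that a rational $11$-isogeny together with a point of order $11$ defined over a field of degree $5$ (an odd degree, strictly smaller than $10$) is incompatible with the known structure of the mod-$11$ image for curves over $\Q$ with an $11$-isogeny: such curves are classified (the modular curve $X_0(11)$ has finitely many rational points, the curves $121a$, $121b$, $121c$), and for each the field of definition of a point of order $11$ in the kernel has degree exactly $5$ only for specific curves, which I would check directly have no point of order $11$ over $\Q[1][5]$ — or, cleaner, use Theorem~\ref{l-tors} with $p = 5$, which allows $q = 11$, so this does not close it either; hence the genuinely decisive step is to argue that $\Q[1][5]$ is ramified only at $5$, whereas by the theory of the mod-$11$ representation a point of order $11$ forces ramification at $11$ (or at a prime of bad reduction), and $E$ can be chosen with good reduction away from appropriate primes — I would instead follow the template of the proof of \cite[Lemma 3.6]{CDKN}, combining the isogeny classification with the precise determination of which number fields of degree $5$ can serve as $\Q(P)$.

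\textbf{Main obstacle.} The hard part will be the last step: showing no point of order $11$ lives over the degree-$5$ field $\Q[1][5]$. The degree bound alone ($5 \mid 10$) is not a contradiction, so one must use finer information — either the explicit classification of elliptic curves over $\Q$ with an $11$-isogeny and a computation of the corresponding mod-$11$ image, or a ramification argument exploiting that $\Q[1][5]/\Q$ is ramified only at $5$ while the $11$-division field carries ramification at $11$. I expect the cleanest route is to mirror the argument in \cite[Lemma 3.6]{CDKN} used there for $q = 5$, adapting it to $q = 11$.
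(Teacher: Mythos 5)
Your overall skeleton matches the paper's: rule out order $121$ via Corollary~\ref{kor:ubij_torziju}, restrict $[\Q(P):\Q]$ to $\{5,55\}$ via Theorem~\ref{teo:rast_torzije} and the constraint $\Q(P)\subseteq\Kpet$, reduce to degree $5$ and to $\Q(P)=\Q[1][5]$, and finish by the argument from \cite{CDKN}. But there is a genuine error in the middle step, and the final step is an acknowledged gap.

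The argument you give to eliminate $[\Q(P):\Q]=55$ is wrong. You claim $\Gal[\Kpet][\Q]\simeq\prod_{p\geq 5}\Z_p$ has no quotient of order $55$, and that a degree divisible by $11$ would ``force a $\Z_{11}$-layer to appear, contradicting that $\Kpet$ is the compositum of $\Z_p$-extensions only for $p\geq 5$.'' But $11\geq 5$, so $\Z_{11}$ is one of the factors in the product and the $\Z_{11}$-layer $\Q[1][11]$ does lie in $\Kpet$. In fact $\prod_{p\geq 5}\Z_p$ surjects onto $\Z/5\Z\times\Z/11\Z\simeq\Z/55\Z$, so the quotient of order $55$ certainly exists; the sentence ``the only finite quotients are products of cyclic groups of prime-power order $p^k$ with $p\geq 5$'' even describes $\Z/55\Z$ as such a product, so the conclusion ``$55$ is impossible'' is a non sequitur. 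The paper instead eliminates $55$ exactly by the ingredient you invoke only later: Lemma~\ref{lem-j-k_isog} gives a rational $11$-isogeny with kernel $\langle P\rangle$ (since $E[11]\not\subseteq E(\Kpet)$ by Corollary~\ref{cor-weil}-type reasoning), so Corollary~\ref{kor:izogenija_stupanj} gives $[\Q(P):\Q]\mid\phi(11)=10$, and $55\nmid 10$. You state this divisibility but misread it as ``not immediately a contradiction'' when in fact it is precisely the step that kills $55$ and forces the degree to be $5$.

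For the remaining case $\Q(P)=\Q[1][5]$, you correctly identify the obstacle but do not close it. The paper simply cites \cite[Lemma 3.2]{CDKN} (not Lemma 3.6, which is the $5$-torsion lemma), and the content there is essentially what the present paper does in Lemma~\ref{lem:lagani_prosti}: a rational $11$-isogeny forces $E$ to have one of three $j$-invariants (the curves $121a1$, $121b1$, $121c1$), for each of which the $11$th division polynomial has a unique irreducible factor of degree $\leq 10$, of degree $5$, and a direct computation shows that factor has no root in $\Q[1][5]$. Your proposed route through ramification or the modular-curve classification is in the right spirit, but as written it is not a proof; the decisive step is the explicit check on the division polynomials.
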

\begin{proof}
By the Corollary \ref{kor:ubij_torziju} we see that $E(\Kpet)$ does not contain a point of order $121$. Therefore, it remains to show that $E(\Kpet)[11] = \{0\}$. Assume that $P \in E(\Kpet)$, where $P$ is a point of order $11$. Combining the Theorem \ref{teo:rast_torzije} with the fact that $\Q \subseteq \Q(P) \subseteq \Kpet$, we see that $[\Q(P) : \Q] \in \{5, 55\}$. Using Corollary \ref{kor:izogenija_stupanj} we see that $[\Q(P) : \Q] = 5$. But this implies that $\Q(P) = \Q[1][5]$. The rest of the proof is the same as the proof of \cite[Lemma 3.2.]{CDKN}.
\end{proof}
\section{Proof of Theorem \ref{teo:na_2_i_3_isto_kao_i_ranije}}
Proving this theorem will require a little more work. Namely, the approach used in the Theorem \ref{teo:nad_5_nista} will not help us in this case, since finite subextension $\Q \subseteq \F \subseteq \KK$ can be of arbitrary degree over $\Q$.
First we shall mention the main result from the paper \cite[Theorem 1.2.]{chou}, whose author is Michael Chou.
\begin{tm}\label{teo:chou_max_abel}
Let $E/\Q$ be an elliptic curve and let $\Q^\text{ab}$ be a maximal Abelian extension of $\Q$. The group $E(\Q^\text{ab})_\tors$ is isomorphic to one of the following groups:
\begin{align*}
\Z/N_1\Z, &\qquad N_1 = 1, 3, 5, 7, 9, 11, 13, 15, 17, 19, 21, 25, 27, 37, 43, 67, 163, \\
\Z/2\Z \oplus \Z/2N_2\Z, &\qquad N_2 = 1, 2, 3, 4, 5, 6, 7, 8, 9, \\
\Z/3\Z \oplus \Z/3N_3\Z, &\qquad N_3 = 1, 3, \\
\Z/4\Z \oplus \Z/4N_4\Z, &\qquad N_4 = 1, 2, 3, 4, \\
\Z/5\Z \oplus \Z/5\Z, \\
\Z/6\Z \oplus \Z/6\Z, \\
\Z/8\Z \oplus \Z/8\Z.
\end{align*}
For every group $G$ in the list above, there exists an elliptic curve $E/\Q$ such that $E(\Q^\text{ab})_\tors \simeq G$.
\end{tm}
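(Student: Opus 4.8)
The statement to establish is Theorem~\ref{teo:chou_max_abel}, Chou's classification of $E(\Q^\text{ab})_\tors$ for $E/\Q$. Since this is quoted from \cite{chou}, the natural thing is to follow the structure of that argument, and the plan below is essentially a reconstruction of it. The key point is that $\Q^\text{ab}$ is the compositum of all cyclotomic fields $\Q(\zeta_n)$, so by the Kronecker--Weber theorem $E(\Q^\text{ab})_\tors$ is exactly the union of $E(\Q(\zeta_n))_\tors$ over all $n$. One first reduces to understanding, for each prime $\ell$, the $\ell$-primary part $E(\Q^\text{ab})[\ell^\infty]$, and then reassembles. The reduction step is: bound which primes $\ell$ can divide $|E(\Q^\text{ab})_\tors|$ at all, and for those $\ell$ bound the exponent and rank of the $\ell$-part.

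The plan is as follows. First I would show that $E(\Q^\text{ab})[\ell^\infty]$ is finite for every $\ell$ and that only finitely many $\ell$ occur, using the theory of the mod-$\ell$ and $\ell$-adic Galois representations: if $E[\ell^\infty]\subseteq E(\Q^\text{ab})$ grows without bound, the image of $\mathrm{Gal}(\overline\Q/\Q)$ acting on the Tate module would have to be abelian on a large quotient, which contradicts the classification of abelian subgroups of $\mathrm{GL}_2(\Z_\ell)$ (an abelian subgroup that surjects onto $\mathrm{GL}_2(\Z/\ell)$ cannot exist for $\ell\ge 3$, and one handles $\ell=2$ separately), together with Serre's open image theorem for non-CM curves and explicit CM analysis. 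Next, for each surviving $\ell$ I would determine the possible structure of $E(\Q^\text{ab})[\ell^\infty]$ as a $\mathrm{Gal}$-module: the fact that $\Q(\zeta_n)/\Q$ is abelian forces the image of Galois on $E[\ell^k]$ (for the part that becomes rational) to be abelian, so by the theory of abelian subgroups of $\mathrm{GL}_2(\Z/\ell^k)$ one is reduced to a short list of diagonal/Borel-type possibilities; combined with Weil pairing (Proposition~\ref{pro:weil}), which says $E[m]\subseteq E(\Q^\text{ab})$ is allowed since $\Q(\zeta_m)\subseteq\Q^\text{ab}$, one gets that full $m$-torsion is possible exactly for small $m$, and the mixed cases $\Z/a\Z\oplus\Z/ab\Z$ are governed by rational isogenies via Lemma~\ref{lem-j-k_isog} and the isogeny bound of Theorem~\ref{teo:mazur_izogenije}.

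Concretely, the finitely many cases are assembled as follows: the cyclic possibilities $\Z/N_1\Z$ are exactly the $N$ for which $E$ admits a rational $N$-isogeny (so $N_1$ ranges over the Mazur--Kenku list of Theorem~\ref{teo:mazur_izogenije}, intersected with those that can actually be realized rationally over $\Q$) together with a point generating the kernel being defined over an abelian field, which Corollary~\ref{kor:izogenija_stupanj} guarantees since $[\Q(P):\Q]\mid\phi(N)$ and $\Q(\zeta_{\phi(N)})$-type fields sit inside $\Q^\text{ab}$; the groups with a $\Z/2\Z$, $\Z/3\Z$, $\Z/4\Z$, $\Z/5\Z$, $\Z/6\Z$, $\Z/8\Z$ factor come from $E[m]\subseteq E(\Q^\text{ab})$ forcing $\Q(\zeta_m)\subseteq\Q^\text{ab}$ (automatic) plus an additional isogeny constraint bounding the second factor. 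One then checks each candidate is realized by exhibiting an explicit curve (LMFDB references, as in the remark following the theorem), which makes the list sharp.

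The main obstacle I expect is the careful case analysis for $\ell=2$ and $\ell=3$, where the abelian-image constraint is weakest and the largest groups ($\Z/8\Z\oplus\Z/8\Z$, $\Z/4\Z\oplus\Z/16\Z$, $\Z/3\Z\oplus\Z/9\Z$) appear: here one cannot simply invoke $\mathrm{GL}_2$-rigidity and must instead trace through the structure of the $2$-adic and $3$-adic images of $\mathrm{Gal}(\overline\Q/\Q)$ (using the known classification of such images, e.g. the Rouse--Zureick-Brown and Sutherland--Zywina databases, or Chou's own direct argument), determine the maximal abelian subextension of the division field that can be cut out, and match it against the corresponding cyclotomic field. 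The CM curves also require separate treatment throughout, since their images are non-open and a priori could give larger torsion; showing they do not exceed the list is a nontrivial but finite check. Everything else — the large-$\ell$ vanishing and the reassembly — is comparatively routine given the tools already cited in the excerpt.
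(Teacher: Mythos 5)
The paper does not prove this statement at all: Theorem~\ref{teo:chou_max_abel} is quoted verbatim from Chou \cite{chou} and used as a black box, so there is no internal proof to compare yours against. Judged on its own terms, your sketch does follow the broad lines of Chou's actual argument (abelian division fields --- essentially what Theorem~\ref{teo:gonloz_abel} encodes --- rational isogenies via Lemma~\ref{lem-j-k_isog} and the Mazur--Kenku bound of Theorem~\ref{teo:mazur_izogenije}, a separate $2$-adic/$3$-adic and CM analysis, and explicit realizations), but it is an outline rather than a proof, and one step is wrong as stated: Corollary~\ref{kor:izogenija_stupanj} only says $[\Q(P):\Q]$ divides $\phi(N)$, and a degree bound does not place $\Q(P)$ inside $\Q^{\text{ab}}$ (a non-normal cubic field has degree dividing $\phi(7)$ and lies in no abelian extension). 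The argument actually needed is that Galois acts on the cyclic kernel $\diam{P}$ of a rational $N$-isogeny through a character into $(\Z/N\Z)^\times$, so $\Q(P)/\Q$ is automatically abelian; the degree divisibility is a consequence, not a substitute.

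More substantively, the outline cannot by itself produce the precise list. Nothing in it explains why the cyclic groups are exactly the \emph{odd} isogeny degrees: this needs the observation that $2$-torsion over $\Q^{\text{ab}}$ is all-or-nothing (a single point of order $2$ in $\Q^{\text{ab}}$ forces $E[2]\subseteq E(\Q^{\text{ab}})$, since otherwise its field of definition is a non-normal cubic), which is why every even-order group in the list contains $\Z/2\Z\oplus\Z/2\Z$. Likewise, Lemma~\ref{lem-j-k_isog} plus Mazur--Kenku alone would still permit, e.g., $\Z/3\Z\oplus\Z/27\Z$ (rational $9$-isogeny), $\Z/4\Z\oplus\Z/32\Z$, or cross-prime combinations such as $\Z/2\Z\oplus\Z/22\Z$ (rational $11$-isogeny together with full $2$-torsion); excluding these is exactly the detailed mod-$2^k$/mod-$3^k$ image analysis, the prime-by-prime compatibility checks, and the CM verifications that you defer to ``case analysis'' and external databases. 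As it stands, the proposal is a reasonable reading plan for Chou's paper, not a proof of the theorem.
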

Notice that $E(\KK)$ does not contain a full $\Z/n\Z \oplus \Z/n\Z$ torsion for any $n > 2$. We also know that $\KK \subseteq \Q^\text{ab}$. Combining these two facts and the Theorem \ref{teo:chou_max_abel}, we conclude that the group $E(\KK)_\tors$ is isomorphic to one of the following groups:
\begin{align*}
\Z/n\Z, &\quad 1 \leq n \leq 19 \text{ or } n \in \{21, 25, 27, 37, 43, 67, 163\}, \\
\Z/2\Z \oplus \Z/2n\Z, &\quad 1 \leq n \leq 9.
\end{align*}
Now we see that in order to prove the Theorem \ref{teo:na_2_i_3_isto_kao_i_ranije}, we have to show that $E(\KK)_\tors$ does not contain points of order 
\[11,\ 14,\ 15,\ 16,\ 17,\ 18,\ 19,\ 25,\ 37,\ 43,\ 67,\ 163,\]
and that if $E(\KK)_\tors$ contains a point of order $10$ or $12$, then $E(\KK)_\tors \simeq \Z/10\Z$ or $E(\KK)_\tors \simeq \Z/12\Z$, respectively. We shall prove this with a series of Lemmas. Finally, using the Lemma \ref{lem:kompozitum_13} we show that there exist elliptic curves $E/\Q$ that contain a point of order $13$ defned over $\KK$.
\begin{lemma}\label{lem:lagani_prosti}
Let $E /\Q$ be an elliptic curve and let $p \in \{11, 19, 37, 43, 67, 163\}$. Then
\[E(\KK)[p] = \{0\}.\]
\end{lemma}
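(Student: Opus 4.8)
The plan is to handle each prime $p \in \{11, 19, 37, 43, 67, 163\}$ by bounding the degree $[\Q(P):\Q]$ of a hypothetical point $P$ of order $p$ lying in $E(\KK)$, and showing this degree is incompatible with $\Q(P)$ being a subfield of $\KK$. Recall that $\Gal[\KK][\Q] \simeq \prod_{\ell \text{ prime}} \Z_\ell$, so any finite subextension $\F \subseteq \KK$ has degree whose prime factorization involves each prime $\ell$ only through the factor $\Z_\ell$; more usefully, $\Gal[\KK][\Q]$ is abelian and pro-cyclic in each factor, so $\Gal[\F][\Q]$ is abelian and each of its Sylow subgroups is cyclic. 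In particular $\Q(P)/\Q$ is abelian with cyclic Sylow subgroups.

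First I would invoke Theorem \ref{teo:rast_torzije} to list the possible values of $[\Q(P):\Q]$ for each $p$. For $p \in \{19, 43, 67, 163\}$, the relevant degrees are among $\frac{p-1}{2}, p-1, \frac{p(p-1)}{2}, p(p-1)$ (together with $p^2-1$), and for $p = 37$ the list is $12, 36, 72, 444, 1296, 1332, 1368$; for $p = 11$ the list is $5, 10, 20, 40, 55, 80, 100, 110, 120$. Next, since $E$ over $\Q$ has a point of order $p$ defined over an abelian extension, the image of the mod-$p$ Galois representation lies in a Borel subgroup (the line $\langle P\rangle$ is Galois-stable), so $E$ admits a rational $p$-isogeny with kernel $\langle P\rangle$; by Corollary \ref{kor:izogenija_stupanj} this forces $[\Q(P):\Q] \mid \phi(p) = p-1$. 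Combined with the degree lists, this kills all the ``large'' options: for $p=11$ we are left with $[\Q(P):\Q] = 5$ or $10$, for $p \in \{19,43,67,163\}$ with a divisor of $p-1$ that must also be one of $\frac{p-1}{2}$ or $p-1$, and for $p = 37$ with $12$ or $36$.

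The remaining work is to eliminate these small cases. Here I would use the abelian-with-cyclic-Sylow structure: $\Q(P) \subseteq \KK$ means $\Q(P)$ is a compositum of subfields of the individual $\Z_\ell$-extensions $\Q[\ell]$, so $\Gal[\Q(P)][\Q]$ embeds into $\prod_\ell \Z_\ell$, forcing every Sylow subgroup to be cyclic. For the surviving degrees — e.g.\ $\frac{p-1}{2}$ and $p-1$ for $p = 19, 43, 67$ (and for $163$, where $162 = 2 \cdot 81$, the degree $162$ has cyclic Sylow-$3$ of order $81$, which is fine, but I must check this field actually sits in $\KK$) — I would argue via Lemma \ref{lem:sloj_tocke}: since $E(\Q(P))[p] \simeq \Z/p\Z$, $\Gal[\Q(P)][\Q]$ embeds in $(\Z/p\Z)^\times$, which is cyclic of order $p-1$, giving no new restriction, so I instead pin down $\Q(P)$ explicitly as a specific subfield of $\Q(\zeta_p)$ and observe that a subfield of $\Q(\zeta_p)$ of degree $> 1$ lies in $\KK$ only if it is contained in $\Q[\ell]$ for the appropriate $\ell$, i.e.\ only if it is unramified outside $\ell$ and totally ramified at $\ell$ — but $\Q(\zeta_p)$ is ramified only at $p$, so any nontrivial subfield is ramified at $p$ and can lie in $\KK$ only inside $\Q[p]$, whereas $\Q[p] \cap \Q(\zeta_p) = \Q$ (the $\Z_p$-extension is inside $\Q(\zeta_{p^\infty})^\Delta$, disjoint from the degree-$(p-1)$ part). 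This contradiction rules out every nontrivial degree, so $[\Q(P):\Q] = 1$, i.e.\ $P \in E(\Q)$; but then $E$ has a rational point of order $p \geq 11$ with $p \notin \{\text{Mazur's list}\}$, contradiction — hence no such $P$ exists. For $p = 37$ with degrees $12$ or $36$, the same ramification argument applies since a degree-$12$ or degree-$36$ subfield of $\Q(\zeta_{37})$ is ramified at $37$ and cannot embed in $\KK$.

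\medskip

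\noindent\emph{Main obstacle.} The delicate point is the case $p = 11$ with $[\Q(P):\Q] \in \{5, 10\}$: here $\Q(P)$ could a priori be the degree-$5$ subfield of $\Q(\zeta_{11})$, which is \emph{not} in $\KK$ by the ramification argument — but one must also consider that $\Q(P)$ need not equal a subfield of $\Q(\zeta_{11})$ at all (Lemma \ref{lem:sloj_tocke} only gives an embedding of the Galois group into $(\Z/11\Z)^\times$, not an identification of fields when the line $\langle P\rangle$ and its quotient behave differently). The clean way out, mirroring \cite[Lemma 3.2]{CDKN} and Lemma \ref{lem:nad_Kpet_nema_11}: the degree-$5$ field in $\KK$ must be $\Q[1][5]$, and one checks — via the isogeny character analysis of the mod-$11$ representation over $\Q[1][5]$, exactly as in \cite{CDKN} — that $E$ cannot acquire an $11$-torsion point there. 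I expect this last step to be the one requiring genuine care rather than the degree bookkeeping, which is routine.
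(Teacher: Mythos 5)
Your degree bookkeeping in the first half is fine and matches the paper's opening moves: full $p$-torsion over $\KK$ is impossible by the Weil pairing, Lemma \ref{lem-j-k_isog} then gives a rational $p$-isogeny with kernel $\langle P\rangle$, and Corollary \ref{kor:izogenija_stupanj} forces $[\Q(P):\Q]\mid p-1$. The gap is in the second half. You propose to ``pin down $\Q(P)$ explicitly as a specific subfield of $\Q(\zeta_p)$'' and then run a ramification argument: a nontrivial subfield of $\Q(\zeta_p)$ is ramified only at $p$, while a nontrivial subfield of $\KK$ of degree dividing $p-1$ must be ramified at some prime $\ell\neq p$, so the two can meet only in $\Q$. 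That argument is valid given its hypothesis, but the hypothesis $\Q(P)\subseteq\Q(\zeta_p)$ is never established --- and it is precisely what a proof is needed for. Lemma \ref{lem:sloj_tocke} is a group-theoretic statement, that $\Gal[\Q(P)][\Q]$ embeds into the cyclic group $(\Z/p\Z)^\times$; it does not identify $\Q(P)$ with a subfield of $\Q(\zeta_p)$. The isogeny character cutting out $\Q(P)$ need not be a power of the mod-$p$ cyclotomic character and can be ramified at primes of bad reduction away from $p$, so there is no a priori obstruction to $\Q(P)$ actually sitting inside $\KK$. You yourself identify exactly this issue in the ``Main obstacle'' paragraph for $p=11$ (``$\Q(P)$ need not equal a subfield of $\Q(\zeta_{11})$ at all''), but the same objection applies verbatim to $p\in\{19,37,43,67,163\}$, where you nonetheless assert the containment without comment; if the containment held automatically, your $p=11$ worry would disappear too, so the argument is internally inconsistent.

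The paper closes the gap by a different, explicitly computational route. For each $p$ in the list, an elliptic curve over $\Q$ with a rational $p$-isogeny has $j$-invariant in a short explicit list (tabulated in \cite[Table 3]{loz}; see Table \ref{tablica:j_invarijante}). Since division polynomials depend on $j$ only up to a multiplicative constant, one computes the $p$th division polynomial for one curve per $j$-invariant, extracts the few irreducible factors of degree $\leq p-1$ (Table \ref{tablica:polinomcici}), and verifies in \verb|magma| that none has a root over the unique subfield of $\KK$ of the matching degree. For $p=163$ the relevant factor has degree $81$, and one uses instead that it would have to split into three degree-$27$ pieces over $\Q[1][3]=\Q(\zeta_9)^+$ but in fact remains irreducible there. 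This is the step your outline postpones to ``exactly as in \cite{CDKN}'' for $p=11$; it is in fact what is needed for every prime in the list, not only for $11$.
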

\begin{proof}
Assume that $P \in E(\KK)$ is a point of order $p$. Using Lemma \ref{lem-j-k_isog} we know that $E$ has a rational $p$-isogeny with kernel $\diam P$. Corollary \ref{kor:izogenija_stupanj} implies that \[[\Q(P) : \Q] \mid \phi(p) = p - 1.\] It is known that such elliptic curve has CM. By the \cite[Table 3]{loz} we can see all the possibilities for $j$-invariants of $E$. For the convenience of the reader, we list them in the table below.
\begin{table}[h!]
\centering
\begin{tabular}{|c|c|c|} \hline
$p$		& $j$-invariant										& Cremona label											\\ \hline
$11$	& $-11 \cdot 131^3$										& \href{http://www.lmfdb.org/EllipticCurve/Q/121a1}{121a1}		\\
		& $-2^{15}$												& \href{http://www.lmfdb.org/EllipticCurve/Q/121b1}{121b1}		\\
		& $-11^2$												& \href{http://www.lmfdb.org/EllipticCurve/Q/121c1}{121c1}		\\ \hline
$19$	& $-2^{15} \cdot 3^3$									& \href{http://www.lmfdb.org/EllipticCurve/Q/361a1}{361a1}		\\ \hline
$37$	& $-7 \cdot 11^3$										& \href{http://www.lmfdb.org/EllipticCurve/Q/1225h1}{1225h1}	\\
		& $-7 \cdot 137^3 \cdot 2083^3$							& \href{http://www.lmfdb.org/EllipticCurve/Q/1225h2}{1225h2}	\\ \hline
$43$	& $-2^{18} \cdot 3^3 \cdot 5^3$ 						& \href{http://www.lmfdb.org/EllipticCurve/Q/1849a1}{1849a1}	\\ \hline
$67$	& $-2^{15} \cdot 3^3 \cdot 5^3 \cdot 11^3$				& \href{http://www.lmfdb.org/EllipticCurve/Q/4489a1}{4489a1}	\\ \hline
$163$	& $-2^{18} \cdot 3^3 \cdot 5^3 \cdot 23^3 \cdot 29^3$	& \href{http://www.lmfdb.org/EllipticCurve/Q/26569a1}{26569a1}	\\ \hline
\end{tabular}
\caption{Elliptic curves with $j$-invariant $j$ with minimal conductor \cite{lmfdb}}
\label{tablica:j_invarijante}
\end{table}
It is known that the division polynomials of elliptic curves with same $j$-invariant differ by a multiplicative constant. Therefore, the $p^{\text{th}}$ division polynomial of $E$ is the same (up to multiplication by a constant) as the $p^{\text{th}}$ division polynomial of some elliptic curve from the Table \ref{tablica:j_invarijante}.

Using \verb|magma| \cite{magma} we can easily calculate the $p^{\text{th}}$ division polynomial of elliptic curves given in the Table \ref{tablica:j_invarijante}. Since $[\Q(P) : \Q] \leq p - 1$, we are searching for the irreducible factors of these polynomials that are of degree $\leq p-1$ and we can easily see that they do not have zeroes over $\KK$. Namely, each zero of an irreducible (over $\Q$) polynomial of degree $d$ is defined over some degree $d$ number field. The degree $d$ number field contained in $\KK$ is unique and we can easily find it using \verb|magma| \cite{magma}. It remains to check that the polynomial does not have zeroes defined over that field which completes the proof in all cases except when $p=163$. Namely, in all other cases, \verb|magma| \cite{magma} computes what we need in a matter of seconds. The case when $p=163$ requires slightly different approach. As we can see in the Table \ref{tablica:polinomcici}, the only irreducible factor (call it $\varphi$) of the $163$\textsuperscript{rd} division polynomial whose degree is less then or equal to $163$ has a degree $81$. If we assume that $E(\KK)$ contains a point of order $163$, then we know that it's $x$-coordinate must be defined over a field of degree $81$, which is $\Q[4][3]$. But $\Q[4][3] / \Q$ is Galois extension and we know that over $\Q[1][3] = \Q(\zeta_9)^+$, $\varphi$ splits into a product of three irreducible factors, each having a degree $27$. Computation in \verb|magma| \cite{magma} shows that this is not the case. Namely, the polynomial $\varphi$ remains irreducible over $\Q(\zeta_9)^+$, which leads to a contradiction.
\end{proof}
\begin{table}[ht]
\centering
\begin{tabular}{|c|c|c|c|} \hline
$p$		& Elliptic curve										& $\deg \psi_p$	& irreducible factors of degree $\leq p - 1$	\\ \hline
$11$	& \href{http://www.lmfdb.org/EllipticCurve/Q/121a1}{121a1}		& $60$			& one of degree $5$ 						\\
		& \href{http://www.lmfdb.org/EllipticCurve/Q/121b1}{121b1}		& $60$			& one of degree $5$							\\
		& \href{http://www.lmfdb.org/EllipticCurve/Q/121c1}{121c1}		& $60$			& one of degree $5$							\\ \hline
$19$	& \href{http://www.lmfdb.org/EllipticCurve/Q/361a1}{361a1}		& $180$			& one of degree $9$ 						\\ \hline
$37$	& \href{http://www.lmfdb.org/EllipticCurve/Q/1225h1}{1225h1}	& $684$			& three of degree $6$							\\
		& \href{http://www.lmfdb.org/EllipticCurve/Q/1225h2}{1225h2}	& $684$			& one of degree $18$						\\ \hline
$43$	& \href{http://www.lmfdb.org/EllipticCurve/Q/1849a1}{1849a1}	& $924$			& one of degree $21$						\\ \hline
$67$	& \href{http://www.lmfdb.org/EllipticCurve/Q/4489a1}{4489a1}	& $2244$		& one of degree $33$						\\ \hline
$163$	& \href{http://www.lmfdb.org/EllipticCurve/Q/26569a1}{26569a1}	& $13284$		& one of degree $81$						\\ \hline
\end{tabular}
\caption{Possible degrees of irreducible factors of degree $\leq p - 1$}
\label{tablica:polinomcici}
\end{table}

\begin{proposition}{\cite[Proposition 4.6.]{gn}}\label{pro:gonzalo_najman}
Let $F$ be a number field and $E/F$ be an elliptic curve. Let $p$ be a prime number, $n \in \N$ and $P \in E(\overline{\F})$ a point of order $p^{n+1}$. Then $[\F(P) : \F(pP)]$ divides $p^2$ or $(p-1)p$.
\end{proposition}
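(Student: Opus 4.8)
The plan is to bound $d:=[F(P):F(pP)]$ in two essentially independent ways and then combine them. Put $Q:=pP$, a point of exact order $p^{n}$; since $n\geq 1$ the point $v_{0}:=p^{n}P=p^{\,n-1}Q$ lies in $F(Q)$ and is nonzero. Rather than work with the possibly non‑Galois extension $F(P)/F(Q)$ directly, I would study the action of $G:=\Gal[\Qbar][F(Q)]$ on the fibre $\Sigma:=\{P'\in E(\Qbar):pP'=Q\}$, which equals $P+E[p]$ and hence has $p^{2}$ elements; $G$ preserves $\Sigma$ because $\sigma(Q)=Q$ for every $\sigma\in G$. The conjugates of $P$ over $F(Q)$ form the orbit $G\cdot P\subseteq\Sigma$, so we get at once the crude bound $d\leq p^{2}$; this is one of the two ingredients.

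For the second ingredient I would use the linear algebra of the mod‑$p$ representation $\rho\colon G\to\Aut(E[p])\cong\GL_{2}(\F_{p})$. Since every $\sigma\in G$ fixes the nonzero vector $v_{0}$, the image $H:=\rho(G)$ is contained in the stabilizer of $v_{0}$ in $\GL_{2}(\F_{p})$, a subgroup of order $p(p-1)$; hence $v_{p}(|H|)\leq 1$ and the prime‑to‑$p$ part of $|H|$ divides $p-1$. In the coordinate $P+T\leftrightarrow T$ on $\Sigma$ the element $\sigma$ acts by the affine transformation $T\mapsto(\sigma(P)-P)+\rho(\sigma)T$, so $G$ acts through a subgroup $\Gamma$ of $E[p]\rtimes\mathrm{Stab}(v_{0})$. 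The linear‑part projection restricts on $\Gamma$ to a surjection $\rho\colon\Gamma\twoheadrightarrow H$ whose kernel is the group of translations $N:=\{(t,\mathrm{id})\in\Gamma\}$; as $N$ embeds into $(E[p],+)\cong(\Z/p)^{2}$ we get $|N|\mid p^{2}$, together with an exact sequence $1\to N\to\Gamma\to H\to 1$.

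It then remains to compute $d$ as an index and read off its $p$‑adic valuation. By orbit--stabilizer $d=[\Gamma:\Gamma_{P}]$, with $\Gamma_{P}$ the stabilizer of the base point $P$; a translation fixing $P$ is trivial, so $\Gamma_{P}\cap N=1$ and $\rho$ identifies $\Gamma_{P}$ with a subgroup of $H$. Therefore
\[
d=\frac{|\Gamma|}{|\Gamma_{P}|}=\frac{|N|\,|H|}{|\rho(\Gamma_{P})|}=|N|\cdot[H:\rho(\Gamma_{P})].
\]
Consequently $v_{p}(d)=v_{p}(|N|)+v_{p}\bigl([H:\rho(\Gamma_{P})]\bigr)\leq 2+1$, while the prime‑to‑$p$ part of $d$ divides that of $|H|$ and hence divides $p-1$. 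Feeding in $d\leq p^{2}$ finishes the proof: if $v_{p}(d)\leq 1$ then $d\mid p(p-1)$; if $v_{p}(d)=2$ then $p^{2}\mid d\leq p^{2}$ forces $d=p^{2}\mid p^{2}$; and $v_{p}(d)\geq 3$ is impossible.

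The step I expect to be the real obstacle is the combinatorial bookkeeping producing the displayed factorization $d=|N|\cdot[H:\rho(\Gamma_{P})]$, together with the realization that one must keep the two bounds ``$|N|$ is a $p$‑power of exponent $\leq 2$'' and ``$|H|$ divides $p(p-1)$'' separate and only afterwards intersect them with the inequality $d\leq p^{2}$: neither the affine embedding nor the estimate $d\leq p^{2}$ alone pins down the answer, but together they force $d$ into exactly one of the two permitted shapes. Everything else is routine: the structure of the mod‑$p$ representation, the order of a vector stabilizer in $\GL_{2}(\F_{p})$, and orbit--stabilizer counting.
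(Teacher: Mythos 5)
Your argument is correct. Setting $Q=pP$ and $v_0=p^nP$ (here $n\ge 1$ is essential, and you use it correctly: this is what gives a nonzero vector in $E[p]$ fixed by $G=\Gal[\Qbar][F(Q)]$), you embed the image $\Gamma$ of $G$ acting on the fibre $\Sigma=P+E[p]$ into $E[p]\rtimes\mathrm{Stab}(v_0)$, extract the exact sequence $1\to N\to\Gamma\to H\to 1$ with $|N|\mid p^2$ and $|H|\mid p(p-1)$, note $\Gamma_P\cap N=1$ so that $d=|N|\cdot[H:\rho(\Gamma_P)]$, and then intersect the resulting divisibility with the a priori bound $d\le p^2$. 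The case analysis at the end is airtight, and you are right that both ingredients are needed: the divisibility $d\mid p^3(p-1)$ alone, or the bound $d\le p^2$ alone, would not pin the answer to ``$p^2$ or $p(p-1)$.''

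This is essentially the same argument as in Gonz\'alez-Jim\'enez and Najman: their proof works with the cocycle $\sigma\mapsto\sigma(P)-P$ valued in $E[p]$, which is precisely the translation part of your affine action, and they likewise split off the subgroup acting trivially on $E[p]$ (your $N$) from the linear image in $\mathrm{Stab}(v_0)$ (your $H$) before combining with the orbit bound $\le p^2$. You have simply made the semidirect-product bookkeeping explicit with the orbit--stabilizer identity $d=[\Gamma:\Gamma_P]$, which is a clean way to organize it; there is no substantive difference in the approach and no gap.
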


\begin{lemma}\label{lem:kompozitum_16}
Let $E /\Q$ be an elliptic curve. Then $E(\KK)$ does not contain a point of order $16$.
\end{lemma}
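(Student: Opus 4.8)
The plan is to suppose that $P \in E(\KK)$ is a point of order $16$ and derive a contradiction by descending through the $2$-power tower. Set $Q = 2P$, a point of order $8$, and $R = 4P = 2Q$, of order $4$. Since every subfield of $\KK$ is abelian over $\Q$, each of the fields $\Q(R)$, $\Q(Q)$, $\Q(P)$ is abelian, hence Galois over $\Q$, and the extensions $\Q(Q)/\Q(R)$, $\Q(P)/\Q(Q)$ are Galois as well. The first step is to bound $\Q(R)$: a point of order $4$ over an abelian field forces, via the Weil pairing (Proposition \ref{pro:weil}) and Lemma \ref{lem:sloj_tocke}, that any full $2$-power torsion visible is very small, and in fact Theorem \ref{teo:rast_torzije} (the $p=2$ row) together with Proposition \ref{prop_pdiv} pins down how $\Q(Q)$ and $\Q(P)$ sit over $\Q(R)$: each step in the tower $\Q(R) \subseteq \Q(Q) \subseteq \Q(P)$ has degree dividing $4$, since by Proposition \ref{prop_pdiv} (applicable because $\Q(P)/\Q(2P)$ is Galois) $[\Q(P):\Q(Q)] \mid 4$ and similarly one level down.

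The key structural input is that a point of order $16$ produces, by Lemma \ref{lem-j-k_isog}, a rational cyclic isogeny of large $2$-power degree unless a correspondingly large chunk of $2$-power torsion is already rational over $\QQ$ — but $\QQ$ contains no full $E[2]$ forced beyond what $\Q(\zeta_{2^\infty})^{\Delta}$ allows, and in particular $\QQ$ contains only the roots of unity $\pm 1$, so $E(\QQ)[4] \not\supseteq \Z/2\Z \oplus \Z/2\Z$ by Proposition \ref{pro:weil}. Hence the largest $k$ with $E[2^k] \subseteq E(\QQ)$ is $0$ or $1$, and Lemma \ref{lem-j-k_isog} gives a rational $2^{4-k}$-isogeny, i.e. a rational $8$- or $16$-isogeny; either way $E$ has a rational $16$-isogeny or a rational $8$-isogeny. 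Combining this with Corollary \ref{kor:izogenija_stupanj} we would get $[\Q(P):\Q] \mid \phi(16) = 8$ when the kernel of a rational $16$-isogeny is $\diam P$; when only an $8$-isogeny is available, one instead tracks the order-$8$ point $Q$ and gets $[\Q(Q):\Q] \mid \phi(8) = 4$, so $\Q(Q)$ is one of the small $2$-power layers inside $\QQ$, namely contained in the compositum of $\Q(\zeta_{2^5})^\Delta = \Q_{3,2}$-type layers; in all cases $\Q(P)$ is forced into a specific field of degree dividing $16$ over $\Q$, which (being abelian, a subfield of $\QQ$, and of $2$-power degree) must lie in $\Q_{\infty,2}$. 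By Corollary \ref{kojepolje}, $\Q(P) \subseteq \Q_{m,2}$ with $m = v_2(\phi(16)) = 3$, i.e.\ $\Q(P) \subseteq \Q_{3,2}$.

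At this point the problem has been reduced to the single $\Z_2$-extension $\Q_{\infty,2}$, and we may invoke the classification already known there: by Theorem \ref{p2result}, $E(\Q_{\infty,2})_\tors$ contains no point of order $16$. This is the contradiction. The main obstacle I expect is the bookkeeping in the middle step — carefully ruling out the possibility that the extra $2$-power torsion is spread across several distinct $\Z_p$-extensions rather than living inside $\Q_{\infty,2}$ alone; this is handled by the observation that any point whose order is a power of $2$ must generate a $2$-power-degree abelian extension, and the only $2$-power-degree subfield of $\QQ$ of each given degree is the corresponding layer of $\Q_{\infty,2}$ (the other $\Z_p$-extensions contribute only degrees prime to $2$), so no genuinely new field is available beyond $\Q_{\infty,2}$. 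Once that is in place, the reduction to Theorem \ref{p2result} is immediate.
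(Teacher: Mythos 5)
Your strategy coincides with the paper's at the decisive step: show that $[\Q(P):\Q]$ is a power of $2$, note that the unique degree\nobreakdash-$2^m$ subfield of $\KK$ is $\Q[m][2]$, and conclude $P \in E(\Q[2])$, contradicting Theorem~\ref{p2result}. Where you diverge is in how the $2$\nobreakdash-power bound is anchored. You rule out $E[4] \subseteq E(\KK)$ via the Weil pairing, apply Lemma~\ref{lem-j-k_isog} to get a rational $8$- or $16$-isogeny, and bound the degree of the field of definition of a kernel generator by $\phi(8)$ or $\phi(16)$ via Corollary~\ref{kor:izogenija_stupanj}. The paper instead uses Lemma~\ref{lem-j-k_isog} only to get a rational $2$-isogeny, then invokes Siksek's result to obtain $[\Q(E[2]):\Q] = |G_{E,\Q}(2)| \leq |B(2)| = 2$ directly, hence $[\Q(8P):\Q] \in \{1,2\}$ at one stroke, and climbs the tower with three applications of Proposition~\ref{pro:gonzalo_najman}. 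Both routes reach the same conclusion, but the paper's is cleaner: it avoids your case split on $k$ and the need to identify the isogeny kernel, which in your $k=1$ case you tacitly take to be $\langle 2P \rangle$ without argument. Two smaller points to repair: the intermediate claim that $E(\KK)[4] \not\supseteq \Z/2\Z \oplus \Z/2\Z$ is false (full $2$-torsion over $\Q$ is perfectly possible); what Proposition~\ref{pro:weil} actually gives, since $\zeta_4 \notin \KK$, is $E[4] \not\subseteq E(\KK)$, which is what your next sentence uses. And Proposition~\ref{prop_pdiv} carries the hypothesis that $E(\Q(2P))$ has no point of order $16$; the unconditional degree bound you want is Proposition~\ref{pro:gonzalo_najman}, which is the one the paper cites.
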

\begin{proof}
Assume that $P \in E(\KK)$ is a point of order $16$. Lemma \ref{lem-j-k_isog} implies that $E$ has a rational $2$-isogeny. We also know that $E(\KK)_\tors$ is isomorphic to a subgroup of $\Z/2\Z \oplus \Z/16\Z$ by the Theorem \ref{teo:chou_max_abel}. By the \cite[Theorem 4]{siksek} we see that \[[\Q(E[2]) : \Q] = |G_{E,\Q}(2)| \leq |B(2)| = 2.\]
The point $8P$ is of order $2$ so we have $[\Q(8P) : \Q] \in \{1, 2\}$. By Proposition  \ref{pro:gonzalo_najman} we have \[[\Q(P) : \Q] = \underbrace{[\Q(P) : \Q(2P)]}_{\in \{1,2,4\}}\underbrace{[\Q(2P) : \Q(4P)]}_{\in \{1,2,4\}}\underbrace{[\Q(4P) : \Q(8P)]}_{\in \{1,2,4\}}\underbrace{[\Q(8P) : \Q]}_{\in \{1,2\}}.\] Therefore, $[\Q(P) : \Q]$ is a power of $2$ which means that $P$ is defined over $\Q[2]$, which contradicts the Theorem  \ref{p2result}.
\end{proof}
\begin{lemma}\label{lem:kompozitum_15_17}
Let $E /\Q$ be an elliptic curve and let $n \in \{15,17\}$. Then $E(\KK)$ does not contain a point of order $n$. 
\end{lemma}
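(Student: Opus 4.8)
The plan is to handle the two cases $n=15$ and $n=17$ separately, in both cases assuming for contradiction that $P \in E(\KK)$ is a point of order $n$. For $n=15$, the point $3P$ has order $5$ and the point $5P$ has order $3$. Since $15$ is not a prime power, Lemma \ref{lem-j-k_isog} (applied with the cyclic group $\langle P\rangle$, or rather splitting off the $3$- and $5$-parts) forces $E$ to admit a rational $15$-isogeny, or at least rational $3$- and $5$-isogenies whose kernels are $\langle 5P\rangle$ and $\langle 3P\rangle$. First I would invoke Corollary \ref{kor:izogenija_stupanj}: if $E$ has a rational $15$-isogeny with kernel $\langle P\rangle$ then $[\Q(P):\Q]\mid\phi(15)=8$, so $[\Q(P):\Q]$ is a power of $2$; but $\Q(P)\subseteq\KK$ and the only abelian $2$-power extensions inside $\KK$ sit inside $\Q[2]$, contradicting Theorem \ref{p2result} (which has no point of order $15$, indeed no point of order $5$ paired with $3$). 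If instead only separate $3$- and $5$-isogenies exist, I would look at the $5$-part: by Theorem \ref{teo:rast_torzije} the degree $[\Q(3P):\Q]$ lies in $\{1,2,4,5,8,10,16,20,24\}$, intersecting the divisors of $\phi(5)=4$ coming from Corollary \ref{kor:izogenija_stupanj} forces a $2$-power degree, so $3P\in E(\Q[2])$; similarly the $3$-part gives a point of order $3$ of $2$-power degree, hence in $E(\Q[2])$; combining, $P\in E(\Q[2])$, again contradicting Theorem \ref{p2result}.

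For $n=17$, I would argue as in Lemma \ref{lem:lagani_prosti}: a point $P$ of order $17$ in $E(\KK)$ forces, via Lemma \ref{lem-j-k_isog}, a rational $17$-isogeny with kernel $\langle P\rangle$, and then Corollary \ref{kor:izogenija_stupanj} gives $[\Q(P):\Q]\mid\phi(17)=16$, a power of $2$. Consulting Theorem \ref{teo:rast_torzije} case (2), the actual degrees $[\Q(P):\Q]$ for $p=17$ are among $8,16,32,136,256,272,288$; intersecting with divisors of $16$ leaves only $8$ or $16$. Either way $[\Q(P):\Q]$ is a power of $2$ dividing $16$, so $x(P)$ — and in fact $P$ itself after checking the field of definition of the point versus its $x$-coordinate — lives in the unique $2$-power subfield of $\KK$ of that degree, which is a subfield of $\Q[2]$. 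This contradicts Theorem \ref{p2result}, since $E(\Q[2])_\tors$ contains no point of order $17$.

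The main obstacle I anticipate is the bookkeeping distinction between $\Q(P)$ and $\Q(x(P))$: Corollary \ref{kor:izogenija_stupanj} and Theorem \ref{teo:rast_torzije} control $[\Q(P):\Q]$, but to conclude $P\in E(\Q[2])$ I need the field generated by the point (including its $y$-coordinate) to be a $2$-power extension, which it is once $[\Q(P):\Q]$ is, since $\Q(P)\subseteq\KK$ and $[\Q(P):\Q]$ is the relevant degree. A secondary subtlety in the $n=15$ case is justifying that $\langle 3P\rangle$ and $\langle 5P\rangle$ are the kernels of rational isogenies: this follows because $\langle P\rangle$ being Galois-stable (as $\Q(P)\subseteq\KK$ is abelian, hence the Galois action on $\langle P\rangle$ is by scalars) makes each of its characteristic subgroups Galois-stable, so Lemma \ref{lem-j-k_isog} or a direct argument applies. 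Once these points are pinned down, the contradiction with Theorem \ref{p2result} is immediate in every branch.
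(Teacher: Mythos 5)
Your proposal is correct and follows the same route as the paper's proof: apply Lemma \ref{lem-j-k_isog} to get a rational $n$-isogeny, then Corollary \ref{kor:izogenija_stupanj} to force $[\Q(P):\Q]\mid\phi(n)$, a power of $2$, so $\Q(P)\subseteq\Q[2]$, contradicting Theorem \ref{p2result}. The paper handles $n=15$ and $n=17$ uniformly in three lines, whereas you split $n=15$ into the cases "rational $15$-isogeny'' versus "separate $3$- and $5$-isogenies''; this branch is harmless but unnecessary, since a rational $3$-isogeny and a rational $5$-isogeny with disjoint kernels compose to a rational $15$-isogeny, and once $E(\KK)_\tors$ is known to be cyclic (as established from Theorem \ref{teo:chou_max_abel} before these lemmas) the subgroup $\langle P\rangle$ is automatically $G_{\Q}$-stable and is the kernel needed for Corollary \ref{kor:izogenija_stupanj}. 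Your extra appeal to Theorem \ref{teo:rast_torzije} case (2) for $p=17$ is likewise redundant once $[\Q(P):\Q]\mid 16$ is in hand. One small caution: your stated justification that "the Galois action on $\langle P\rangle$ is by scalars because $\KK/\Q$ is abelian'' is not quite the right reason — abelianness alone does not force $\sigma(P)\in\langle P\rangle$; what does is that $E(\KK)[n]$ is $G_{\Q}$-stable (since $\KK/\Q$ is Galois) and, for odd $n>2$ in the already-restricted list of possible $E(\KK)_\tors$, cyclic, hence equal to $\langle P\rangle$.
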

\begin{proof}
Assume that $P \in E(\KK)$ is a point of order $n$. It follows that $E$ has a rational $n$-isogeny by the Lemma \ref{lem-j-k_isog}. By the Corollary \ref{kor:izogenija_stupanj} we see that 

\[[\Q(P) : \Q] \mid \phi(n).\]
On the other hand, we know that $\phi(15) = 8$ and $\phi(17) = 16$, which means that $P$ is defined over $\Q[2]$, which contradicts the Theorem \ref{p2result}.
\end{proof}
\begin{lemma}\label{lem:kompozitum_14_18}
Let $E /\Q$ be an elliptic curve and let $n \in \{14,18\}$. Then $E(\KK)$ does not contain a point of order $n$.
\end{lemma}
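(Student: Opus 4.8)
The plan is to argue as in Lemmas \ref{lem:kompozitum_16} and \ref{lem:kompozitum_15_17}, the new feature being that $\phi(14)=\phi(18)=6$ is not a power of $2$, so one cannot force $P$ into $\Q[2]$. Write $n=2m$ with $m\in\{7,9\}$ and suppose $P\in E(\KK)$ has order $n$. Set $P_0:=2P$ (of order $m$) and $P_2:=mP$ (of order $2$); then $P_0,P_2\in E(\KK)$, $\langle P\rangle=\langle P_0\rangle\oplus\langle P_2\rangle$, and $\Q(P)=\Q(P_0)\cdot\Q(P_2)$. The basic structural fact is that $\Gal(\KK/\Q)\simeq\prod_\ell\Z_\ell$, so every finite subextension of $\KK/\Q$ is cyclic, and there is a unique subfield of $\KK$ of each degree $d\mid 6$: namely $\Q$, $\Q[1][2]\subseteq\Q[2]$, $\Q[1][3]\subseteq\Q[3]$, and $F:=\Q[1][2]\cdot\Q[1][3]$ for $d=1,2,3,6$.

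First I would show that $E$ admits a rational $m$-isogeny. Since $\KK$ is totally real, $\zeta_m\notin\KK$, so $E[m]\not\subseteq E(\KK)$ by Proposition \ref{pro:weil}; hence $E(\KK)[m]=\langle P_0\rangle$ is cyclic of order $m$, and, being $\Gal(\Qbar/\Q)$-stable since $\KK/\Q$ is Galois, it is the kernel of a rational $m$-isogeny. By Corollary \ref{kor:izogenija_stupanj}, $[\Q(P_0):\Q]\mid\phi(m)=6$; and $\Q(P_2)/\Q$, being a cyclic subextension of $\KK$ generated by a $2$-torsion point, has degree $1$, $2$ or $3$. As $\{\Q,\Q[1][2],\Q[1][3],F\}$ is closed under composita, $\Q(P)$ is one of these four fields. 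If $\Q(P)=\Q$ this contradicts Mazur's theorem (since $n\in\{14,18\}$ is not an allowed order); if $\Q(P)\subseteq\Q[2]$ it contradicts Theorem \ref{p2result}; if $\Q(P)\subseteq\Q[3]$ it contradicts Theorem \ref{teo:p_jednak_3}. Thus everything reduces to the case $\Q(P)=F$.

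To handle $\Q(P)=F$ I would look at the isogeny character $\chi\colon\Gal(\Qbar/\Q)\to(\Z/m\Z)^\times$ attached to $\langle P_0\rangle$: its image is $\Gal(\Q(P_0)/\Q)$, and since $\Q(P_0)\subseteq F$, which is ramified only at $2$ and $3$, $\chi$ is a Dirichlet character of conductor dividing $72$. Together with the corresponding restriction on the cokernel character (using $\chi\cdot\chi'=\bar\omega_m$) this pins $E$ down to finitely many curves — for $m=7$ one may use instead that $X_0(14)$ has genus $1$, so there are only finitely many $E/\Q$ with a rational $14$-isogeny, and $E$ has one here whenever $\Q(E[2])$ already provides a rational $2$-isogeny, which happens unless $\Gal(\Q(E[2])/\Q)$ acts on $E[2]$ by $3$-cycles, a subcase one disposes of by applying the same reasoning to $E/\langle P_0\rangle$. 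For each of the finitely many remaining curves one then checks directly, exactly as in Lemma \ref{lem:lagani_prosti} (computing the $m$-th division polynomial over $F$ with \verb|magma| \cite{magma}), that $E(F)$ contains no point of order $n$. I expect this case $\Q(P)=F$ to be the main obstacle: in contrast with $n=15,16,17$, the point $P$ need not lie in a single $\Z_\ell$-extension, so one has to combine what happens over $\Q[2]$ with what happens over $\Q[3]$, and the cleanest self-contained route seems to be the above reduction to finitely many curves followed by a machine verification (alternatively one could invoke the classification of torsion of $\Q$-curves over sextic fields, since $F$ is one fixed sextic field).
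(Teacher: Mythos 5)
Your reduction to the four subfields $\Q$, $\Q[1][2]$, $\Q[1][3]$, $F=\Q[1][2]\Q[1][3]$ of $\KK$ is correct and aligns with the opening of the paper's argument (the paper deduces $[\Q(P_k):\Q]\mid\phi(k)=6$ and hence $\Q(P_k)\subseteq F$), and your disposal of the first three cases via Mazur, Theorem~\ref{p2result}, and Theorem~\ref{teo:p_jednak_3} is fine. The problem is that the whole weight of the lemma sits in the case $\Q(P)=F$, and there your proposal is not a proof. For $m=7$, the fallback when $\Gal(\Q(E[2])/\Q)$ acts by $3$-cycles does not close: a $7$-isogeny is prime to $2$, so $E/\langle P_0\rangle$ has literally the same mod-$2$ image as $E$, and ``applying the same reasoning'' to it loops. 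For $m=9$ the suggested route collapses outright: $X_0(18)$ has genus $0$, so there are infinitely many $E/\Q$ with a rational $18$-isogeny (and even more with just a $9$-isogeny), and no finite list to machine-check. The Dirichlet-character sketch in the first half of that paragraph is not carried far enough to substitute for this; one would need to actually classify the possible pairs $(\chi,\chi')$ of characters of conductor dividing $72$ with $\chi\chi'=\bar\omega_m$ and rule each out, and you do not do that.

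The paper's route past exactly this obstacle is worth noting because it is short and avoids any curve enumeration. Having placed $P_k\in E(F)[k]$, it invokes the quadratic-twist decomposition from \cite[Corollary~4]{najman_twist}: $E(F)[k]\simeq E(\Q[1][3])[k]\oplus E^{(2)}(\Q[1][3])[k]$ (here $F=\Q(\sqrt2)\cdot\Q[1][3]$, so the relevant twist is by $2$). Thus one of $E$, $E^{(2)}$ has a $k$-torsion point over $\Q[1][3]$. Since the $2$-torsion fields of $E$ and $E^{(2)}$ coincide, and $kP\in E(\KK)$ forces $\Q(kP)\in\{\Q,\Q[1][2],\Q[1][3]\}$ with a rational $2$-torsion point in the middle case, both $E$ and $E^{(2)}$ have a $2$-torsion point over $\Q[1][3]$. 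So some elliptic curve over $\Q$ acquires a point of order $2k=n$ over $\Q[1][3]\subseteq\Q[3]$, contradicting Theorem~\ref{teo:p_jednak_3}. If you want to salvage your approach rather than adopt this one, you need a genuine argument for $\Q(P)=F$ that works for $m=9$ as well as $m=7$; as written there is a gap.
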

\begin{proof}
Notice that $n=2k$, where $k \in \{7,9\}$. Assume that $E(\KK)$ contains a point $P$ of order $n$. Then $E(\KK)$ has a point $P_k$ of order $k$. By the Lemma \ref{lem-j-k_isog} and the Corollary \ref{kor:izogenija_stupanj} we see that 
\[[\Q(P_k) : \Q] \mid \phi(k) = 6.\]
This means that $P_k$ is defined over a number field of degree at most $6$, so it's defined over the compositum $\Q[1][2]\Q[1][3]$. From the \cite[Corollary 4]{najman_twist} we get that 
\[E(\Q[1][2]\Q[1][3])[k] \simeq E(\Q[1][3])[k] \oplus E^{(2)}(\Q[1][3])[k],\] where $E^{(2)}$ is the quadratic twist of $E$ by $2$. We know that $E^{(2)}$ is defined over $\Q$ and that having a $2$-torsion is a twist invariant property. Since $kP$ is of order $2$, we know that $E$ and $E^{(2)}$ have at least one point of order $2$ defined over $\Q[1][3]$. This means that there exists an elliptic curve defined over $\Q$ that has a point of order $k$ and a point of order $2$ defined over $\Q[1][3]$, so it has a point of order $n=2k$ defined over $\Q[1][3]$, which contradicts the Theorem \ref{teo:p_jednak_3}.
\end{proof}

\begin{lemma}\label{lem:kompozitum_25}
Let $E /\Q$ be an elliptic curve. Then $E(\KK)$ does not contain a point of order $25$.
\end{lemma}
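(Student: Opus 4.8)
\emph{Plan.} Suppose for contradiction that $P\in E(\KK)$ has order $25$. The reduced list obtained at the start of this section shows that a torsion group containing a point of order $25$ must be $\Z/25\Z$, so $E(\KK)_\tors\simeq\Z/25\Z=\diam{P}$. Since $\KK/\Q$ is Galois, every $\sigma\in\Gal(\Qbar/\Q)$ maps $P$ into $E(\KK)_\tors=\diam{P}$, so $\diam{P}$ is $\Gal(\Qbar/\Q)$-stable; hence $E$ admits a rational $25$-isogeny with kernel $\diam{P}$, and likewise a rational $5$-isogeny with kernel $\diam{5P}$. By Corollary~\ref{kor:izogenija_stupanj} we get $[\Q(P):\Q]\mid\phi(25)=20$ and $[\Q(5P):\Q]\mid\phi(5)=4$.

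Because $\Gal[\KK][\Q]\simeq\prod_{p}\Z_p$ has only cyclic finite quotients (Chinese Remainder Theorem), every finite subextension of $\KK/\Q$ is cyclic, and $\Z/2\Z$, $\Z/4\Z$ occur only as quotients of the factor $\Z_2$; thus the unique subfields of $\KK$ of degree $1$, $2$, $4$ are $\Q$, $\Q[1][2]$, $\Q[2][2]$, all contained in $\Q[2]$. Consequently $5P\in E(\Q[2])$, so by Theorem~\ref{p2result} the field $\Q(5P)$ carries no point of order $25$. As $\Q(P)/\Q(5P)$ is a subextension of the abelian extension $\KK/\Q$, it is Galois, so Proposition~\ref{prop_pdiv} gives $[\Q(P):\Q(5P)]\mid 25$; together with $[\Q(P):\Q]\mid 20$ this forces $[\Q(P):\Q(5P)]\mid 5$. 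If $[\Q(P):\Q(5P)]=1$ then $P\in E(\Q[2])$, contradicting Theorem~\ref{p2result}; so $[\Q(P):\Q(5P)]=5$. If in addition $[\Q(5P):\Q]=1$, then $\Q(P)$ is the degree-$5$ subfield of $\KK$, namely the first layer $\Q[1][5]$ of $\Q[5]$, and then $E(\Q[5])_\tors$ would contain a point of order $25$, contradicting Theorem~\ref{bigpresult} and Mazur's theorem. Hence $[\Q(5P):\Q]\in\{2,4\}$ and $[\Q(P):\Q]=5[\Q(5P):\Q]\in\{10,20\}$; moreover, $\Gal(\Q(P)/\Q)$ being cyclic, $\Q(P)$ is the compositum of its $2$-part $\Q(5P)\subseteq\Q[2]$ and its $5$-part, which is the unique degree-$5$ subfield $\Q[1][5]$ of $\KK$.

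It remains to eliminate these two configurations, which is the core of the argument. Let $\chi\colon\Gal(\Qbar/\Q)\to(\Z/25\Z)^\times$ be the character describing the action on $\diam{P}$, so $\Q(P)=\Qbar^{\,\ker\chi}$. By the previous paragraph the $2$-part of $\chi$ cuts out $\Q(5P)$, which is unramified at $5$, while the $5$-part of $\chi$ cuts out $\Q[1][5]$, which is totally ramified at $5$; hence the inertia group $I_5$ at $5$ has $|\chi(I_5)|=5$ exactly, while $I_5$ fixes $5P$. Inspecting $E[25]$ as an $I_5$-module in each reduction type --- good ordinary (via the connected--\'etale sequence), good supersingular ($I_5$ acts irreducibly on $E[5]$), and multiplicative or potentially multiplicative (only the ramified quadratic and the full cyclotomic lines are $I_5$-stable) --- one finds that the only type compatible with "$|\chi(I_5)|=5$ and $I_5$ fixes $5P$" is additive reduction of Kodaira type $III$ or $III^*$ at $5$, so $j(E)\equiv 1728\pmod 5$ and $E$ acquires good ordinary reduction over $\Q_5(\zeta_5)$. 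Likewise, $\Q(5P)$ is ramified at $2$ while the mod-$5$ cyclotomic character is unramified at $2$, so $E$ cannot have multiplicative reduction at $2$ either; thus $E$ has additive reduction at $2$ as well.

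Finally one combines the existence of a rational $25$-isogeny --- which places $j(E)$ on the genus-$0$ curve $X_0(25)$, i.e.\ $j(E)$ is a rational function of one parameter --- with this rigid local behaviour at $2$ and $5$ to reach a contradiction: either by a direct computation, in the spirit of the treatment of $p=163$ in Lemma~\ref{lem:lagani_prosti}, with the $25$\textsuperscript{th} division polynomial over the two explicit subfields $\Q(\sqrt2)\cdot\Q[1][5]$ (degree $10$) and $\Q[2][2]\cdot\Q[1][5]$ (degree $20$) of $\KK$, using that for a curve on the $X_0(25)$-family the relevant factor of $\psi_{25}$ has controlled degree, or by invoking the known classification of $5$-isogeny characters of elliptic curves over $\Q$ to rule out $\mu=\chi\bmod 5$ being a nontrivial character unramified outside $2$ of the above shape. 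I expect this last step --- controlling the isogeny character along the $X_0(25)$-family, equivalently understanding the arithmetic of the cover $X_1(25)\to X_0(25)$ --- to be the main obstacle; everything preceding it is formal.
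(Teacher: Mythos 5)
Your opening reductions are fine and in places cleaner than the paper's: you correctly force a rational $25$-isogeny, get $[\Q(P):\Q]\mid 20$ and $[\Q(5P):\Q]\mid 4$ from Corollary~\ref{kor:izogenija_stupanj}, observe that the unique subfields of $\KK$ of degree $1,2,4$ lie inside $\Q[2]$, apply Proposition~\ref{prop_pdiv} to get $[\Q(P):\Q(5P)]\mid 5$, and rule out $[\Q(P):\Q(5P)]=1$ via Theorem~\ref{p2result} and $[\Q(5P):\Q]=1$ via Theorem~\ref{bigpresult}. That part is formal and correct, and it is a different (and arguably tidier) way to reach essentially the same intermediate conclusion as the paper, namely that $\Q(P)$ is the compositum of a $2$-power subfield of $\Q[2]$ with $\Q[1][5]$.

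However, from that point on the proposal does not constitute a proof. The ramification discussion is a sketch with several unjustified jumps (the assertion that ``the only type compatible is additive of Kodaira type $III$ or $III^*$'' is not argued, and the inertia-module analysis in the supersingular/ordinary/multiplicative cases is only gestured at), and the final paragraph explicitly offers two speculative strategies --- a division-polynomial computation over degree-$10$ and degree-$20$ subfields, or an appeal to a ``known classification of $5$-isogeny characters'' --- without carrying either out, and you yourself flag this step as ``the main obstacle.'' So the contradiction is never actually produced.

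What the paper does to close precisely this gap is quite different: instead of local/ramification data, it works globally with the mod-$25$ image $G_\Q(25)\le\GL_2(\Z/25\Z)$. Using that $\Gal[\Qbar][\Q[2][2]\Q[1][5]]$ fixes $P$, that $\det G_{\Q[2][2]\Q[1][5]}(25)$ is the unique order-$4$ subgroup $\diam{7}\le(\Z/25\Z)^\times$ (via \cite[Lemma~3.4]{CDKN}), and that $\Gal[\Q(E[25])][\Q[2][2]]$ does \emph{not} fix $P$ (Theorem~\ref{p2result}), one pins $G_\Q(25)$ inside the group $\left\{\begin{pmatrix}a & * \\ 0 & b\end{pmatrix} : a\in(\Z/25\Z)^\times,\ b\in\diam{7}\right\}$ of index divisible by $25$. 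This yields $25\mid[\GL_2(\Z/25\Z):G_\Q(25)]$, contradicting Greenberg's theorem \cite[Theorem~2]{greenberg} on the index of the $5$-adic image for curves with a rational $5$-isogeny. That index bound is the ingredient your proposal is missing; without it (or a worked-out substitute), the proof is incomplete.
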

\begin{proof}
Assume that $P \in E(\KK)$ is a point of order $25$. Using the Lemma \ref{lem-j-k_isog} we see that $E$ has a rational $25$-isogeny, which means that $\Gal[\Q(E[25])][\Q]$ acts on $\diam{P}$, so for each $\sigma \in \Gal[\Q(E[25])][\Q]$ there exists some $a \in (\Z/25\Z)^\times$ such that $P^\sigma = aP$. By the Corollary \ref{kor:izogenija_stupanj} we see that $[\Q(P) : \Q] \mid \phi(25) = 20$. Furthermore, the point $5P$ is of order $5$ and by the same reasoning we conclude that $[\Q(5P) : \Q] \mid \phi(5) = 4$. Therefore, we have 
\[\Q(P) \subseteq \Q[2][2]\Q[1][5] \qquad \text{and} \qquad \Q(5P) \subseteq \Q[2][2].\]
Each $\sigma \in \Gal[\Q(E[25])][\Q[2][2]]$ fixes the point $5P$, so we conclude that $G_{\Q[2][2]}(25)$ is (up to conjugacy) of the form 
\[\left\lbrace\begin{pmatrix}a & \ast \\ 0 & \ast\end{pmatrix} : a \in 1 + 5\Z/25\Z\right\rbrace.\]
We also know that $[\Q(\zeta_{25}) : \Q[1][5]] = 4$, which means that $\left|\Gal[\Q(\zeta_{25})][\Q[1][5]]\right| = 4$. By the \cite[Lemma 3.4.]{CDKN} we have that $\det G_{\Q[2][2]\Q[1][5]}(25)$ is isomorphic to an unique order $4$ subgroup of $(\Z/25\Z)^\times$, which is $\diam{7} = \{7, -1, -7, 1\}$. Let us recall that $\Gal[\Qbar][\Q[2][2]\Q[1][5]]$ fixes the point $P$, so 
\[G_{\Q[2][2]\Q[1][5]}(25) \leq \left\lbrace\begin{pmatrix}1 & \ast \\ 0 & b\end{pmatrix} : b \in \{7, -1, -7, 1\} \right\rbrace.\]
Notice that $\Gal[\Q(E[25])][\Q[2][2]]$ does not fix the point $P$, by the Theorem \ref{p2result} and that 

\[[G_{\Q[2][2]}(25) : G_{\Q[2][2]\Q[1][5]}(25)] = [\Q[2][2]\Q[1][5] : \Q[2][2]] = 5.\]

Therefore we have that
\[G_{\Q[2][2]}(25) \leq \left\lbrace\begin{pmatrix}a & \ast \\ 0 & b\end{pmatrix} : a \in 1 + 5\Z/25\Z,\ b \in \{7, -1, -7, 1\}\right\rbrace.\]
We have that $[G_{\Q}(25) : G_{\Q[2][2]}(25)] = [\Q[2][2] : \Q] = 4$ and 
\[[(\Z/25\Z)^\times : 1 + 5\Z/25\Z] = 4 \qquad \text{i} \qquad [(\Z/25\Z)^\times : \diam{7}] = 5.\]
Finally we conclude that 
\[G_{\Q}(25) \leq \left\lbrace\begin{pmatrix}a & \ast \\ 0 & b\end{pmatrix} : a \in (\Z/25\Z)^\times,\ b \in \{7, -1, -7, 1\}\right\rbrace.\]
We compute

\[25 \mid 150 \mid [\GL_2(\Z/25\Z) : G_{\Q}(25)] \mid [\Aut_{\Z_5}(T_5(E)) : \Img(\overline{\rho}_{5, E})],\]
which contradicts the Theorem \cite[Theorem 2]{greenberg}.

\end{proof}
\begin{lemma}\label{lem:kompozitum_10}
Let $E/\Q$ be an elliptic curve such that $E(\KK)$ contains a point of order $10$. Then 
\[E(\KK)_\tors \simeq \Z/10\Z.\]

\end{lemma}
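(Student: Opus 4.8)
\emph{Proof idea.} Inspecting the list of groups that can occur as $E(\KK)_\tors$ --- the restriction of Theorem~\ref{teo:chou_max_abel} obtained above, further pruned by the preceding lemmas --- the only ones containing an element of order $10$ are $\Z/10\Z$ and $\Z/2\Z\oplus\Z/10\Z$ (neither $\Z/20\Z$ nor $\Z/2\Z\oplus\Z/20\Z$ occurs in Theorem~\ref{teo:chou_max_abel}). Hence it suffices to rule out $E(\KK)_\tors\simeq\Z/2\Z\oplus\Z/10\Z$. Suppose $E(\KK)_\tors\simeq\Z/2\Z\oplus\Z/10\Z$ and fix $Q\in E(\KK)$ of order $10$; then $E[2]\subseteq E(\KK)$, $\langle Q\rangle+E[2]=E(\KK)_\tors$, and $5Q\in E[2]$. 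As $E(\KK)$ contains no full $5$-torsion, Lemma~\ref{lem-j-k_isog} provides a rational $5$-isogeny on $E$; since $\KK/\Q$ is Galois, $E(\KK)[5]=\langle 2Q\rangle$ is $\Gal[\Qbar][\Q]$-stable, so we may take this isogeny to have kernel $\langle 2Q\rangle$. By Corollary~\ref{kor:izogenija_stupanj}, $[\Q(2Q):\Q]\mid\phi(5)=4$; and as $\Q(2Q)$ is an abelian $2$-power extension of $\Q$ inside $\KK$ of degree at most $4$, it is contained in $\Q[2][2]$, the second layer of the $\Z_2$-extension $\Q[2]$.

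Next I would analyse $\Q(E[2])$. As a subfield of $\KK$ it is cyclic over $\Q$ (every finite quotient of $\Gal[\KK][\Q]\cong\prod_p\Z_p$ is cyclic), and it embeds in $S_3$, so $\Gal[\Q(E[2])][\Q]$ is trivial, $\Z/2\Z$, or $\Z/3\Z$; accordingly $\Q(E[2])$ equals $\Q$, $\Q[1][2]$, or $\Q[1][3]$, these being the unique subfields of $\KK$ of degrees $1,2,3$. If $[\Q(E[2]):\Q]\le 2$, then $\Q(E[2])\subseteq\Q[2][2]$, hence $\Q(Q)=\Q(2Q)\,\Q(5Q)\subseteq\Q[2][2]$, so $E(\Q[2][2])_\tors\supseteq\langle Q\rangle+E[2]\simeq\Z/2\Z\oplus\Z/10\Z$ and therefore $E(\Q[2])_\tors\supseteq\Z/2\Z\oplus\Z/10\Z$, contradicting Theorem~\ref{p2result}.

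It remains to handle $\Q(E[2])=\Q[1][3]$, i.e.\ $\overline{\rho}_{2,E}$ has image $A_3$; this is the crux, since now $E$ has no rational $2$-torsion and the argument above fails. I would split on $[\Q(2Q):\Q]$. If $[\Q(2Q):\Q]\le 2$, then $\Q(2Q)\subseteq\Q[1][2]$ and $\Q(Q)\subseteq\Q[1][2]\Q[1][3]$, so by \cite[Corollary 4]{najman_twist}
\[E(\Q[1][2]\Q[1][3])[10]\simeq E(\Q[1][3])[10]\oplus E^{(2)}(\Q[1][3])[10],\]
where $E^{(2)}$ is the quadratic twist by $2$. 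Since reduction mod $2$ kills the twisting character, $E^{(2)}[2]\cong E[2]$ as $\Gal[\Qbar][\Q]$-modules, so both $E$ and $E^{(2)}$ have full $2$-torsion over $\Q[1][3]$, while the image of $Q$ under the displayed isomorphism forces one of them to acquire a point of order $5$ over $\Q[1][3]$; that curve then has $\Z/2\Z\oplus\Z/10\Z$ inside its $\Q[3]$-torsion, contradicting Theorem~\ref{teo:p_jednak_3}. The remaining subcase $[\Q(2Q):\Q]=4$ --- in which $\Q(2Q)=\Q[2][2]$ is totally real --- should be settled by an explicit division-polynomial computation, as in the proof of Lemma~\ref{lem:lagani_prosti}. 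Alternatively one can dispose of the whole case $\Q(E[2])=\Q[1][3]$ at once: image $A_3$ forces the discriminant of the $2$-division polynomial, hence $j(E)-1728$, to be a square in $\Q^\times$, and combined with the existence of a rational $5$-isogeny this confines $j(E)$ to the image of the rational points of a rank-zero elliptic curve --- the double cover of the genus-zero curve $X_0(5)$ on which $j-1728$ becomes a square --- all of which correspond to cusps or to $j\in\{0,1728\}$, for which $\overline{\rho}_{2,E}$ is never $A_3$. The main obstacle is exactly this case $\Q(E[2])=\Q[1][3]$: once $E$ has lost its rational $2$-torsion, torsion growth in $\Z_p$-extensions no longer suffices, and one must bring in either the twist decomposition of \cite{najman_twist} or the modular curve $X_0(5)$.
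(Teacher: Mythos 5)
Your argument splits into the same three cases as the paper, according to whether $G_\Q(2)$ is trivial, of order $2$, or isomorphic to $A_3$, and the first two cases are handled exactly as the paper does (2-torsion over $\Q[1][2]$, the order-$5$ point over $\Q[2][2]$ via Corollary~\ref{kor:izogenija_stupanj}, contradiction with Theorem~\ref{p2result}). Your ``alternative'' disposal of the $A_3$ case is also precisely what the paper does: by Zywina's classification \cite[Thm.~1.1, Thm.~1.4]{zywina} the $A_3$ condition forces $j(E)-1728=u^2$ and the rational $5$-isogeny forces $j(E)=5^2(v^2+10v+5)^3/v^5$, and the resulting curve $C$ --- your ``double cover of $X_0(5)$ on which $j-1728$ becomes a square'' --- is checked in \verb|magma| to be birationally equivalent to the rank-zero curve \href{http://www.lmfdb.org/EllipticCurve/Q/20a3}{20a3}, whose two rational points do not give affine points of $C$. (The paper also first invokes \cite[Thm.~3.6]{gn} to rule out CM before quoting Zywina; you skip this, but for $A_3$ image combined with a $5$-isogeny the CM $j$-invariants are easily excluded, and your remark about $j\in\{0,1728\}$ shows you are aware of the issue.)

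Two things fall short of a complete proof as written. First, your \emph{primary} route, splitting on $[\Q(2Q):\Q]$, is left unfinished: the $[\Q(2Q):\Q]=4$ subcase is dismissed with ``should be settled by an explicit division-polynomial computation,'' which does not work as stated --- there is no finite list of $j$-invariants to feed a division polynomial, since $A_3$ image plus $5$-isogeny is a one-parameter family, so a bare division-polynomial search has nothing to iterate over; you would have to reduce to a modular-curve computation anyway, which is exactly your alternative. Second, in the alternative the crucial assertion that the rational points of the double cover ``all correspond to cusps or to $j\in\{0,1728\}$'' is not verified; the paper's proof consists essentially of this verification (the \verb|magma| computation identifying the curve as 20a3 and checking its two rational points). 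So the structure and key ideas match the paper, but the final arithmetic of the modular curve is asserted rather than carried out, and the stated fallback for the hard subcase would not go through.
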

\begin{proof}
The only other possibility for $E(\KK)_\tors$ is the group $\Z/2\Z \oplus \Z/10\Z$, so we shall eliminate that possibility. For the sake of contradiction, assume that $E(\KK)_\tors \simeq \Z/2\Z \oplus \Z/10\Z$.

The entire $2$-torsion of $E$ is defined over $\KK$ which is Galois over $\Q$ so we conclude that $G_{\Q}(2)$ is conjugate to one of the following groups:

\[G_1 = \left\lbrace\begin{pmatrix}1 & 0 \\ 0 & 1\end{pmatrix}\right\rbrace, \quad G_2 = \left\lbrace\begin{pmatrix}1 & 0 \\ 0 & 1\end{pmatrix},\begin{pmatrix}1 & 1 \\ 0 & 1\end{pmatrix}\right\rbrace, \quad G_3 = \left\lbrace\begin{pmatrix}1 & 0 \\ 0 & 1\end{pmatrix},\begin{pmatrix}1 & 1 \\ 1 & 0\end{pmatrix},\begin{pmatrix}0 & 1 \\ 1 & 1\end{pmatrix}\right\rbrace.\]
In the case when $G_{\Q}(2) = G_1$ or $G_{\Q}(2) \cong G_2$ (up to conjugacy), we can immediately conclude that the entire $2$-torsion subgroup of $E$ is defined over at most quadratic extension of $\Q$, so it's defined over $\Q[1][2]$. Furthermore, using the Lemma \ref{lem-j-k_isog} and the Corollary \ref{kor:izogenija_stupanj} we see that the point of order $5$ is defined over $\Q$, $\Q[1][2]$ or and $\Q[2][2]$. We conclude that $E(\Q[2])_\tors$ contains $\Z/2\Z \oplus \Z/10\Z$, which contradicts the Theorem \ref{p2result}.
It remains to consider the case when $G_{\Q}(2) = G_3$. Using the \cite[Theorem 3.6.]{gn} we conclude that $E$ does not have CM. Moreover, using the \cite[Theorem 1.1.]{zywina} we see that there exists a rational number $u$ such that 

\[j(E) = u^2 + 1728.\]
The Lemma \ref{lem-j-k_isog} implies that $E$ has a rational $5$-isogeny, which means that $G_{\Q}(5)$ is conjugate subgroup of 

\[\left\lbrace\begin{pmatrix}a & b \\ 0 & c\end{pmatrix} : a, c \in \F_5^\times, b \in \F_5\right\rbrace.\]
Using the \cite[Theorem 1.4.]{zywina} we see that there exists a rational number $v \neq 0$ such that 

\[j(E) = \frac{5^2(v^2 + 10v + 5)^3}{v^5}.\]
Therefore, we get the modular curve 

\[C\ :\ 25(v^2 + 10v + 5)^3 - u^2v^5 - 1728v^5 = 0.\]
Computation in \verb|magma| \cite{magma} shows that $C$ is birrationally equivalent to an elliptic curve with Cremona label \href{http://www.lmfdb.org/EllipticCurve/Q/20a3}{20a3}. That curve has only two rational points and both of those points do not correspond to affine points on $C$.
\end{proof}
\begin{lemma}\label{lem:kompozitum_12}
Let $E/\Q$ be an elliptic curve such that $E(\KK)$ contains a point of order $12$. Then 
\[E(\KK)_\tors \simeq \Z/12\Z.\]
\end{lemma}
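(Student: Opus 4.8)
The plan is to mimic the structure of the proof of Lemma \ref{lem:kompozitum_10}: the only other candidate for $E(\KK)_\tors$ containing a point of order $12$ is $\Z/2\Z \oplus \Z/12\Z$, so I assume for contradiction that $E(\KK)_\tors \simeq \Z/2\Z \oplus \Z/12\Z$ and derive a contradiction. Since the full $2$-torsion is then defined over $\KK$, which is Galois over $\Q$, the mod-$2$ image $G_\Q(2)$ is conjugate to one of $G_1, G_2, G_3$ exactly as in the previous lemma. In the cases $G_\Q(2) = G_1$ or $G_\Q(2) \cong G_2$ the full $2$-torsion is defined over a quadratic extension, hence over $\Q[1][2]$, and by Lemma \ref{lem-j-k_isog} together with Corollary \ref{kor:izogenija_stupanj} the cyclic part forces a point of order $3$ (and a point of order $4$) over a field of degree dividing $\phi(3) = 2$ resp.\ $\phi(4)=2$ inside $\Q[2]$, so $E(\Q[2])_\tors$ would contain $\Z/2\Z \oplus \Z/12\Z$, contradicting Theorem \ref{p2result}. (Here one should be slightly careful: one wants the point of order $4$ over $\Q[2]$; since $E$ has a rational $4$-isogeny by Lemma \ref{lem-j-k_isog}, the $x$-coordinate of a point of order $4$ lies in a field of degree dividing $\phi(4)=2$, and combined with full $2$-torsion over $\Q[1][2]$ one checks the whole point of order $4$ is defined over $\Q[2]$.)

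The remaining case $G_\Q(2) = G_3$ is the heart of the argument. Here $E$ has no CM by \cite[Theorem 3.6.]{gn}, and by \cite[Theorem 1.1.]{zywina} there is a rational $u$ with $j(E) = u^2 + 1728$. On the other hand $E$ has a rational $4$-isogeny (from the point of order $12$, via Lemma \ref{lem-j-k_isog}), hence in particular a rational $4$-isogeny; combined with the rational $3$-isogeny coming from the point of order $3$, the curve $E$ has a rational $12$-isogeny. I would invoke the parametrization of the modular curve $X_0(12)$ (or the relevant $X_0(n)$ for $n \in \{3,4\}$, as in Zywina's tables) to write $j(E)$ as an explicit rational function of a parameter, say $j(E) = R(t)$ for some $R \in \Q(t)$. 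Intersecting with $j(E) = u^2 + 1728$ produces an explicit curve $C : R(t) - u^2 - 1728 = 0$ over $\Q$, and I would run a \verb|magma| computation to find its smooth projective model and its rational points, expecting to conclude as in Lemma \ref{lem:kompozitum_10} that the only rational points are cusps or CM points and hence none give a genuine $E/\Q$ with $E(\KK)_\tors \simeq \Z/2\Z\oplus\Z/12\Z$.

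The main obstacle I anticipate is controlling the $G_3$ case cleanly. Unlike order $5$, the prime $3$ behaves differently ($3 - 1 \mid 2$), so one must be careful that the $3$-isogeny and the constraints from $G_\Q(2) = G_3$ are genuinely incompatible with a rational point on $C$; it is conceivable the resulting curve $C$ has positive genus and nontrivial Mordell--Weil group, in which case one would need to argue (again via \verb|magma|) that every rational point is a cusp or fails to lift to an actual elliptic curve with the required torsion. A secondary subtlety is that a point of order $12$ over $\KK$ really does force rational $4$- and $3$-isogenies simultaneously — this needs Lemma \ref{lem-j-k_isog} applied with the $2$-part and $3$-part separately, using that $E(\KK)[2]$ is cyclic (of order $2$) in the $G_3$ subcase so that $k = 1$ there. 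Once these isogeny facts are pinned down, the rest is a finite computation paralleling the previous lemma.
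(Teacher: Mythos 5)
There is a genuine gap. Your proposal reuses the $G_1, G_2, G_3$ trichotomy from Lemma \ref{lem:kompozitum_10} and treats the $G_3$ case as live, anticipating a modular-curve computation as "the heart of the argument." But in the present lemma the $G_3$ case is vacuous, and that is precisely why this proof is much shorter than the order-$10$ one. Under the assumption $E(\KK)_\tors \simeq \Z/2\Z\oplus\Z/12\Z$, the torsion contains $\Z/2\Z\oplus\Z/4\Z$, and the largest $k$ with $E[2^k]\subseteq E(\KK)$ is $k=1$; Lemma \ref{lem-j-k_isog} with $p=2$, $k=1$, $j=2$ gives a \emph{rational $2$-isogeny}. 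A rational $2$-isogeny means $G_\Q(2)$ fixes a nonzero vector of $\F_2^2$, which $G_3$ (the cyclic group of order $3$) does not. So $G_\Q(2)$ is $G_1$ or conjugate to $G_2$ from the outset, and the $X_0$-type computation you anticipate never arises. The paper then places $E[2]$, the point of order $3$ (rational $3$-isogeny plus Corollary \ref{kor:izogenija_stupanj}), and the point of order $4$ (via Proposition \ref{pro:gonzalo_najman}, not an isogeny argument) all inside $\Q[2]$, contradicting Theorem \ref{p2result}.

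Two secondary errors in your write-up are symptoms of the same confusion. First, you assert that ``$E(\KK)[2]$ is cyclic (of order $2$) in the $G_3$ subcase so that $k=1$ there,'' but this contradicts the running hypothesis: if $E(\KK)_\tors\simeq\Z/2\Z\oplus\Z/12\Z$ then $E(\KK)[2]\simeq(\Z/2\Z)^2$ regardless of what $G_\Q(2)$ is, and cyclic $2$-torsion of order $2$ would give $k=0$, not $k=1$. Second, you repeatedly invoke a \emph{rational $4$-isogeny}, but Lemma \ref{lem-j-k_isog} with $k=1$, $j=2$ yields only degree $2^{j-k}=2$; no $4$-isogeny is available here, which is exactly why the paper switches to Proposition \ref{pro:gonzalo_najman} to bound $[\Q(P_4):\Q(2P_4)]$ by a power of $2$ and thereby force $\Q(P_4)\subseteq\Q[2]$.
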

\begin{proof}
The only other possibility for $E(\KK)_\tors$ is $\Z/2\Z \oplus \Z/12\Z$. Assume that $E(\KK)_\tors \simeq \Z/2\Z \oplus \Z/12\Z$.
The Lemma \ref{lem-j-k_isog} implies that $E$ has a rational $2$-isogeny, which means that $G_{\Q}(2) = G_1$ or $G_{\Q}(2) \sim G_2$ (up to conjugacy), where $G_1$ and $G_2$ are the same as in the proof of Lemma \ref{lem:kompozitum_10}. Therefore we conclude that the entire $2$-torsion subgroup of $E$ is defined over $\Q[2]$.
By the Lemma \ref{lem-j-k_isog} we have that $E$ has a rational $3$-isogeny, which means that the point of order $3$ is defined over at most quadratic extension of $\Q$, so it's defined over $\Q[2]$. From the Proposition \ref{pro:gonzalo_najman} we have that the point of order $4$ is also defined over $\Q[2]$.
Finally, we have that $E(\Q[2])_\tors \simeq \Z/2\Z \oplus \Z/12\Z$, which contradicts the Theorem \ref{p2result}.
\end{proof}
\begin{lemma}\label{lem:kompozitum_13}
There exists elliptic curves $E/\Q$ such that \[E(\Q)_\tors = \{0\} \qquad \text{and} \qquad E(\KK)_\tors \simeq \Z/13\Z.\]
\end{lemma}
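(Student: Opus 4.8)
The plan is to write down one explicit curve and then read off the isomorphism type of $E(\KK)_\tors$ from the restrictions already obtained in this section. First, I would reduce to a statement about $13$-isogenies. Since the $\Z_p$-extensions of $\Q$ are pairwise linearly disjoint, $\Gal[\KK][\Q]\simeq\prod_p\Z_p$, and the inertia subgroup at $13$ is the $\Z_{13}$-factor; hence every finite subfield of $\KK$ of degree prime to $13$ is unramified at $13$, and in particular $\Q(\zeta_{13})\not\subseteq\KK$. By Proposition~\ref{pro:weil} this gives $E[13]\not\subseteq E(\KK)$, so, $\KK/\Q$ being Galois, $E(\KK)[13]=\diam P$ is $\Gal[\KK][\Q]$-stable and $E$ has a rational $13$-isogeny with kernel $\diam P$ (cf.\ Lemma~\ref{lem-j-k_isog}). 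The Galois action on $\diam P$ is through a character $\chi\colon\Gal[\Qbar][\Q]\to(\Z/13\Z)^\times$, so $\Q(P)=\Qbar^{\ker\chi}$ is cyclic and $[\Q(P):\Q]=|\Img\chi|\mid\phi(13)=12$ by Corollary~\ref{kor:izogenija_stupanj}; by the row $p=13$ of Theorem~\ref{teo:rast_torzije} this degree is $3$, $4$, $6$ or $12$. As $\prod_p\Z_p$ has a unique cyclic quotient of each order dividing $12$, the field $\Q(P)$ is forced to be the unique cyclic subfield of $\KK$ of its degree: $\Q[1][3]=\Q(\zeta_9)^+$ if the degree is $3$, $\Q[2][2]$ if it is $4$, $\Q[1][2]\Q[1][3]$ if it is $6$, and $\Q[2][2]\Q[1][3]$ if it is $12$. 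So it suffices to produce one $E/\Q$ with $E(\Q)_\tors=\{O\}$ carrying a rational $13$-isogeny whose kernel point is defined over one of these four fields.

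Next I would search inside the rational family $X_0(13)\cong\mathbb{P}^1$ of curves with a rational $13$-isogeny, tracking the kernel character $\chi$ along it. Asking $\Q(P)$ to be, say, the cubic field $\Q[1][3]$ (the cheapest option) forces $E$ to have good reduction at $13$, good or multiplicative reduction at every prime $\neq 3$, and additive, potentially good reduction at $3$ with the $13$-torsion ramified there — whence $81\mid N_E$ — so the problem reduces to a finite search. Carrying it out (equivalently, picking out the relevant cubic point of $X_1(13)$) should yield a concrete curve $E/\Q$ whose $13$th division polynomial has an irreducible cubic factor with splitting field $\Q(\zeta_9)^+$. Since $\chi$ has odd order, $\Q(x(P))=\Q(P)$, so the point $P$ itself lies over $\Q[1][3]\subseteq\KK$, and a direct computation (e.g.\ in \verb|magma|) confirms $E(\Q)_\tors=\{O\}$.

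Finally, for such an $E$ the group $E(\KK)_\tors$ contains a point of order $13$, and by Theorem~\ref{teo:chou_max_abel} together with the Lemmas already proved in this section this forces $E(\KK)_\tors\simeq\Z/13\Z$, while $E(\Q)_\tors=\{O\}$; quadratic-twisting by $\Q(\sqrt2)=\Q[1][2]\subseteq\KK$ keeps $\Q(P)$ inside $\KK$ and produces further examples. I expect the middle step to be the real obstacle: it is not obvious a priori that the local constraints can all be met, and the familiar low-conductor curves with a $13$-isogeny do not qualify — for a curve of conductor $147$, for instance, the kernel point lies over $\Q(\zeta_7)^+\not\subseteq\KK$ — so one has to go some way up the $X_0(13)$-family, and the existence of a usable fibre ultimately rests on an explicit finite computation.
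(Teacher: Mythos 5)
The essential content of this lemma is an existence claim, and your proposal never actually produces the curve. You correctly reduce the problem to finding an $E/\Q$ with a rational $13$-isogeny whose kernel field $\Q(P)$ sits inside $\KK$, you correctly pin down (via Corollary~\ref{kor:izogenija_stupanj} and the $p=13$ row of Theorem~\ref{teo:rast_torzije}) that $[\Q(P):\Q]\in\{3,4,6,12\}$ and hence that $\Q(P)$ must be one of four explicit subfields of $\KK$, and your closing step — that a $13$-torsion point in $E(\KK)$ together with Theorem~\ref{teo:chou_max_abel} and the earlier lemmas forces $E(\KK)_\tors\simeq\Z/13\Z$ — is correct and matches the paper's reasoning (the paper phrases it via $13q$- and $13^2$-isogenies, but it is the same observation). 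All of this is sound scaffolding. But after that you write ``should yield a concrete curve'' and ``the existence of a usable fibre ultimately rests on an explicit finite computation'' without performing it, so the lemma is not proved. The paper simply exhibits the curves \href{http://www.lmfdb.org/EllipticCurve/Q/20736c1}{20736c1} and \href{http://www.lmfdb.org/EllipticCurve/Q/20736d1}{20736d1} (found by an LMFDB search) and checks the stated properties; that computation \emph{is} the proof.

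There is also a factual wrinkle in your search heuristic that would likely have led you away from the curves that actually work. You restricted to the ``cheapest option'' $\Q(P)=\Q[1][3]$ and derived that $E$ should have good or multiplicative reduction at every prime $\ell\neq 3$ (so conductor of the form $3^a\cdot m$ with $m$ squarefree). But both of the paper's examples have conductor $20736=2^8\cdot 3^4$, hence additive reduction at $2$; for additive reduction at $2$ the isogeny character $\chi$ is necessarily ramified at $2$ (the determinant of inertia at $2$ is trivial, so if $\chi|_{I_2}=1$ the inertia image would be unipotent of order dividing $13$, which is impossible for additive reduction), and therefore $\Q(P)\neq\Q[1][3]$ for these curves. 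So the degree-$3$ case you chose to chase may well be empty, and the successful examples live in one of the other three cases (degree $4$, $6$ or $12$). This doesn't invalidate your framework — it just means the finite search has to be run over all four target fields — but it reinforces that the proposal as written does not reach the finish line.
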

\begin{proof}
Assume that $P \in E(\KK)$ is a point of order $13$. We know that $P \notin E(\Q)$ and we know that by the Lemma \ref{lem-j-k_isog}, $E$ has a rational $13$-isogeny. Assume that $E(\KK)$ contains a point $Q$ of order $q$, for some prime number $q \neq 13$. It would then follow that $E$ has a rational $13q$-isogeny, which is impossible by the Corollary \ref{kor:ubij_torziju}. Same reasoning shows that $E$ can't have a point of order $13^2$. Therefore, it remains to find an example of elliptic curve $E/\Q$ that has a point of order $13$ defined over $\KK$. Searching the LMFDB \cite{lmfdb} database using \verb|magma| \cite{magma} we find two elliptic curves defined over $\Q$ with such property:
\begin{align*}
\text{\href{http://www.lmfdb.org/EllipticCurve/Q/20736c1}{20736c1}}\ : \qquad & y^2 = x^3 + 6x + 8, \\
\text{\href{http://www.lmfdb.org/EllipticCurve/Q/20736d1}{20736d1}}\ : \qquad & y^2 = x^3 + 24x + 64. \qedhere
\end{align*}
\end{proof}
\section{Proof of Theorem \ref{teo:rast_qzetap}}
In order to prove this theorem, we will use a few well known results and some technical facts that we will now mention. The proof itself follows from the series of Lemmas.
\\
First, let us recall the Mazur's famous isogeny theorem (Theorem \ref{teo:mazur_izogenije}). Let $E/\Q$ be an elliptic curve with a rational $n$-isogeny. Then $n$ has to be one of the values from the table below. We shall also need the values $\phi(n)$.
\begin{table}[ht]
\centering
\begin{tabular}{|c||c|c|c|c|c|c|c|c|c|c|c|c|c|c|c|} \hline
$n$ & $2$ & $3$ & $4$ & $5$ & $6$ & $7$ & $8$ & $9$ & $10$ & $\boxed{11}$ & $12$ & $13$ & $14$ & $15$ & $16$ \\ \hline
$\phi(n)$ & $1$ & $2$ & $2$ & $2^2$ & $2$ & $2 \cdot 3$ & $2^2$ & $2 \cdot 3$ & $2^2$ & $2 \cdot 5$ & $2^2$ & $2^2 \cdot 3$ & $2 \cdot 3$ & $2^3$ & $2^3$ \\ \hline
\end{tabular}
\vskip 5pt
\begin{tabular}{|c||c|c|c|c|c|c|c|c|c|c|} \hline
$n$ & $\boxed{17}$ & $18$ & $19$ & $21$ & $\boxed{25}$ & $27$ & $37$ & $\boxed{43}$ & $\boxed{67}$ & $\boxed{163}$ \\ \hline 
$\phi(n)$ & $2^4$ & $2 \cdot 3$ & $2 \cdot 3^2$ & $2^2 \cdot 3$ & $2^2 \cdot 5$ & $2 \cdot 3^2$ & $2^2 \cdot 3^2$ & $2 \cdot 3 \cdot 7$ & $2 \cdot 3 \cdot 11$ & $2 \cdot 3^4$ \\ \hline
\end{tabular}
\caption{Possible rational $n$-isogenies together with values $\phi(n)$.}
\label{tablica:svi_moguci_n-ovi}
\end{table}

Numbers $n$ that are contained in a $\boxed{\phantom{1}}$ are the ones that will require special care. Soon it will be obvious why is that the case.

We will also need the following theorem:
\\
\cite[Theorem 1.1]{gonloz}:
\begin{tm}\label{teo:gonloz_abel}
Let $E/\Q$ be an elliptic curve and let $n \geq 2$ be a natural number.

If $\Q(E[n]) = \Qz[n]$, then $n \in \{2, 3, 4, 5\}$. Furthermore, if the extension $\Q(E[n]) / \Q$ is Abelian, then $n \in \{2, 3, 4, 5, 6, 8\}$ and the group $G = \Gal[\Q(E[n])][\Q]$ is isomorphic to one of the following groups:

\begin{center}
\begin{tabular}{|c||c|c|c|c|c|c|} \hline
$n$ & $2$ & $3$ & $4$ & $5$ & $6$ & $8$ \\ \hline
$G$ & \begin{tabular}{c}
$\{0\}$ \\ $\Z/2\Z$ \\ $\Z/3\Z$
\end{tabular} & \begin{tabular}{c}
$\Z/2\Z$ \\ $(\Z/2\Z)^2$
\end{tabular} & \begin{tabular}{c}
$\Z/2\Z$ \\ $(\Z/2\Z)^2$ \\ $(\Z/2\Z)^3$ \\ $(\Z/2\Z)^4$
\end{tabular} & \begin{tabular}{c}
$\Z/4\Z$ \\ $\Z/2\Z \times \Z/4\Z$ \\ $(\Z/4\Z)^2$
\end{tabular} & \begin{tabular}{c}
$(\Z/2\Z)^2$ \\ $(\Z/2\Z)^3$
\end{tabular} & \begin{tabular}{c}
$(\Z/2\Z)^4$ \\ $(\Z/2\Z)^5$ \\ $(\Z/2\Z)^6$
\end{tabular} \\ \hline
\end{tabular}
\end{center}
\end{tm}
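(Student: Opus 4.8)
The plan is to reduce the whole statement to a question about torsion over the maximal abelian extension $\Q^{\text{ab}}$ and then feed it into Theorem~\ref{teo:chou_max_abel}. The key point is that the coordinates of the $n$-torsion points of $E$ lie in $\Q(E[n])$ and generate it over $\Q$, so $\Q(E[n])/\Q$ is abelian if and only if $\Q(E[n])\subseteq\Q^{\text{ab}}$, i.e.\ if and only if $E[n]\subseteq E(\Q^{\text{ab}})$; in that case $E(\Q^{\text{ab}})_\tors$ contains a subgroup isomorphic to $E[n]\cong\Z/n\Z\oplus\Z/n\Z$. I would first run over the list in Theorem~\ref{teo:chou_max_abel} and determine which $m$ admit an embedding $\Z/m\Z\oplus\Z/m\Z\hookrightarrow H$ with $H$ on the list: a cyclic group forces $m=1$, while $\Z/a\Z\oplus\Z/b\Z$ with $a\mid b$ forces $m\mid a$. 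Going over the families $\Z/2\Z\oplus\Z/2N_2\Z$, $\Z/3\Z\oplus\Z/3N_3\Z$, $\Z/4\Z\oplus\Z/4N_4\Z$, $\Z/5\Z\oplus\Z/5\Z$, $\Z/6\Z\oplus\Z/6\Z$, $\Z/8\Z\oplus\Z/8\Z$, the set of admissible $m$ is exactly $\{1,2,3,4,5,6,8\}$. Hence $\Q(E[n])/\Q$ abelian forces $n\in\{1,2,3,4,5,6,8\}$, which is the first assertion of the ``furthermore'' part, and a fortiori $\Q(E[n])=\Qz[n]$ forces the same restriction in the first statement.

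Next I would determine $G:=G_{\Q}(n)\cong\Gal[\Q(E[n])][\Q]\leq\GL_2(\Z/n\Z)$ for each surviving $n$, using that $G$ is abelian, that $\det G=(\Z/n\Z)^\times$ (the determinant being the surjective mod-$n$ cyclotomic character), and that $G$ contains $\overline{\rho}_{E,n}(c)$ with $\det\overline{\rho}_{E,n}(c)\equiv-1\pmod{n}$ for complex conjugation $c$. For $n=2$ the abelian subgroups of $\GL_2(\F_2)\cong S_3$ are $\{0\}$, $\Z/2\Z$, $\Z/3\Z$, all realized. For odd $n$ ($n=3,5$), $\overline{\rho}_{E,n}(c)$ is conjugate to $\operatorname{diag}(1,-1)$, and since $G$ is abelian it lies in the centralizer of $\operatorname{diag}(1,-1)$ --- the split Cartan $(\Z/n\Z)^\times\times(\Z/n\Z)^\times$; enumerating the subgroups with surjective product map and discarding those that do not occur over $\Q$ (by the known classification of mod-$3$ and mod-$5$ Galois images over $\Q$) leaves $\Z/2\Z$, $(\Z/2\Z)^2$ for $n=3$ and $\Z/4\Z$, $\Z/2\Z\times\Z/4\Z$, $(\Z/4\Z)^2$ for $n=5$. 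For $n=4$ and $n=8$ I would invoke the classification of $2$-adic Galois images over $\Q$ (or enumerate directly the abelian subgroups of $\GL_2(\Z/4\Z)$ and $\GL_2(\Z/8\Z)$ obeying the above constraints): the conclusion is that such a group must be an elementary abelian $2$-group, and comparing its order against the previous step forces $(\Z/2\Z)^a$ with $a\in\{1,2,3,4\}$ for $n=4$ and $a\in\{4,5,6\}$ for $n=8$. For $n=6$ one combines the $n=2$ and $n=3$ analyses via $G\hookrightarrow G_{\Q}(2)\times G_{\Q}(3)$ and is left with $(\Z/2\Z)^2$, $(\Z/2\Z)^3$. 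In every case one then exhibits an explicit $E/\Q$ to confirm that the group occurs.

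It remains to finish the first statement. For $n\in\{2,3,4,5\}$ the equality $\Q(E[n])=\Qz[n]$ is attained --- e.g.\ by any $E$ with full rational $2$-torsion for $n=2$, and by curves realizing the order-$2$ mod-$3$ image, the order-$2$ mod-$4$ image, and the cyclic order-$4$ mod-$5$ image for $n=3,4,5$ (all living on genus-$0$ modular curves). For $n=6$: $\Q(E[6])=\Qz[6]=\Q(\sqrt{-3})$ would give $E[6]\subseteq E(\Q(\sqrt{-3}))$, hence $E(\Q(\sqrt{-3}))_\tors\supseteq\Z/6\Z\oplus\Z/6\Z$, which is impossible for a curve defined over $\Q$ viewed over a quadratic field. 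For $n=8$: the group classification above gives $[\Q(E[8]):\Q]=|G|\geq16>4=\phi(8)$ whenever $\Q(E[8])/\Q$ is abelian, so $\Q(E[8])\neq\Qz[8]$. Thus the first statement is confined to $n\in\{2,3,4,5\}$.

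Steps~1 and~3 are short once Theorem~\ref{teo:chou_max_abel} is in hand; the substance --- and the main obstacle --- is the precise determination in Step~2 of which abelian subgroups of $\GL_2(\Z/n\Z)$ actually occur as mod-$n$ Galois images over $\Q$ for $n\in\{4,5,8\}$. This is where one must either lean on the full $\ell$-adic image classifications (for $\ell=2$ and $\ell=5$) or carry out a hands-on analysis of the relevant subgroups of $\GL_2(\Z/2^k\Z)$ together with explicit modular-curve computations to rule out the abelian candidates that do not appear.
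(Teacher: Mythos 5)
This statement is not proved in the paper at all: it is quoted verbatim from Gonz\'alez-Jim\'enez and Lozano-Robledo (\cite[Theorem 1.1]{gonloz}), so there is no in-paper argument to compare against. Judged on its own, your proposal has two genuine problems. First, the reduction of the ``abelian $\Rightarrow n\in\{2,3,4,5,6,8\}$'' part to Theorem \ref{teo:chou_max_abel} is logically backwards with respect to how these results are actually established: Chou's determination of $E(\Q^\text{ab})_\tors$ is itself built on the Gonz\'alez-Jim\'enez--Lozano-Robledo classification of abelian division fields (precisely the statement you are asked to prove), so deriving the bound on $n$ from Chou is circular as a proof of this theorem, even though the embedding criterion $\Z/n\Z\oplus\Z/n\Z\hookrightarrow E(\Q^\text{ab})_\tors$ you use is formally correct.

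Second, and more importantly, the actual content of the theorem --- the exact list of groups $G=\Gal[\Q(E[n])][\Q]$ in the table, together with the fact that no others occur --- is not established by your argument. For $n=4,5,8$ you explicitly defer to ``the classification of $2$-adic (and mod-$5$) Galois images over $\Q$'' or to unspecified ``explicit modular-curve computations,'' and you acknowledge this is the main obstacle; but that deferred step \emph{is} the theorem, so the proposal as written is an outline rather than a proof. The purely group-theoretic constraints you impose (abelian, surjective determinant, containing an image of complex conjugation, centralizer of $\operatorname{diag}(1,-1)$ for odd $n$) do not by themselves yield the table: e.g.\ for $n=6$ they leave candidates such as $\Z/2\Z$ and $\Z/6\Z$, and eliminating them (as well as your exclusion of $\Q(E[6])=\Q(\zeta_6)$) silently invokes yet another unproved external input, namely the classification of torsion of elliptic curves $E/\Q$ over quadratic fields, which rules out $\Z/6\Z\oplus\Z/6\Z$ over $\Q(\sqrt{-3})$ for curves defined over $\Q$. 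Likewise the realizability of every group in the table is only asserted (``one then exhibits an explicit $E/\Q$''). So the proposal correctly identifies the shape of the argument but leaves the decisive classification and realization steps, which constitute the substance of \cite[Theorem 1.1]{gonloz}, uncarried out.
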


The following Corollary makes things easier.

\begin{corollary}\label{kor:gonloz_abel}
Let $E/\Q$ be an elliptic curve. Let $p \geq 3$ be a prime number and let $n \geq 2$ be a natural number. If $\Q(E[n]) \subseteq \Qz$, then $n \in \{2, 3, 4, 5\}$ and 
\[\Q(E[n]) \subseteq \Qz[3^3], \text{ for } p = 3 \quad \text{and} \quad \Q(E[n]) \subseteq \Qz[p], \text{ for } p \geq 5.\]
\end{corollary}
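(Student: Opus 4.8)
The plan is to reduce the statement to Theorem \ref{teo:gonloz_abel} plus an identification of which abelian number fields actually sit inside $\Qz$. First I would observe that if $\Q(E[n]) \subseteq \Qz$, then in particular $\Q(E[n])/\Q$ is abelian (as a subextension of the abelian extension $\Qz/\Q$), so Theorem \ref{teo:gonloz_abel} immediately forces $n \in \{2,3,4,5,6,8\}$ and pins down $G = \Gal[\Q(E[n])][\Q]$ to the short list given there. The remaining work is to rule out $n \in \{6,8\}$ and to locate, for $n \in \{2,3,4,5\}$, the precise layer of the cyclotomic $\Z_p$-tower containing $\Q(E[n])$.

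The key tool for both tasks is the Weil pairing: $\Q(\zeta_n) \subseteq \Q(E[n])$, so $\Q(E[n])$ always contains the $n$-th cyclotomic field. For $n = 8$ we have $\zeta_8 \in \Q(E[8]) \subseteq \Qz$, but $\Q(\zeta_8)$ has degree $4$ over $\Q$ and is not cyclic (it is $(\Z/2\Z)^2$), whereas $\Gal[\Qz][\Q]$ is either pro-cyclic ($\Z_p^\times$-type, for $p$ odd) or $\Z/2\Z \times \Z_2$ (for $p = 2$); in the odd case $\Qz$ has no $(\Z/2\Z)^2$ subquotient at all, and in the $p=2$ case the only quadratic subfields of $\Q(\mu_{2^\infty})$ that lie in a field of the stated form are controlled by the structure $\Gal[\Q(\mu_{2^\infty})][\Q] \simeq \Z/2\Z \times \Z_2$ — I would check directly that $\Q(\zeta_8) \not\subseteq \Qz$ for any $p$ because $\zeta_8$ generates a field requiring two independent $2$-power cyclotomic characters. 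Similarly $n = 6$ forces $\zeta_3 \in \Q(E[6]) \subseteq \Qz$; since $[\Q(\zeta_3):\Q] = 2$ and $\Q(\zeta_3)$ is the unique quadratic subfield of $\Q(\mu_{3^\infty})$, this is only possible when $p = 3$, but then $\Q(E[6])$ would have to be a $2$-group extension inside a pro-$3$-by-$\Z/2\Z$ group — and $\Q(E[6]) \supseteq \Q(E[2])$ with $\Q(E[2])$ of degree up to $6$, forcing a $3$-torsion piece in the Galois group that cannot coexist with $\zeta_3$; I would spell this out to get the contradiction. (Alternatively, one notes $6$ is even composite and $\Q(E[6]) = \Q(E[2])\Q(E[3])$, and runs the parity/$\Delta$-argument on each factor.)

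For the surviving cases $n \in \{2,3,4,5\}$, the point is that $\Q(E[n])/\Q$ is abelian of exponent dividing a small power of $2$ (from the table: $G$ is an elementary abelian $2$-group for $n \in \{2,4\}$, has exponent $\le 4$ and is a $2$-group for $n = 5$, and for $n = 3$ is $\Z/2\Z$ or $(\Z/2\Z)^2$), so $\Q(E[n])$ is a multiquadratic-type field of $2$-power degree. If $p \ge 5$, the only subfield of $\Qz = \Q(\mu_{p^\infty})$ of degree a power of $2$ is $\Q(\zeta_p)^{?}$ — more precisely, $\Gal[\Q(\mu_{p^\infty})][\Q] \simeq \Z/(p-1)\Z \times \Z_p$, and the only finite subextension whose degree is a power of $2$ lies inside the $\Z/(p-1)\Z$-part, which is exactly $\Qz[p] = \Q(\zeta_p)$; hence $\Q(E[n]) \subseteq \Qz[p]$. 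If $p = 3$, then $\Gal[\Q(\mu_{3^\infty})][\Q] \simeq \Z/2\Z \times \Z_3$, and a $2$-power-degree subfield must land in a finite layer; since the only $2$-torsion is the $\Z/2\Z$ factor (complex conjugation) and we need degree at most $2^2$, together with the computed bound $v_3(\phi(n)) \le 1$ coming from Corollary \ref{kojepolje}-type reasoning, one gets $\Q(E[n]) \subseteq \Q(\mu_{3^3})$; I would invoke the explicit remark after Theorem \ref{teo:rast_qzetap} (the curve $27a4$) only to see the bound $3^3$ is sharp, not for the proof itself. The main obstacle is the careful bookkeeping in the $p=2$ and $p=3$ cases: one has to be precise about which small abelian $2$-groups embed as $\Gal[\Q(E[n])][\Q]$ versus which embed in $\Gal[\Q(\mu_{p^\infty})][\Q]$, and it is here — excluding $n=8$ and nailing the layer index — that a wrong normalization of $\Delta$ versus $\Gamma$ would break the argument.
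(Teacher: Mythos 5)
Your overall plan — reduce to Theorem \ref{teo:gonloz_abel} and then locate the relevant layer of $\Qz$ — is the right one, but you take a longer route than the paper and two of your intermediate claims are false. The paper's proof is a one-liner: since $p\geq 3$, $\Gal[\Qz][\Q]\simeq \Z/(p-1)\Z\times\Z_p$ is \emph{procyclic} (the two factors have coprime supernatural order), so every finite subextension of $\Qz/\Q$ is cyclic. Consulting the table in Theorem \ref{teo:gonloz_abel}, for $n=6$ and $n=8$ every listed $G$ is a non-cyclic $2$-group, so those $n$ are impossible outright; for $n\in\{2,3,4,5\}$ the cyclic options are $\{0\},\Z/2\Z,\Z/3\Z$ ($n=2$), $\Z/2\Z$ ($n=3,4$), $\Z/4\Z$ ($n=5$), giving $[\Q(E[n]):\Q]\leq 4$, and then the layer is read off directly from the structure of $\Z/(p-1)\Z\times\Z_p$.

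Two concrete gaps in your version. First, your exclusion of $n=6$ for $p=3$ invokes a ``$3$-torsion piece in the Galois group'' coming from $\Q(E[2])$; but the table forces $\Gal[\Q(E[6])][\Q]$ to be $(\Z/2\Z)^2$ or $(\Z/2\Z)^3$, a $2$-group, so there is no $3$-torsion piece — the actual obstruction is simply non-cyclicity. Second, your layer-location step for $p\geq 5$ asserts that $\Gal[\Q(E[n])][\Q]$ is always a $2$-group for $n\in\{2,3,4,5\}$, but the table allows $\Z/3\Z$ when $n=2$; so ``only $2$-power-degree subfields survive'' is not the right filter. What one should use instead is that $[\Q(E[n]):\Q]\leq 4 < p$: a nontrivial quotient of the $\Z_p$-factor already has order $\geq p>4$, so any subfield of degree $\leq 4$ factors through the $\Z/(p-1)\Z$-part and hence lies in $\Qz[p]=\Q(\zeta_p)$ (and for $p=3$ the possible degrees $1,2,3$ all land inside $\Q(\mu_9)\subseteq\Qz[3^3]$). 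Your Weil-pairing exclusion of $n=8$ does work, but it is just a roundabout way of noticing non-cyclicity, which the table already hands you.
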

\begin{proof}
We know that $\Qz / \Q$ is cyclic, which means that $\Q(E[n]) / \Q$ is also cyclic. From the Theorem \ref{teo:gonloz_abel}, we see that the only possibilities for $n$ are $n \in \{2, 3, 4, 5\}$ and that $[\Q(E[n]) : \Q] \in \{1, 2, 3, 4\}$, which concludes the proof.
\end{proof}
\begin{proposition}\label{pro:naj_enri_growth}
Let $E/\Q$ be an elliptic curve and let $P \in E(\Qbar)$ be a point of order $2^{n+1}$, for some natural number $n$. Then
\[[\Q(P) : \Q(2P)] \mid 4.\]
\end{proposition}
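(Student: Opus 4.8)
The plan is to bound $[\Q(P):\Q(2P)]$ by analyzing the action of $\Gal(\Q(P)/\Q(2P))$ on the fibre of multiplication-by-$2$ above the point $2P$. Set $L=\Q(2P)$ and let $P$ have order $2^{n+1}$, so that $2P$ has order $2^n$ and $P$ lies in the coset $P+E[2]$ of solutions to $2X=2P$. Any $\sigma\in\Gal(\Qbar/L)$ fixes $2P$, hence sends $P$ to another solution of $2X=2P$, i.e.\ $P^\sigma=P+t_\sigma$ for a unique $t_\sigma\in E[2]$. The map $\sigma\mapsto t_\sigma$ is a cocycle, and since $E[2]$ has exponent $2$ it is in fact a homomorphism once we check that the relevant $2$-torsion translations commute with the Galois action appropriately; the upshot is that $\Gal(\Q(P)/L)$ embeds into a subgroup of $E[2]\cong(\Z/2\Z)^2$, so a priori $[\Q(P):L]\mid 4$.

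Carrying this out: first I would verify the cocycle computation. For $\sigma,\tau\in\Gal(\Qbar/L)$ we have $P^{\sigma\tau}=(P+t_\tau)^\sigma=P^\sigma+t_\tau^\sigma=P+t_\sigma+t_\tau^\sigma$. Since $t_\tau\in E[2]\subseteq E(\Q(E[2]))$ and $\Q(E[2])/\Q$ has degree dividing $6$, the element $t_\tau^\sigma$ need not equal $t_\tau$ in general — this is precisely why the naive argument gives a subgroup of $E[2]$ of order up to $4$ rather than a cyclic group of order $2$. But the map $\sigma\mapsto t_\sigma$ being injective on $\Gal(\Q(P)/L)$ (its kernel fixes $P$) already forces $[\Q(P):L]\le 4$, and since $\Q(P)/L$ is generated by adjoining a $2$-division point its degree is a power of $2$; combined with $\le 4$ this gives the divisibility by $4$ directly, without needing the homomorphism property. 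So the cleanest route is: embed $\Gal(\Q(P)/L)\hookrightarrow E[2]$ as a set via $\sigma\mapsto t_\sigma$, note this is injective, conclude $[\Q(P):L]\le |E[2]|=4$, and separately observe $[\Q(P):L]$ is a $2$-power because $\Q(P)=L(P)$ with $2P\in L$.

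Alternatively, and perhaps more in the spirit of the paper, this follows from Proposition \ref{pro:gonzalo_najman} applied with $p=2$: that result gives $[\Q(P):\Q(2P)]\mid p^2$ or $(p-1)p$, which for $p=2$ reads $[\Q(P):\Q(2P)]\mid 4$ or $\mid 2$, and in either case $[\Q(P):\Q(2P)]\mid 4$. I would state it this way and simply cite Proposition \ref{pro:gonzalo_najman}, since the general statement there already subsumes the case at hand. The only thing to double-check is that Proposition \ref{pro:gonzalo_najman} is stated with no hypothesis that $\Q(P)/\Q(pP)$ be Galois — inspecting its statement in the excerpt, it indeed has no such hypothesis (unlike Proposition \ref{prop_pdiv}), so the reduction is immediate and the main ``obstacle'' is merely bookkeeping: confirming that $p=2$ makes both alternatives $4$ and $2$ collapse to the single clean divisibility $[\Q(P):\Q(2P)]\mid 4$.
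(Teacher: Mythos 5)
Your second argument --- specialize Proposition~\ref{pro:gonzalo_najman} to $p=2$ and observe that both $p^2=4$ and $(p-1)p=2$ divide $4$ --- is correct, complete, and is clearly the intended justification: the paper states Proposition~\ref{pro:naj_enri_growth} without proof, and the labels make plain it is a specialization of the Gonz\'alez-Jim\'enez--Najman result. You are also right that Proposition~\ref{pro:gonzalo_najman}, unlike Proposition~\ref{prop_pdiv}, carries no Galois hypothesis, so the reduction needs no further checking. If you go with this route, you are done.

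Your first, ``direct'' argument has a genuine gap, and it is worth seeing exactly where. The cocycle $\sigma\mapsto t_\sigma=P^\sigma-P$ does inject $\Gal[\Q(P)][L]$ into $E[2]$ as a set, giving $[\Q(P):L]\le 4$, but that inequality alone leaves degree~$3$ on the table, and the auxiliary claim you invoke to kill it --- that $[\Q(P):L]$ must be a power of~$2$ ``because $\Q(P)=L(P)$ with $2P\in L$'' --- is not true as a general principle. Indeed the cocycle picture is perfectly consistent with a $\Z/3\Z$ quotient: a generator $\sigma$ would have to permute the three nonzero elements of $E[2]$ cyclically, and then $t_\sigma,\ t_{\sigma^2}=t_\sigma+\sigma(t_\sigma),\ t_{\sigma^3}=0$ are pairwise distinct, so nothing in the halving-fibre argument rules it out. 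And the general statement of Proposition~\ref{pro:gonzalo_najman} is itself the counterexample to the heuristic: for odd $p$ the divisor $(p-1)p$ is emphatically not a power of $p$, so adjoining a $p$-division point does \emph{not} automatically give a $p$-power degree. That it works out for $p=2$ is precisely because $(p-1)p=2$ happens also to be a power of $2$; this coincidence is the content you need from Proposition~\ref{pro:gonzalo_najman}, not something you can get for free. So the ``cleanest route'' you sketched is not actually self-contained; stick with the citation.
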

\begin{corollary}\label{kor:naj_enri_growth}
Let $E/\Q$ be an elliptic curve and $P \in E(\Qz)$ be a point of order $2^n$, for some natural number $n$ and $p \geq 3$ a prime number. Then
\[\Q(P) \subseteq \Qz[3^3], \text{ for } p = 3 \quad \text{and} \quad \Q(P) \subseteq \Qz[p], \text{ for } p \geq 5.\]
\[\Q(P) \subseteq \Qz[3^3], \text{ za } p = 3 \quad \text{and} \quad \Q(P) \subseteq \Qz[p], \text{ za } p \geq 5.\]
\end{corollary}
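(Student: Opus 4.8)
The plan is to bound the degree $[\Q(P):\Q]$ by climbing the $2$-power tower below $P$, and then to use the pro-cyclic structure of $\Gal[\Qz][\Q]\cong\Z_p^\times$ (valid for $p\geq 3$) to pin down where $\Q(P)$ lies inside $\Qz$. We may assume $n\geq 1$, as $P$ is trivial when $n=0$. Set $P_j:=2^{n-j}P$ for $0\leq j\leq n$, so that $P_0=O$, the point $P_j$ has order $2^j$, $P_n=P$, and $2P_j=P_{j-1}$; this yields a tower $\Q=\Q(P_0)\subseteq\Q(P_1)\subseteq\cdots\subseteq\Q(P_n)=\Q(P)$. Since $\Q(P)\subseteq\Qz$ and $\Qz/\Q$ is abelian and pro-cyclic, the group $\Gal[\Q(P)][\Q]$ is cyclic.

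The first step is to show that $[\Q(P):\Q]$ is $\{2,3\}$-smooth with $3$-part at most $3$. The point $P_1$ has order $2$, so $y(P_1)=0$ and $x(P_1)$ is a root of a cubic over $\Q$, whence $[\Q(P_1):\Q]\in\{1,2,3\}$. For $2\leq j\leq n$ the point $P_j$ has order $2^j\geq 4$, so Proposition \ref{pro:naj_enri_growth} applies to $P_j$ and gives $[\Q(P_j):\Q(P_{j-1})]=[\Q(P_j):\Q(2P_j)]\mid 4$. Multiplying through the tower, $[\Q(P):\Q]$ divides $3\cdot 4^{n-1}$; in particular $[\Q(P):\Q]=2^{a}3^{b}$ for some $a\geq 0$ and $b\in\{0,1\}$, the lone possible factor of $3$ coming only from the bottom step.

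The second step is to combine this with the fact that $\Gal[\Q(P)][\Q]$ is a finite cyclic quotient of $\Z_p^\times$. If $p\geq 5$, then $2^{a}3^{b}$ is prime to $p$, so the open subgroup $\Gal[\Qz][\Q(P)]$ of $\Z_p^\times$ has index prime to $p$ and therefore contains the pro-$p$ Sylow subgroup $1+p\Z_p$ of $\Z_p^\times$, whose fixed field in $\Qz$ is $\Qz[p]$; hence $\Q(P)\subseteq\Qz[p]$. If $p=3$, write $\Z_3^\times\cong\Z/2\Z\times\Z_3$, whose finite quotients are cyclic of order $3^{k}$ or $2\cdot 3^{k}$; comparing with $2^{a}3^{b}$ (where $b\leq 1$) forces $a\leq 1$ and $k=b\leq 1$, so $[\Q(P):\Q]$ divides $6=\phi(9)$. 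Since $\Qz/\Q$ is pro-cyclic, the subfield of $\Qz$ of degree $[\Q(P):\Q]$ lies inside the subfield of degree $6$, which is $\Qz[3^2]\subseteq\Qz[3^3]$, giving $\Q(P)\subseteq\Qz[3^3]$.

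The only point needing care is the base of the tower: Proposition \ref{pro:naj_enri_growth} is available only for points of order at least $4$, so the order-$2$ point $P_1$ must be handled separately, and this is exactly where the single possible factor of $3$ can appear. Beyond that bookkeeping I expect no real obstacle, since once $[\Q(P):\Q]$ is known to be $\{2,3\}$-smooth with $3$-part at most $3$, the structure of $\Z_p^\times$ forces the location of $\Q(P)$ at once; one could alternatively bound $n$ first via Lemma \ref{lem-j-k_isog} and Theorem \ref{teo:mazur_izogenije}, but this turns out to be unnecessary.
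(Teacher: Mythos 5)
Your argument is essentially the paper's own, spelled out in more detail. The paper likewise builds the tower $\Q(2^{n-1}P)\subseteq\cdots\subseteq\Q(P)$, notes $[\Q(2^{n-1}P):\Q]\in\{1,2,3\}$, applies Proposition~\ref{pro:naj_enri_growth} up the tower to get $[\Q(P):\Q]=2^a3^b$ with $b\le 1$, and then simply writes ``and the claim follows.'' You have usefully filled in that last step: since $\Gal[\Qz][\Q]\simeq\Z_p^\times$ is procyclic, the constraint $[\Q(P):\Q]=2^a3^b$ together with the structure $\Z_p^\times\simeq\Z/(p-1)\Z\times\Z_p$ forces $\Q(P)$ into the fixed field of the pro-$p$ part (i.e.\ $\Qz[p]$) for $p\ge 5$, and forces $a\le 1$ for $p=3$. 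Note that for $p=3$ your bookkeeping actually yields $[\Q(P):\Q]\mid 6$ and hence the stronger conclusion $\Q(P)\subseteq\Qz[3^2]$; the corollary's statement $\subseteq\Qz[3^3]$ is what the paper needs downstream, so the weaker form is stated, but your tighter bound is correct and consistent with it.
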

\begin{proof}
The point $2^{n-1}P$ is of order $2$ and we know that $[\Q(2^{n-1}P) : \Q] \in \{1, 2, 3\}$. Applying the Proposition \ref{pro:naj_enri_growth} $(n-1)$ times we conclude that $[\Q(P) : \Q(2^{n-1}P)] = 2^a$, for some $a \in \{0, 1, 2, \dotsc, 2n - 2\}$, from which it follows that $[\Q(P) : \Q] = 2^a3^b$, for some $a \in \{0, 1, 2, \dotsc, 2n - 1\}$, $b \in \{0, 1\}$ and the claim follows.
\end{proof}
\begin{lemma}\label{lem:sluaj_p_veci_od_13}
Let $E/\Q$ be an elliptic curve and let $p \geq 13$ be a prime number. Then we have
\[E(\Qz)_\tors = E(\Qz[p])_\tors.\]
\end{lemma}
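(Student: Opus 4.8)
The plan is to show that every torsion point of $E$ rational over $\Qz$ is already rational over $\Qz[p]$. The structural input is that $\Gal[\Qz][\Q]\cong\Z_p^{\times}\cong\Z/(p-1)\Z\times\Z_p$ is topologically cyclic (since $p$ is odd), so every finite subextension $L/\Q$ of $\Qz/\Q$ is cyclic with $[L:\Q]\mid p^{m}(p-1)$ for some $m$; moreover the pro-$p$ subgroup $1+p\Z_p$ of $\Gal[\Qz][\Q]$ fixes $\Qz[p]$ and is contained in every open subgroup of prime-to-$p$ index, so $\Qz[p]$ contains every subextension of $\Qz$ of degree prime to $p$. Hence it suffices to prove: for every $P\in E(\Qz)$ of prime-power order $\ell^{k}$ one has $p\nmid[\Q(P):\Q]$ (any torsion point being a sum of such). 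I would also note that $\Qz/\Q$ is ramified only at $p$, so $\mu_{\ell}\subseteq\Qz$ only for $\ell\in\{2,p\}$, and hence by Proposition~\ref{pro:weil} that $E[\ell]\not\subseteq E(\Qz)$ for every prime $\ell\notin\{2,p\}$.

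I would then split by $\ell$. For $\ell$ odd with $\ell\neq p$: since $E[\ell]\not\subseteq E(\Qz)$, the group $E(\Qz)[\ell^{\infty}]$ is cyclic, and, being the $\ell$-primary torsion over the Galois extension $\Qz/\Q$, it is stable under $\Gal[\Qbar][\Q]$, hence is the kernel of a rational cyclic $\ell^{k}$-isogeny, where $P$ is a generator. By Theorem~\ref{teo:mazur_izogenije} the degree $\ell^{k}$ is one of the values in Table~\ref{tablica:svi_moguci_n-ovi}, and Corollary~\ref{kor:izogenija_stupanj} gives $[\Q(P):\Q]\mid\phi(\ell^{k})$; inspecting the table shows that for every prime power $\ell^{k}$ listed there $\phi(\ell^{k})$ has no prime factor $\geq 13$, so $p\nmid[\Q(P):\Q]$. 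For $\ell=p$: if $E[p]\subseteq E(\Qz)$ then $\Q(E[p])/\Q$ would be abelian, impossible for $p\geq 13$ by Theorem~\ref{teo:gonloz_abel}; so $E(\Qz)[p^{\infty}]$ is again cyclic and stable under $\Gal[\Qbar][\Q]$, giving $E$ a rational cyclic $p^{k}$-isogeny, and $p^{k}\leq 163$ forces $k=1$, so $[\Q(P):\Q]\mid\phi(p)=p-1$ is prime to $p$. For $\ell=2$, where $E(\Qz)[2^{\infty}]$ need not be cyclic, I would instead take $P$ of order $2^{k}$, run the chain $\Q(P)\supseteq\Q(2P)\supseteq\dots\supseteq\Q(2^{k-1}P)$, apply Proposition~\ref{pro:naj_enri_growth} at each step to get $[\Q(P):\Q(2^{k-1}P)]\mid 4^{k-1}$, and use that $2^{k-1}P$ is a nonzero $2$-torsion point, so $[\Q(2^{k-1}P):\Q]\in\{1,2,3\}$; thus $[\Q(P):\Q]\mid 2^{2k-2}\cdot 3$, prime to $p$ since $p\geq 13$.

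Combining the three cases, every torsion point of $E(\Qz)$ generates over $\Q$ a field of degree prime to $p$, hence lies in $\Qz[p]$; as $E(\Qz[p])_{\tors}\subseteq E(\Qz)_{\tors}$ is automatic, this gives the claimed equality.

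The step I expect to be the crux — and the reason the hypothesis reads $p\geq 13$ rather than $p\geq 5$ — is the table inspection in the case $\ell\neq p$: the largest prime dividing $\phi(\ell^{k})$ for a prime-power isogeny degree $\ell^{k}$ is $11$ (from $\phi(67)=66$, and also $\phi(11)=10$, $\phi(25)=20$), so the argument genuinely fails for $p\in\{5,7,11\}$ and those primes need the separate treatment of the following lemmas; the bound $p\geq 13$ is exactly what clears this obstruction and simultaneously forces $k=1$ in the $\ell=p$ case.
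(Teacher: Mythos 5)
Your proof is correct and follows essentially the same route as the paper's: split by the prime $\ell$ dividing the order of a point, use Weil pairing to rule out full $\ell$-torsion for $\ell\notin\{2,p\}$, deduce a rational cyclic isogeny and apply the degree bound of Corollary~\ref{kor:izogenija_stupanj} together with the table of admissible isogeny degrees, handle $\ell=2$ via Proposition~\ref{pro:naj_enri_growth} (the paper packages this as Corollary~\ref{kor:naj_enri_growth}), and handle $\ell=p$ via Theorem~\ref{teo:gonloz_abel}. Your explicit reduction of the claim to showing $p\nmid[\Q(P):\Q]$, justified by the decomposition $\Gal[\Qz][\Q]\cong\Z/(p-1)\Z\times\Z_p$, and your spelled-out argument that $E(\Qz)[\ell^\infty]$ being cyclic and Galois-stable yields the isogeny, are just unpacked versions of what the paper cites (Lemma~\ref{lem-j-k_isog}), not a different method.
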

\begin{proof}
Let $q$ be a prime number different then $2$ and $p$. We know that $\Qz$ does not contain $\zeta_q$. By the Proposition \ref{pro:weil} we conclude that $E(\Qz)$ does not contain $E[q]$. Assume that $n$ is a natural number such that $E(\Qz)$ contains a point $P$ of order $q^n$. By the Lemma \ref{lem-j-k_isog} we see that $E$ has a rational $q^n$-isogeny. The Corollary \ref{kor:izogenija_stupanj} implies that $[\Q(P) : \Q] \mid \phi(q^n)$. By the Table \ref{tablica:svi_moguci_n-ovi} we see that $\Q(P) \subseteq \Qz[p]$. Let us now consider the case $q=2$. By the Corollary \ref{kor:naj_enri_growth} we see that the points of order $2^n$ contained in $E(\Qz)_\tors$ are defined over $\Qz[p]$. Finally, using the Corollary \ref{kor:gonloz_abel} we conclude that $E(\Qz)$ does not contain $E[p]$. Therefore, when a point $P \in E(\Qz)$ is of order $p^n$ the same argument applies as in the case when the order equals $q^n$. 
\end{proof}
\begin{lemma}\label{lem:sluaj_p_jednak_11}
Let $E/\Q$ be an elliptic curve. Then
\[E(\Qz[11][\infty])_\tors = E(\Qz[11])_\tors.\]

\end{lemma}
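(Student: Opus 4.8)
The plan is to mirror the strategy of Lemma \ref{lem:sluaj_p_veci_od_13}, handling each prime $q$ that can divide the order of a torsion point separately, with extra attention paid to the primes for which $\Q(\mu_{11^\infty})$ might contribute new torsion. First I would dispose of all primes $q \notin \{2, 11\}$: since $\Q(\mu_{11^\infty})$ contains no $\zeta_q$, Proposition \ref{pro:weil} rules out full $q$-torsion, so any point $P$ of order $q^n$ in $E(\Q(\mu_{11^\infty}))$ forces (via Lemma \ref{lem-j-k_isog}) a rational $q^n$-isogeny; Corollary \ref{kor:izogenija_stupanj} then gives $[\Q(P):\Q] \mid \phi(q^n)$. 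Consulting Table \ref{tablica:svi_moguci_n-ovi}, the only isogeny degrees $q^n$ with $q \neq 2, 11$ are $3, 5, 7, 9, 13, 25, 27$, and for each of these $\phi(q^n)$ has no factor of $11$, hence is coprime to $[\Q(\mu_{11^\infty}):\Q(\mu_{11})] = \varprojlim 11^k$; therefore $\Q(P) \subseteq \Q(\mu_{11})$, which lies inside $\Qz[11]$.

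Next I would handle $q = 2$: by Corollary \ref{kor:naj_enri_growth} (with $p = 11 \geq 5$), every point of $2$-power order in $E(\Q(\mu_{11^\infty}))$ is already defined over $\Qz[11]$. Then I would treat $q = 11$ itself. Here Corollary \ref{kor:gonloz_abel} shows $E[11] \not\subseteq E(\Q(\mu_{11^\infty}))$ — indeed $\Q(E[11])$ can only be contained in a $\Q(\mu_{p^\infty})$ when the relevant degree is at most $4$, which fails for $11$ — so any point $P$ of order $11^n$ yields a rational $11^n$-isogeny by Lemma \ref{lem-j-k_isog}. By Theorem \ref{teo:mazur_izogenije} the only possibility is $n = 1$, so $P$ has order exactly $11$, and Corollary \ref{kor:izogenija_stupanj} gives $[\Q(P) : \Q] \mid \phi(11) = 10$. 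Now $\Q(P) \subseteq \Q(\mu_{11^\infty})$ together with $[\Q(P):\Q] \mid 10$ forces $\Q(P) \subseteq \Q(\mu_{11})$ (the degree-$10$ part of the cyclotomic tower sits entirely inside $\Q(\mu_{11})$, since $\Gal(\Q(\mu_{11^\infty})/\Q) \cong \Z/10\Z \times \Z_{11}$ and a degree dividing $10$ meets the $\Z_{11}$-part trivially). Hence $P \in E(\Qz[11])$.

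Combining the three cases, every torsion point of $E(\Q(\mu_{11^\infty}))$ lies in $E(\Qz[11])$, so $E(\Q(\mu_{11^\infty}))_\tors = E(\Qz[11])_\tors$. The main obstacle I anticipate is the $q = 11$ case: one must be careful that Corollary \ref{kor:gonloz_abel} genuinely excludes $E[11] \subseteq E(\Q(\mu_{11^\infty}))$ and that no point of order $11^2$ can appear — this is exactly where Mazur's isogeny bound (Theorem \ref{teo:mazur_izogenije}) is essential, since $121$ is not an allowed isogeny degree. Everything else reduces to the bookkeeping already carried out in Lemma \ref{lem:sluaj_p_veci_od_13}, reading degrees off Table \ref{tablica:svi_moguci_n-ovi}.
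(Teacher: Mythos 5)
There is a genuine gap. When you read off from Table \ref{tablica:svi_moguci_n-ovi} the prime-power isogeny degrees $q^n$ with $q \notin \{2, 11\}$, you list only $\{3, 5, 7, 9, 13, 25, 27\}$, but the table also contains $17$, $19$, $37$, $43$, $67$, $163$. For most of the ones you omitted the coprimality argument goes through unchanged, since $\phi(17)=16$, $\phi(19)=18$, $\phi(37)=36$, $\phi(43)=42$, $\phi(163)=162$ are all coprime to $11$. However, $\phi(67) = 66 = 2\cdot 3\cdot 11$ is \emph{not} coprime to $11$, so the deduction ``$\Q(P) \subseteq \Q(\mu_{11})$'' fails there: a priori $[\Q(P):\Q]$ could be $11$ or $22$, which would place $P$ over $\Q_{1,11}$ rather than over $\Q(\mu_{11})$. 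This $n=67$ case is precisely what the paper's proof singles out and handles separately: a rational $67$-isogeny forces $j(E) = -2^{15}\cdot 3^3\cdot 5^3\cdot 11^3$, and (as recorded in Table \ref{tablica:polinomcici}) the $67$\textsuperscript{th} division polynomial of such a curve has a unique irreducible factor of degree $\leq 66$, of degree exactly $33$. Since $\Q(\mu_{11^\infty})$ has no subfield of degree $33$ over $\Q$ (finite subextensions have degree $d\cdot 11^k$ with $d\mid 10$, and $3\nmid 10$), no point of order $67$ can be defined over $\Q(\mu_{11^\infty})$. Your proposal needs this additional computation to close.

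The rest of your argument (the $q=2$ case via Corollary \ref{kor:naj_enri_growth}, the $q=11$ case via Corollary \ref{kor:gonloz_abel} together with Mazur's isogeny bound, and the coprimality reduction for the remaining $q$) matches the paper's strategy and is correct as written.
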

\begin{proof}

The proof is analogous to the proof of the previous Lemma, except in the case when $P \in E(\Qz[11][\infty])$ is of order $67$. In this case $E$ has a rational $67$-isogeny. This means that $j(E) = -2^{15} \cdot 3^3 \cdot 5^3 \cdot 11^3$. From the proof of the Lemma \ref{lem:lagani_prosti} we know that the $67$\textsuperscript{th} division polynomial of elliptic curves with this $j$-invariant has one irreducible factor of degree $\leq 66$ and that this factor, call is $\varphi$, is of degree $33$. This is impossible since $\Qz[11][\infty]$ does not contain a subfield of degree $33$ over $\Q$.
\end{proof}
\begin{lemma}\label{lem:sluaj_p_jednak_7}
Let $E/\Q$ be an elliptic curve. Then \[E(\Qz[7][\infty])_\tors = E(\Qz[7])_\tors.\]
\end{lemma}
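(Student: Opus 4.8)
The plan is to mimic the structure of the proof of Lemma \ref{lem:sluaj_p_jednak_11}, handling the prime $q=2$ and the odd primes $q \neq 7$ separately, and then the prime $q=7$ itself. For an odd prime $q \neq 7$, the field $\Qz[7][\infty] = \Qz[7]$ does not contain $\zeta_q$, so by Proposition \ref{pro:weil} the curve $E(\Qz[7])$ cannot contain $E[q]$; if it contains a point $P$ of order $q^n$, then by Lemma \ref{lem-j-k_isog} $E$ has a rational $q^n$-isogeny and by Corollary \ref{kor:izogenija_stupanj} we get $[\Q(P):\Q] \mid \phi(q^n)$. Using Table \ref{tablica:svi_moguci_n-ovi}, the only odd $q \neq 7$ that can possibly occur as a rational isogeny degree over $\Q$ are $q \in \{3,5,11,13\}$ (and prime powers $9, 25, 27$), and for each of these one checks that $v_7(\phi(q^n))$ is large enough to force $\Q(P) \subseteq \Qz[7]$ via Corollary \ref{kojepolje}. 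For $q=2$, Corollary \ref{kor:naj_enri_growth} directly gives $\Q(P) \subseteq \Qz[7]$ for any $2$-power order point $P \in E(\Qz[7])$. Finally, for $q=7$, Corollary \ref{kor:gonloz_abel} shows $E(\Qz[7])$ does not contain $E[7]$, so if $P$ has order $7^n$ then again $E$ has a rational $7^n$-isogeny, $[\Q(P):\Q] \mid \phi(7^n) = 6 \cdot 7^{n-1}$, and hence $\Q(P)$ sits inside the appropriate layer of $\Qz[7]$.

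The delicate points are the ``boxed'' primes from Table \ref{tablica:svi_moguci_n-ovi}, namely $q \in \{11,17,25,43,67,163\}$, where $v_7(\phi(q))$ or $v_7(\phi(q^n))$ could fail to be controlled. For $q = 43$ we have $\phi(43) = 2 \cdot 3 \cdot 7$, so a point of order $43$ would a priori need a degree-$7$ (or degree-$7 \cdot m$) field inside $\Qz[7]$; but $\Qz[7]$ does contain the degree-$7$ subfield, so this case is not automatically excluded and requires a closer look — exactly as the $q=67$ case required the explicit division-polynomial argument in Lemma \ref{lem:sluaj_p_jednak_11}. I expect the main obstacle to be precisely $q=43$: one must invoke Lemma \ref{lem:lagani_prosti} (and its Table \ref{tablica:polinomcici}), where the $43$rd division polynomial of the CM curve with $j = -2^{18}\cdot 3^3 \cdot 5^3$ is shown to have a single irreducible factor of degree $\leq 42$, and that factor has degree $21$, not $7$; since $\Qz[7][\infty]$ has no subfield of degree $21$ over $\Q$ (its only degree-divisible-by-$3$ subfields come from $\Q(\zeta_7)$, which has degree $6$, and $21 \nmid 6 \cdot 7^k$ in a way that embeds — more precisely $\Gal[\Qz[7]][\Q]$ has no quotient of order $21$), we obtain a contradiction. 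The primes $67$ and $163$ are handled identically via their degree-$33$ and degree-$81$ factors, neither of which can live in $\Qz[7]$; and $q = 11, 17, 25$ are killed because $\phi(11) = 2\cdot 5$, $\phi(17) = 2^4$, $\phi(25) = 2^2 \cdot 5$ have $7$-adic valuation zero while forcing a $5$-part or $2$-part too large for $\Qz[7]$, contradicting Corollary \ref{kojepolje} together with the fact that $\Qz[7]$ contains no quadratic or quartic subfield beyond what is compatible with $\Gal[\Qz[7]][\Q] \simeq \Z/6\Z \times \Z_7$.

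Once all cases $q \neq 7$ and $q = 7$ are disposed of, the conclusion $E(\Qz[7][\infty])_\tors = E(\Qz[7])_\tors$ follows by collecting the bounds: every prime-power-order torsion point of $E$ over $\Qz[7][\infty]$ is in fact already defined over the first layer $\Qz[7] = \Q(\mu_7)$-stage, i.e.\ over a field fixed by the pro-$7$ part of $\Gal[\Qz[7]][\Q]$. I would present this as a short paragraph-by-paragraph case analysis, explicitly deferring the CM-curve cases $q \in \{43,67,163\}$ to the division-polynomial computations already recorded in the proof of Lemma \ref{lem:lagani_prosti}, so that no new \verb|magma| computation is needed beyond what Table \ref{tablica:polinomcici} provides.
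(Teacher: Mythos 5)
Your overall structure follows the paper's approach: handle odd $q \neq 7$ via the isogeny degree $\phi(q^n)$ and Corollary \ref{kor:izogenija_stupanj}, handle $q=2$ via Corollary \ref{kor:naj_enri_growth}, handle $q=7$ via Corollary \ref{kor:gonloz_abel}, and then isolate the boxed primes from Table~\ref{tablica:svi_moguci_n-ovi} for special treatment. That is also exactly what the paper does (it simply points back to Lemma~\ref{lem:sluaj_p_jednak_11}, i.e.\ to the argument of Lemma~\ref{lem:sluaj_p_veci_od_13}, and then singles out $n=43$). Your handling of $q \in \{11,17,25\}$, and of $q \in \{67,163\}$ via degrees $33$ and $81$, is correct: $\Gal[\Qz[7][\infty]][\Q] \simeq \Z/6\Z \times \Z_7$ has no quotient of order $33 = 3 \cdot 11$ or $81 = 3^4$.

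However, your treatment of the critical case $q = 43$ is wrong, and it is precisely the case that makes this lemma nontrivial. You claim that ``$\Qz[7][\infty]$ has no subfield of degree $21$ over $\Q$,'' justified parenthetically by ``$\Gal[\Qz[7]][\Q]$ has no quotient of order $21$.'' You have conflated $\Qz[7] = \Q(\mu_7)$ (Galois group $\Z/6\Z$, which indeed has no quotient of order $21$) with $\Qz[7][\infty] = \Q(\mu_{7^\infty})$ (Galois group $\Z/6\Z \times \Z_7$). The latter does have a quotient of order $21$, via $\Z/6\Z \twoheadrightarrow \Z/3\Z$ and $\Z_7 \twoheadrightarrow \Z/7\Z$; concretely, the maximal real subfield $\Q(\zeta_{49})^+$ is a degree-$21$ subfield of $\Qz[7][\infty]$. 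So the degree count alone does not kill $q=43$, and the premise of your ``no new \verb|magma| computation is needed'' claim collapses. Moreover, the division-polynomial check performed in the proof of Lemma~\ref{lem:lagani_prosti} was over the degree-$21$ subfield of $\KK$, namely $\Q[1][3]\Q[1][7]$, which is a \emph{different} field from $\Q(\zeta_{49})^+$ (one is ramified at $3$, the other is not), so that computation cannot be reused as-is. The paper does perform a fresh check for $n=43$ here (``Using Table~\ref{tablica:svi_moguci_n-ovi} and \verb|magma| we can easily show that such a point does not exist''). You must either redo that computation over $\Q(\zeta_{49})^+$, or replace it with a conceptual argument --- for instance, noting that the relevant CM curve has conductor $43^2$, so $\Q(E[43])$ is ramified only at $43$, while $\Qz[7][\infty]$ is ramified only at $7$, forcing $\Q(E[43]) \cap \Qz[7][\infty] = \Q$ and hence $E(\Qz[7][\infty])[43] = E(\Q)[43] = \{0\}$. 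As written, your proof has a genuine gap in the one case that required care.
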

\begin{proof}
Let $P$ be a point of order $n$ in $E(\Qz[7][\infty])$. If $n \neq 43$, then the same argument as in the previous lemma applies. Let us consider the case when $n=43$. Using Table \ref{tablica:svi_moguci_n-ovi} and \verb|magma| \cite{magma} we can easily show that such a point does not exist. The proof is basically the same as in the previous Lemma \ref{lem:sluaj_p_jednak_11}.
\end{proof}

\begin{lemma}\label{lem:sluaj_p_jednak_5}
Let $E/\Q$ be an elliptic curve. Then
\[E(\Qz[5][\infty])_\tors = E(\Qz[5])_\tors.\]
\end{lemma}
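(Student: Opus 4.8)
The plan is to follow the same template used in the previous three lemmas (\ref{lem:sluaj_p_veci_od_13}, \ref{lem:sluaj_p_jednak_11}, \ref{lem:sluaj_p_jednak_7}), isolating the primes $q$ for which the cyclotomic tower $\Qz[5][\infty]$ genuinely permits extra torsion and handling those by explicit control of the relevant layers. Concretely, suppose $P \in E(\Qz[5][\infty])$ has order $q^m$ for a prime $q$. By Lemma \ref{lem-j-k_isog}, $E$ admits a rational $q^m$-isogeny, so by Corollary \ref{kor:izogenija_stupanj} we have $[\Q(P):\Q] \mid \phi(q^m)$; the value of $q^m$ is constrained by Table \ref{tablica:svi_moguci_n-ovi}. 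Since $\Gal[\Qz[5][\infty]][\Q] \simeq \Z/4\Z \times \Z_5$, the only prime dividing the degree of a finite subextension other than $5$ is $2$, and the $2$-part of any such degree is at most $4$. Reading off Table \ref{tablica:svi_moguci_n-ovi}, for every allowed $q^m$ with $q \neq 2, 5$ the value $\phi(q^m)$ already forces $\Q(P) \subseteq \Qz[5]$ — unless $\phi(q^m)$ carries a factor of $5$, which happens exactly for $q^m \in \{11, 25\}$ (the boxed entries). For $q = 2$ I would invoke Corollary \ref{kor:naj_enri_growth}, which already places points of $2$-power order inside $\Qz[5]$; and for $q = 5$ a point of order $5^m$ together with Corollary \ref{kor:gonloz_abel} rules out $E[5] \subseteq E(\Qz[5][\infty])$, after which the isogeny argument reduces $\Q(P)$ to $\Qz[5]$ exactly as in Lemma \ref{lem:sluaj_p_veci_od_13}.

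This leaves the two genuinely delicate cases $q^m = 11$ and $q^m = 25$. For $q^m = 11$: if $P$ has order $11$ and $\Q(P) \not\subseteq \Qz[5]$, then $[\Q(P):\Q]$ must be divisible by $5$ (and divides $\phi(11) = 10$), so $[\Q(P):\Q] \in \{5, 10\}$, forcing $\Q(P)$ to contain $\Q[1][5]$, the unique degree-$5$ subfield. But $E$ then has a rational $11$-isogeny, so $j(E)$ is one of the three CM values listed in Table \ref{tablica:j_invarijante} for $p = 11$; from the proof of Lemma \ref{lem:lagani_prosti} the $11$th division polynomial of each such curve has a unique irreducible factor of degree $\leq 10$, and that factor has degree $5$. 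I would check with \verb|magma| that this degree-$5$ polynomial has no root in $\Q[1][5] = \Q(\zeta_{11})^+$ — equivalently, that it does not split off a linear factor over that field (this is exactly the computation performed in the earlier lemmas, just with $\Q[1][5]$ replacing the relevant cyclotomic layer). For $q^m = 25$: a point of order $25$ gives a rational $25$-isogeny, which is handled by essentially the argument of Lemma \ref{lem:kompozitum_25} — but in fact the situation here is simpler, since $\Q(5P)$ has order $5$ (as $\phi(5) = 4$ has no $5$-part only if $\Q(5P) \subseteq \Qz[5]$, wait — one must note $[\Q(5P):\Q] \mid 4$ so $\Q(5P) \subseteq \Qz[5]$, while $[\Q(P):\Q] \mid \phi(25) = 20$ may be divisible by $5$), so I would run the same matrix-group/commutator analysis as in Lemma \ref{lem:kompozitum_25} to derive a contradiction with Greenberg's theorem \cite{greenberg}, or alternatively show directly via Zywina's parametrizations that no $E/\Q$ has a point of order $25$ over $\Q[1][5]$ while having $2$-isogeny structure compatible with $G_\Q(2)$ being abelian.

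The main obstacle is the $q^m = 25$ case: unlike the division-polynomial arguments, which are essentially a finite \verb|magma| check once the right layer is identified, here one needs either the explicit image-of-Galois bookkeeping from Lemma \ref{lem:kompozitum_25} adapted to the field $\Q[1][5]$ rather than $\Q[2][2]\Q[1][5]$, or a clean modular-curve computation. I expect the cleanest route is to observe that a point of order $25$ in $E(\Qz[5][\infty])$ would, after passing to $\Q(5P) \subseteq \Qz[5]$, force $G_{\Q[1][5]}(25)$ into a Borel with upper-left entry in $1 + 5\Z/25\Z$, and then — exactly as in Lemma \ref{lem:kompozitum_25} — the index of $G_\Q(25)$ in $\GL_2(\Z/25\Z)$ is divisible by $25$, contradicting \cite[Theorem 2]{greenberg}. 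The $q^m = 11$ case should go through by the same \verb|magma| verification already performed for the compositum $\KK$ in Lemma \ref{lem:lagani_prosti}, restricted to the single field $\Q(\zeta_{11})^+$.
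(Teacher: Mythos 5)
Your plan matches the paper's proof essentially step for step: the same reduction via Table \ref{tablica:svi_moguci_n-ovi} and the isogeny-degree bound, Corollaries \ref{kor:naj_enri_growth} and \ref{kor:gonloz_abel} for the $q=2$ and $q=5$ cases, and the same two ``boxed'' cases $q^m = 11$ and $q^m = 25$ requiring extra work. For $q^m = 11$ the paper does exactly what you propose, namely reusing the division-polynomial computation from Lemma \ref{lem:lagani_prosti}, and for $q^m = 25$ it runs the same Galois-image index computation as in Lemma \ref{lem:kompozitum_25}, ending with the contradiction $25 \mid 150 \mid [\GL_2(\Z/25\Z) : G_\Q(25)]$ against \cite[Theorem 2]{greenberg}.

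There is, however, one concrete error you should fix. You write $\Q[1][5] = \Q(\zeta_{11})^+$. This is false: $\Q[1][5]$ is by definition the first layer of the $\Z_5$-extension, i.e.\ $\Q(\zeta_{25})^\Delta$, the unique degree-$5$ subfield of $\Q(\zeta_{25})$, whereas $\Q(\zeta_{11})^+$ is the degree-$5$ subfield of $\Q(\zeta_{11})$. Both are cyclic of degree $5$ over $\Q$, but they are ramified at different primes ($5$ versus $11$), so they are genuinely different fields, and a \verb|magma| check over $\Q(\zeta_{11})^+$ would not tell you anything about roots over $\Q[1][5]$. The relevant field for the computation is $\Q[1][5]$, and the check from Lemma \ref{lem:lagani_prosti} does apply because the degree-$5$ subfield of $\KK$ and the degree-$5$ subfield of $\Qz[5][\infty]$ are the same field $\Q[1][5]$. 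A smaller slip: in your $q^m = 25$ sketch you write ``$G_{\Q[1][5]}(25)$,'' but since $\Q(5P)$ lands in the unique degree-$\leq 4$ subfield of $\Qz[5][\infty]$, the group constrained to the congruence Borel is $G_{\Qz[5]}(25)$, i.e.\ the mod-$25$ image over $\Q(\zeta_5)$, not over $\Q[1][5]$; this is the version the paper actually uses, and the index bookkeeping depends on it.
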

\begin{proof}
If $E(\Qz[5][\infty])$ contains a point of order $11$, then $E(\Qz[5][\infty])_\tors=\Z/11\Z$. Therefore, $E$ has a rational $11$-isogeny. By Table \ref{tablica:polinomcici} we see that the $11$\textsuperscript{th} division polynomial of elliptic curve $E/\Q$ with a rational $11$-isogeny has a unique irreducible factor of degree $\leq 10$ and that factor is of degree $5$. In the proof of Lemma \ref{lem:lagani_prosti} we have shown that such irreducible factor does not have zeroes over the unique subfield $\Q[1][5]$ of $\Qz[5][\infty]$ of degree $5$ over $\Q$. It remains to consider the case when $E(\Qz[5][\infty])$ contains a point of order $25$. Assume that $P \in E(\Qz[5][\infty])$ is a point of order $25$. We can easily see that $E(\Qz[5][\infty])_\tors \simeq \Z/25\Z$ and so $E$ has a rational $25$-isogeny. This means that $\Q(P) \subseteq \Qz[25]$. Assume that $\Q(P) \nsubseteq \Qz[5]$ (otherwise, we're immediately done). By the analogous argument we conclude that $\Q(5P) \subseteq \Qz[5]$. Since every $\sigma \in \Gal[\Q(E[25])][\Qz[5]]$ fixes a point $5P$, we conclude that $G_{\Qz[5]}(25)$ is (up to conjugacy) of the form  \[\left\lbrace\begin{pmatrix}a & \ast \\ 0 & \ast\end{pmatrix} : a \in 1 + 5\Z/25\Z\right\rbrace.\] By the \cite[Lemma 3.4.]{CDKN} we have $\det G_{\Qz[25]}(25) \simeq \{1\}$. Recall that the group $\Gal[\Qbar][\Qz[25]]$ fixes point $P$ and so \[G_{\Qz[25]}(25) \leq \left\lbrace\begin{pmatrix}1 & \ast \\ 0 & 1\end{pmatrix}\right\rbrace.\] Notice that $\Gal[\Q(E[25])][\Qz[5]]$ does not fix the point $P$ and that \[[G_{\Qz[5]}(25) : G_{\Qz[25]}(25)] = [\Qz[25] : \Qz[5]] = 5.\] Therefore we have \[G_{\Qz[5]}(25) \leq \left\lbrace\begin{pmatrix}a & \ast \\ 0 & 1\end{pmatrix} : a \in 1 + 5\Z/25\Z\right\rbrace.\] We also have $[G_{\Q}(25) : G_{\Qz[5]}(25)] = [\Qz[5] : \Q] = 4$ and \[[(\Z/25\Z)^\times : 1 + 5\Z/25\Z] = 4 \qquad \text{i} \qquad [\diam{7} : \{1\}] = 4.\] Finally we conclude that \[G_{\Q}(25) \leq \left\lbrace\begin{pmatrix}a & \ast \\ 0 & b\end{pmatrix} : a \in (\Z/25\Z)^\times,\ b \in \{7, -1, -7, 1\}\right\rbrace\] and we have \[25 \mid 150 \mid [\GL_2(\Z/25\Z) : G_{\Q}(25)] \mid [\Aut_{\Z_5}(T_5(E)) : \Img(\overline{\rho}_{5, E})],\] which contradicts the \cite[Theorem 2]{greenberg}.
\end{proof}
\begin{lemma}\label{lem:sluaj_p_jednak_3}
Let $E/\Q$ be an elliptic curve. Then \[E(\Qz[3][\infty])_\tors = E(\Qz[3][3])_\tors.\]
\end{lemma}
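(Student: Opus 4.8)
The plan is to mirror the structure of the proofs of Lemmas~\ref{lem:sluaj_p_jednak_11}--\ref{lem:sluaj_p_jednak_5}, but now the ambient field $\Qz[3][\infty]$ has layers of $3$-power degree over $\Q$, so the primes that cause trouble are exactly those $n$ in Table~\ref{tablica:svi_moguci_n-ovi} with $\phi(n)$ divisible by a high power of $3$ or by primes that can appear inside $\Qz[3][\infty]$ only after many layers. Concretely, I would first dispose of the ``easy'' part: for a point $P\in E(\Qz[3][\infty])$ of prime-power order $q^k$ with $q\neq 3$, Proposition~\ref{pro:weil} forbids the full $E[q]$ (since $\zeta_q\notin\Qz[3][\infty]$ for $q\neq 3$), hence by Lemma~\ref{lem-j-k_isog} the curve $E$ has a rational $q^k$-isogeny with kernel $\langle P\rangle$, and Corollary~\ref{kor:izogenija_stupanj} gives $[\Q(P):\Q]\mid\phi(q^k)$. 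Since $\Gal[\Qz[3][\infty]][\Q]\simeq\Z_3$, the only finite subextensions are the layers $\Qz[3^{m}]$ of degree $3^{m-1}$, so $\Q(P)\subseteq\Qz[3][\infty]$ forces $[\Q(P):\Q]$ to be a power of $3$; combined with $[\Q(P):\Q]\mid\phi(q^k)$ and Table~\ref{tablica:svi_moguci_n-ovi} this pins down $\Q(P)$ to lie in a small fixed layer. For the $2$-power case I would use Corollary~\ref{kor:naj_enri_growth} (specialized to $p=3$) to conclude directly that such points live in $\Qz[3^3]$, and for points of $3$-power order I would use Corollary~\ref{kor:gonloz_abel} to rule out $E[3^k]\subseteq E(\Qz[3][\infty])$ for $k$ large and then argue as in the $q^k$ case.

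The remaining work is to go through the boxed values of $n$ in Table~\ref{tablica:svi_moguci_n-ovi} for which $\phi(n)$ contains a factor of $3$: namely $n\in\{7,9,13,14,18,19,21,25,27,37,43,67,163\}$, and to check in each case either that $\phi(n)$ is not a power of $3$ (so the isogeny argument already shows $P\notin E(\Qz[3][\infty])$ unless $[\Q(P):\Q]$ is a very small power of $3$, which is then handled by Theorem~\ref{teo:p_jednak_3}), or that the relevant irreducible factor of the $n$th division polynomial does not acquire a root in the appropriate layer $\Qz[3^m]$. The genuinely delicate cases are $n=27$ (where $\phi(27)=2\cdot 3^2$, so the point of order $27$ could a priori live as deep as the degree-$9$ layer $\Qz[3^3]$) and $n=13$ (where a point of order $13$ can appear in a cubic field, hence in $\Qz[3^2]$), as well as the $n=25$ case, which I would handle by the same Galois-image/Greenberg-bound computation used in Lemma~\ref{lem:sluaj_p_jednak_5}, replacing $\Q[1][5]$-arguments by the corresponding ones over $\Qz[3]$. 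For $n=163$ I expect to reuse verbatim the division-polynomial argument from Lemma~\ref{lem:lagani_prosti}: the unique low-degree factor has degree $81$, and $\Qz[3][\infty]$ does contain the degree-$81$ layer $\Qz[3^5]$, so here one must actually verify in \verb|magma| that $\varphi$ does not split appropriately over $\Qz[3^4]$ (or directly that it has no root over $\Qz[3^5]$).

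Having shown that no point of order $n$ with $n$ one of the troublesome values lies in $E(\Qz[3][\infty])$ unless it already lies in $\Qz[3^3]$, and having bounded all prime-power torsion into $\Qz[3^3]$ via the above, I would conclude that every torsion point of $E$ defined over $\Qz[3][\infty]$ is in fact defined over $\Qz[3^3]$, which is precisely $E(\Qz[3][\infty])_\tors=E(\Qz[3][3])_\tors$ (note $\Qz[3][3]=\Qz[3^3]$ by the definitions in the introduction). The main obstacle I anticipate is the value $n=27$: unlike the $p\geq 5$ cases, where $\phi(n)$ coprimality immediately collapses the tower, here $3\mid\phi(27)$ genuinely allows growth up to the third layer, so one has to show growth stops exactly there. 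This is where the explicit example \verb|27a4| with $E(\Qz[3^2])_\tors=\Z/9\Z\subsetneq\Z/27\Z=E(\Qz[3^3])_\tors$ shows the bound is sharp, and the proof that it cannot go further presumably comes from a division-polynomial computation (the $27$th division polynomial of a curve with rational $27$-isogeny has an irreducible factor of degree dividing $\phi(27)=18$ but, being a $3$-power-degree constraint inside $\Qz[3][\infty]$, of degree a power of $3$, hence degree $1,3$ or $9$, and one checks the degree-$9$ layer $\Qz[3^3]$ is reached but $\Qz[3^4]$ is not).
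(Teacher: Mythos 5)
Your overall strategy (isogeny/degree bound via Lemma \ref{lem-j-k_isog} and Corollary \ref{kor:izogenija_stupanj} for points of order $q^k$, Corollary \ref{kor:naj_enri_growth} for the $2$-power part, Corollary \ref{kor:gonloz_abel} for full level structure, and a division-polynomial check for $163$) matches the shape of the paper's proof, but there are two real problems.

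First, you have mixed up $\Q[3]$ (the cyclotomic $\Z_3$-extension) with $\Qz[3][\infty]=\Q(\mu_{3^\infty})$. The Galois group of the latter over $\Q$ is $\Z_3^\times\simeq\Z/2\Z\times\Z_3$, not $\Z_3$, and $[\Qz[3^m]:\Q]=\phi(3^m)=2\cdot 3^{m-1}$, not $3^{m-1}$. Consequently finite subextensions have degree $3^a$ \emph{or} $2\cdot 3^a$, not merely a power of $3$, and $\Qz[3^5]$ has degree $162$, not $81$; the unique degree-$81$ subfield is $\Q[4][3]$, which is precisely the field the paper tests the degree-$81$ factor of the $163$rd division polynomial against (reusing the computation from Lemma \ref{lem:lagani_prosti}). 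Your $n=27$ worry is also unfounded: since $\phi(27)=18=[\Qz[3^3]:\Q]$, a rational $27$-isogeny already places the point of order $27$ inside $\Qz[3^3]$ by the isogeny argument alone, with no division-polynomial work needed.

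Second, and more seriously, your plan for the $3$-power torsion is incomplete. After ruling out $E[9]\subseteq E(\Qz[3][\infty])$ via Corollary \ref{kor:gonloz_abel}, the possible $3$-primary subgroups are $\Z/3^n\Z$ ($n\leq 3$), $\Z/3\Z\oplus\Z/3\Z$, and $\Z/3\Z\oplus\Z/9\Z$. For the cyclic cases the isogeny argument works exactly as you say, and $\Z/3\Z\oplus\Z/3\Z$ is covered by Corollary \ref{kor:gonloz_abel}. But for $\Z/3\Z\oplus\Z/9\Z$ your ``argue as in the $q^k$ case'' does not go through: here $k=1$ in Lemma \ref{lem-j-k_isog}, so you only get a rational $3$-isogeny, not a $9$-isogeny, and Corollary \ref{kor:izogenija_stupanj} gives no control over where the point of order $9$ lives. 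The paper closes this gap with a separate argument: Theorem \ref{teo:gonloz_abel} forces $\Q(E[3])=\Qz[3]$, and then \cite[Proposition 4.6]{gn} bounds $[\Q(Q):\Q(3Q)]$ by a divisor of $9$ or of $6$, which together with the constraint that the degree lies in $\{3^a, 2\cdot 3^a\}$ pins $\Q(P,Q)$ inside $\Qz[3^3]$. You need to add this case to your proof.
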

\begin{proof}
If $q \neq 3$ is a prime number and $n$ is a natural number such that $E(\Qz[3][\infty])$ contains a point $P$ of order $q^n$, then as in the proofs of the previous lemmas we see that $\Q(P) \subseteq \Qz[3][3]$, except when $q=163$ and $n=1$. Assume that $P \in E(\Qz[3][\infty])$ is the point of order $163$. We know that the only possibility is $E(\Qz[3][\infty])_\tors \simeq \Z/163\Z$, which implies that $E$ has a rational $163$-isogeny. Furthermore, we have $\Q(P) \subseteq \Qz[3][5]$. By Table \ref{tablica:polinomcici} we see that the $163$\textsuperscript{rd} division polynomial of $E$ has only one irreducible factor of degree $\leq 162$ and that such factor (call it $\psi$) has degree $81$. This means that the polynomial $\psi$ needs to have zeroes defined over the field $\Qz[3][5]$. More precisely, it needs to have zeroes defined over a number field of degree $81$ that is contained in $\Qz[3][5]$. Such a field is unique and it is equal to $\Q[4][3]$. In the proof of the Lemma \ref{lem:lagani_prosti} we have shown that this is in fact false. Therefore we have arrived at the contradiction. We conclude that $E(\Qz[3][\infty])$ can't contain a point of order $163$.
It remains to consider the points of order $3^n$. We see that $E(\Qz[3][\infty])[3^\infty]$ is isomorphic to one of the following groups: \[\Z/3\Z, \quad \Z/9\Z, \quad \Z/27\Z, \quad \Z/3\Z \oplus \Z/3\Z, \quad \Z/3\Z \oplus \Z/9\Z.\] If $E(\Qz[3][\infty])[3^\infty] \simeq \Z/3^n\Z$, for some $n \in \{1, 2, 3\}$, then we know that $E$ has a rational $3^n$-isogeny and the claim of the Lemma follows immediately by the Table \ref{tablica:svi_moguci_n-ovi}. The case when $E(\Qz[3][\infty])[3^\infty] \simeq \Z/3\Z \oplus \Z/3\Z$ follows from the Corollary \ref{kor:gonloz_abel}. Let us check the remaining possibility, when $E(\Qz[3][\infty])[3^\infty] \simeq \Z/3\Z \oplus \Z/9\Z$. Let $\{ P, Q\}$ be the basis for $E(\Qz[3][\infty])[3^\infty]$. The point $P$ is of order $3$ and the point $Q$ is of order $9$. By the Theorem \ref{teo:gonloz_abel}, we know that $\Q(P) = \Q(3Q) = \Qz[3]$. Proposition \cite[Proposition 4.6.]{gn} implies that \[[\Q(Q) : \Q(3Q)] \in \{1, 2, 3, 6, 9\},\] which shows that $\Q(P, Q) \subseteq \Qz[3][3]$.
\end{proof}
Before proving that $E(\Qz[2][\infty])_\tors = E(\Qz[2][4])_\tors$, we shall state the two techical results we shall need.
\begin{lemma}\label{lem:sluaj_p_jednak_2_4+8}
Let $E/\Q$ be an elliptic curve such that $E(\Qz[2][\infty])[2^\infty] \simeq \Z/4\Z \oplus \Z/8\Z$. Then
\[E(\Qz[2][4])[2^\infty] \simeq \Z/4\Z \oplus \Z/8\Z.\]
\end{lemma}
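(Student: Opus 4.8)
The plan is to exploit the Galois structure of $\Qz[2][\infty]/\Q$ together with the $2$-adic isogeny/growth constraints we have already assembled. Suppose $E(\Qz[2][\infty])[2^\infty]\simeq\Z/4\Z\oplus\Z/8\Z$, and fix a basis $\{P,Q\}$ with $P$ of order $4$ and $Q$ of order $8$, so that $E[4]=\langle 2P,2Q\rangle\subseteq E(\Qz[2][\infty])$ and $\langle Q\rangle$ (equivalently $\langle 4P,4Q\rangle$-complement) gives a cyclic $2$-power isogeny structure. First I would note that since $E[4]\subseteq E(\Qz[2][\infty])\subseteq\Qz$, Corollary~\ref{kor:gonloz_abel} forces $\Q(E[4])\subseteq\Qz[2][4]$; in particular $\Q(E[2])$ and indeed all of $E[4]$ is already defined over $\Qz[2][4]$. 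So the only thing left to control is how far up the $2$-tower one must go to pick up the point $Q$ of order $8$.

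The key step is a descent along $Q\mapsto 2Q\mapsto 4Q$. The point $4Q$ has order $2$, hence lies in $E[2]\subseteq E(\Q(E[2]))\subseteq\Qz[2][4]$, so $\Q(4Q)\subseteq\Qz[2][4]$. Since $2Q\in E[4]$, we likewise have $\Q(2Q)\subseteq\Q(E[4])\subseteq\Qz[2][4]$. It therefore suffices to bound $[\Q(Q):\Q(2Q)]$. By Proposition~\ref{pro:naj_enri_growth} (applied with $2^{n+1}=8$, i.e.\ to the point $Q$ of order $2^3$ over $2Q$ of order $2^2$), we get $[\Q(Q):\Q(2Q)]\mid 4$. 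Because $\Qz[2][\infty]/\Q$ is pro-$2$ and abelian, every finite subextension is a $2$-extension, and the degree-$2^k$ subfield of $\Qz[2][\infty]$ containing a given subfield of $2$-power degree is unique. Concretely, $\Q(Q)$ is an abelian $2$-extension of $\Q$ contained in $\Qz[2][\infty]$, with $\Q(Q)\supseteq\Q(2Q)\supseteq\Q(4Q)$ and each successive degree dividing $4$, so $[\Q(Q):\Q]\mid 2^{a}$ for a small explicit $a$; tracking the layers, $\Q(2Q)\subseteq\Qz[2][4]$ together with $[\Q(Q):\Q(2Q)]\mid 4$ places $\Q(Q)$ inside $\Qz[2][6]$ a priori, and I would tighten this to $\Qz[2][4]$ using that $E$ has a rational $2$-isogeny (Lemma~\ref{lem-j-k_isog}) so that $G_{\Q}(2)$ is conjugate to $G_1$ or $G_2$ in the notation of Lemma~\ref{lem:kompozitum_10}, which already pins the $2$-torsion into $\Qz[2][2]$ and trims one factor of $2$ from the Najman growth bound; the remaining growth $2Q\to Q$ of degree dividing $4$ then lands in $\Qz[2][4]$.

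The main obstacle is exactly this last quantitative point: a priori Proposition~\ref{pro:naj_enri_growth} only gives $[\Q(Q):\Q(4Q)]\mid 16$, which would put $Q$ in $\Qz[2][6]$ rather than $\Qz[2][4]$, so the naive estimate is off by two layers. To close the gap I would argue with the mod-$16$ image $G_{\Q}(16)$: since $E$ has a rational $2$-isogeny and $E[4]$ is fully rational over the small abelian field $\Qz[2][4]$, the image $G_{\Q}(16)$ is severely constrained (it stabilizes a line and acts trivially on $E[4]$ after base change to $\Qz[2][4]$), and a direct group-theoretic analysis of the possible such subgroups of $\GL_2(\Z/16\Z)$ — comparing the index $[G_{\Q}(16):G_{\Qz[2][4]}(16)]=[\Qz[2][4]:\Q]=8$ against the possible orbit sizes of a point of order $8$ whose double is fixed — shows the orbit of $Q$ under $\Gal(\Q(E[16])/\Qz[2][4])$ has size dividing $2$, hence $[\Q(Q):\Q(2Q)]\mid 2$ over $\Qz[2][4]$ and $\Q(Q)\subseteq\Qz[2][4]$. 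This is the same flavor of matrix-group computation used in Lemma~\ref{lem:kompozitum_25} and Lemma~\ref{lem:sluaj_p_jednak_5}, and it can be discharged either by hand or with a short \verb|magma| computation over the finitely many relevant $2$-adic images classified in \cite{siksek,zywina}.
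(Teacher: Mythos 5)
Your first two steps match the paper: by Theorem~\ref{teo:gonloz_abel} applied to the abelian extension $\Q(E[4])=\Q(P,2Q)$, one gets $\Q(E[4])\subseteq\Qz[2^3]$, and Proposition~\ref{pro:naj_enri_growth} then pushes $\Q(P,Q)$ into $\Qz[2^5]$. (Incidentally, Corollary~\ref{kor:gonloz_abel} is stated only for $p\geq 3$, so you cannot cite it for $p=2$ as you do; but Theorem~\ref{teo:gonloz_abel} gives the needed containment directly, and the paper's bound $\Qz[2^3]$ is actually sharper than the $\Qz[2^4]$ you claim. Also, your claim that the degree-$2^k$ subfield of $\Qz[2][\infty]$ containing a given $2$-power-degree subfield is unique is false in general: $\Gal[\Qz[2][\infty]][\Q]\simeq \Z/2\Z\oplus\Z_2$ is not procyclic, and e.g.\ $\Q$ sits inside the three distinct quadratic fields $\Q(i),\Q(\sqrt2),\Q(\sqrt{-2})$.)

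The genuine gap is the last step, from $\Qz[2^5]$ down to $\Qz[2^4]$. Your plan is to constrain $G_{\Q}(16)$ and conclude that the orbit of $Q$ under $\Gal(\Q(E[16])/\Qz[2^4])$ ``has size dividing $2$, hence $\Q(Q)\subseteq\Qz[2^4]$.'' That inference is wrong: orbit size dividing $2$ allows orbit size exactly $2$, in which case $[\Qz[2^4](Q):\Qz[2^4]]=2$ and, since the unique quadratic extension of $\Qz[2^4]$ inside $\Qz[2][\infty]$ is $\Qz[2^5]$, this is precisely the situation you need to exclude, not a special case of the conclusion. You would need to prove the orbit has size exactly $1$, and you neither carry out the group-theoretic analysis nor indicate why it would force triviality rather than mere boundedness. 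One-level growth of $2$-power torsion between adjacent layers does occur (the paper's remark gives $32a4$, where $\Z/2\Z\oplus\Z/4\Z$ over $\Qz[2^3]$ grows to $\Z/2\Z\oplus\Z/8\Z$ over $\Qz[2^4]$), so this is not a triviality that an a priori index argument dispatches. The paper closes the gap by a completely different device: it observes that the modular curve $X_1(4,8)$ is the rank-$0$ elliptic curve $32a2$, decomposes its rank over $\Qz[2^5]=\Q[3][2]\cdot\Q(i)$ via the quadratic twist by $-1$, and verifies in \verb|magma| that $X_1(4,8)$ acquires no new points passing from $\Qz[2^4]$ to $\Qz[2^5]$. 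As written your argument does not prove the lemma, and even the proposed route would require a substantive additional computation that you have not supplied.
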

\begin{proof}
Let $P$ and $Q$ be a points in $E(\Qz[2][\infty])$ of orders $4$ and $8$ respectively that generate $\Z/4\Z \oplus \Z/8\Z$ torsion subgroup. We need to show that $\Q(P, Q) \subseteq \Q(\zeta_{2^4})$. By the Theorem \ref{teo:gonloz_abel} we see that $\Q(P, 2Q) \subseteq \Q(\zeta_{2^3})$. Using the Proposition \ref{pro:naj_enri_growth} we can see that $\Q(P, Q) \subseteq \Q(\zeta_{2^5})$. In \cite{derdrew} M.\ Derickx and A.\ V.\ Sutherland have computed the models for a certain number of modular curves $X_1(m, mn)$. This list can be found in \url{http://math.mit.edu/~drew/X1mn.html}. We observe that $X_1(4, 8)$ is actually an elliptic curve \href{http://www.lmfdb.org/EllipticCurve/Q/32a2}{32a2} (data \cite{lmfdb}):\[E'\ :\ y^2 = x^3 - x.\] We want to show that $r(E'(\Qz[2][5])) = r(E(\Qz[2][4])) = 0$ and $E'(\Qz[2][5])_\tors = E(\Qz[2][4])_\tors$. We shall use the fact that \[r(E'(\Qz[2][5])) = r(E'(\Q[3][2])) + r(E'^{(-1)}(\Q[3][2])).\] This is easily seen to be true because $\Qz[2][5] = \Q[3][2]\Q(i)$. Computation in \verb|magma| \cite{magma} shows that this claim holds.
\end{proof}
\begin{lemma}\label{lem:slucaj_p_jednak_2_CM}
Let $E/\Q$ be an elliptic curve with CM. Then $E$ does not contain a point of order $16$ defined over $\Qz[2][\infty]$.
\end{lemma}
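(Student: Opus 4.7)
Suppose for contradiction that $E/\Q$ has CM and that $P \in E(\Qz[2][\infty])$ has order $16$. The plan is to exhaust the thirteen possible CM $j$-invariants of elliptic curves over $\Q$, corresponding to orders of discriminants $D \in \{-3,-4,-7,-8,-11,-12,-16,-19,-27,-28,-43,-67,-163\}$, splitting them according to the behaviour of $2$ in the relevant CM order.

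For the seven discriminants $D \in \{-3,-11,-19,-27,-43,-67,-163\}$, where $2$ is inert in the maximal order of the CM field and coprime to the conductor, CM theory shows that the image of the mod-$2$ Galois representation $\rho_{E,2}$ is contained in the normalizer of a non-split Cartan subgroup of $\GL_2(\F_2)$, so the image acts without fixed points on $E[2]\setminus\{0\}$. Since $\Q(E[2])$ is then a non-abelian extension of $\Q$ (with Galois group $\Z/3\Z$ or $S_3$) while $\Qz[2][\infty]$ is abelian, the intersection $\Q(E[2])\cap \Qz[2][\infty]$ has degree at most $2$ over $\Q$, so the image of $\Gal[\Qbar][\Qz[2][\infty]]$ in $\rho_{E,2}(G_\Q)$ still contains a $3$-cycle and continues to act without fixed points on the nonzero $2$-torsion. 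Hence $E(\Qz[2][\infty])[2]=\{O\}$, contradicting the fact that $8P$ has order $2$ in $E(\Qz[2][\infty])$.

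For the six remaining discriminants $D \in \{-4,-7,-8,-12,-16,-28\}$, the mod-$2$ representation is reducible and I would proceed by explicit computation. All CM curves over $\Q$ with a given $j$-invariant are quadratic twists of a fixed representative $E_D/\Q$, and twisting by $d$ preserves the field generated by the $x$-coordinate of a putative point of order $16$ while changing its $y$-coordinate by at most $\sqrt{d}$. Since $\sqrt{d}\in \Qz[2][\infty]$ forces $d\in\{\pm 1,\pm 2\}$ modulo squares, only a short list of explicit curves needs to be examined. For each of these I would compute the $16$th division polynomial in \verb|magma| \cite{magma}, enumerate the finitely many subfields of $\Qz[2][\infty]$ of degree at most $8$ (namely $\Q$, the three quadratic fields $\Q(i),\Q(\sqrt 2),\Q(\sqrt{-2})$, and their finitely many extensions inside $\Q(\zeta_{16})$), and verify that no irreducible factor of $\psi_{16}$ produces an $x$-coordinate in such a field combined with a $y$-coordinate lying in the admissible twist field.

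The principal obstacle is the case $D=-7$: here $2$ splits in $\Z[(1+\sqrt{-7})/2]$, so the corresponding CM curves admit rational $2^n$-isogenies for every $n$ by CM theory, and the irreducibility argument used above is unavailable. In this case the contradiction must come entirely from inspection of the division polynomial and its factorisation pattern in the $2$-power cyclotomic tower, which requires checking for each relevant twist that the specific irreducible factor of $\psi_{16}$ of degree dividing $\phi(16)=8$ has no root in the corresponding degree-$2^k$ subfield of $\Qz[2][\infty]$.
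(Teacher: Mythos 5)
There is a genuine gap, in fact two. First, your ``inert'' case does not work as stated. In $\GL_2(\F_2)\simeq S_3$ the normalizer of a non-split Cartan subgroup is the whole group, so ``the mod-$2$ image lies in this normalizer'' carries no information and certainly does not force the image to act without fixed points on $E[2]\setminus\{O\}$; what CM theory gives is only that the image of $G_K$ ($K$ the CM field) lies \emph{inside} the order-$3$ Cartan, and that image may well be trivial. Concretely, your conclusion $E(\Qz[2][\infty])[2]=\{O\}$ is false for $D=-3$: the curve $y^2=x^3+1$ has CM by $\Z[\zeta_3]$ and a \emph{rational} point of order $2$ (curves with $j=0$ are sextic, not quadratic, twists of one another, so their mod-$2$ image is not controlled the way you claim). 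Hence the seven discriminants you tried to dispose of purely group-theoretically are not all disposed of, and at least $j=0$ must go back into the computational pile.

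Second, the computational half is missing the step that makes it a finite (and correct) computation: a justified bound on where in the tower $P$ can live. You restrict to subfields of degree at most $8$ ``inside $\Q(\zeta_{16})$'', implicitly using $[\Q(P):\Q]\mid\phi(16)=8$, but that needs a rational $16$-isogeny, i.e.\ $\langle P\rangle$ Galois-stable, which only happens when the $2$-primary torsion over $\Qz[2][\infty]$ is cyclic. The paper instead splits according to $E(\Qz[2][\infty])_\tors$ being $\Z/16\Z$, $\Z/2\Z\oplus\Z/16\Z$ or $\Z/4\Z\oplus\Z/16\Z$, and combines the isogeny table with Proposition \ref{pro:naj_enri_growth} and \cite[Proposition 4.6]{gn} to get $\Q(P)\subseteq\Qz[2][6]$ in all cases; only then does a single division-polynomial check over that one field, for the thirteen CM $j$-invariants, finish the proof. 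Your field list (degree $\leq 8$, inside $\Q(\zeta_{16})$) is therefore too small, and not every degree-$8$ subfield of $\Qz[2][\infty]$ lies in $\Q(\zeta_{16})$ anyway. Your twist bookkeeping is also off: a point of $E^{(d)}$ rational over $\Qz[2][\infty]$ requires $d^{3/2}y\in\Qz[2][\infty]$, not $\sqrt{d}\in\Qz[2][\infty]$, so you may not restrict to $d\in\{\pm1,\pm2\}$; and for $j=0,1728$ the curves are not all quadratic twists of one representative. The clean repair, which is exactly the paper's route, is to argue with $x$-coordinates only: roots of the relevant division polynomial generate fields that are invariant under quadratic twisting, so one model per $j$-invariant and one field ($\Qz[2][6]$) suffice, no enumeration of twists and no special treatment of $D=-7$ (whose ``rational $2^n$-isogeny for all $n$'' claim is false over $\Q$ in any case, since $E[\mathfrak{p}^n]$ is only stable under $G_K$, and Kenku's theorem bounds rational cyclic isogeny degrees).
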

\begin{proof}
Assume that $E(\Qz[2][\infty])$ contains a point $P$ of order $16$. We see that $E(\Qz[2][\infty])_\tors$ is isomorphic to one of the groups \[\Z/16\Z, \quad \Z/2\Z \oplus \Z/16\Z, \quad \Z/4\Z \oplus \Z/16\Z.\] In the first scenario, by the Table  \ref{tablica:svi_moguci_n-ovi} we see that $\Q(P) \subseteq \Qz[2][4]$. In the second case ($\Z/2\Z \oplus \Z/16\Z$), we know that $\diam{2P}$ is the kernel of a rational $8$-isogeny and so $\Q(2P) \subseteq \Qz[2][3]$. Using the Proposition \ref{pro:naj_enri_growth} we conclude that $\Q(P) \subseteq \Qz[2][5]$. Finally, consider the third case. The group $\diam{4P}$ is the kernel of a rational $4$-isogeny and so $\Q(4P) \subseteq \Qz[2][2]$. By the Proposition \cite[Proposition 4.6.]{gn}, it follows that $\Q(P) \subseteq \Qz[2][6]$. In any of these cases, the point $P$ is defined over $\Qz[2][6]$. Since $E$ has CM, $j$-invariant of $E$ is equal to one of the following $13$ values:  \[-262537412640768000,\ -147197952000,\ -884736000,\ -12288000,\ -884736,\] \[-32768,\ -3375,\ 0,\ 1728,\ 8000,\ 54000,\ 287496,\ 16581375.\] Computation in \verb|magma| \cite{magma} shows that the $8$\textsuperscript{th} division polynomial of elliptic curves with those $j$-invariants does not have zeroes over $\Qz[2][6]$, which completes the proof.
\end{proof}
\begin{lemma}\label{lem:sluaj_p_jednak_2_2+16}
Let $E/\Q$ be an elliptic curve such that $E(\Qz[2][\infty])[2^\infty] \simeq \Z/2\Z \oplus \Z/16\Z$. Then \[E(\Qz[2][4])[2^\infty] \simeq \Z/2\Z \oplus \Z/16\Z.\]
\end{lemma}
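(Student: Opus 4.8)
The plan is to extract a rational isogeny from the large torsion via Lemma~\ref{lem-j-k_isog}, and then exploit that the base field $\Qz[2][\infty]=\Q(\mu_{2^\infty})$ is cyclotomic, with $\Gal(\Qz[2][\infty]/\Q)\simeq\Z_2^\times$, to pin down the exact layer of definition. Write $E(\Qz[2][\infty])[2^\infty]\simeq\langle P\rangle\oplus\langle Q\rangle$ with $P$ of order $2$ and $Q$ of order $16$. First I would observe that $\langle P,Q\rangle$ contains $E[2]$ but not $E[4]$ (the latter would force $\Z/4\Z\oplus\Z/4\Z\subseteq E(\Qz[2][\infty])$, which is impossible), so Lemma~\ref{lem-j-k_isog} applied with $F=\Qz[2][\infty]$, $p=2$, $k=1$, $j=4$ gives a rational $8$-isogeny. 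Moreover, as $\Qz[2][\infty]/\Q$ is Galois, every $\sigma\in\Gal(\Qbar/\Q)$ restricts to an automorphism of $\Qz[2][\infty]$ and carries $Q$ to an order-$16$ element of $\langle P,Q\rangle$, whence $\sigma(2Q)$ generates $\langle 2Q\rangle$; so $\langle 2Q\rangle$ is the kernel of a rational $8$-isogeny and $\langle 4Q\rangle,\langle 8Q\rangle$ are kernels of rational $4$- and $2$-isogenies. Corollary~\ref{kor:izogenija_stupanj} then yields $[\Q(8Q):\Q]\mid\phi(2)=1$, so $8Q\in E(\Q)$, and $[\Q(2Q):\Q]\mid\phi(8)=4$ with $\Gal(\Q(2Q)/\Q)$ embedding into $(\Z/8\Z)^\times$, a group of exponent $2$; since the maximal subextension of $\Qz[2][\infty]/\Q$ whose Galois group has exponent at most $2$ is $\Qz[2][3]$ (because $(\Z_2^\times)^2=1+8\Z_2$), it follows that $2Q\in E(\Qz[2][3])\subseteq E(\Qz[2][4])$.

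It remains to locate $Q$ and $P$. The group $\langle P,Q\rangle\simeq\Z/2\Z\oplus\Z/16\Z$ has exactly two cyclic subgroups of order $16$, namely $\langle Q\rangle$ and $\langle P+Q\rangle$, and $\langle Q\rangle\cap\langle P+Q\rangle=\langle 2Q\rangle$; since $\Qz[2][\infty]/\Q$ is Galois, $\Gal(\Qbar/\Q)$ permutes this pair. If it fixes $\langle Q\rangle$ (hence also $\langle P+Q\rangle$), then $E$ has rational $16$-isogenies with these kernels, so (as in Corollary~\ref{kor:izogenija_stupanj}) $\Gal(\Q(Q)/\Q)$ and $\Gal(\Q(P+Q)/\Q)$ embed into $(\Z/16\Z)^\times\simeq\Z/2\Z\times\Z/4\Z$, a group of exponent $4$; as the maximal subextension of $\Qz[2][\infty]/\Q$ with Galois group of exponent at most $4$ is $\Qz[2][4]$ (because $(\Z_2^\times)^4=1+16\Z_2$), we get $Q,P+Q\in E(\Qz[2][4])$, hence $P\in E(\Qz[2][4])$, which finishes this case. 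If instead $\Gal(\Qbar/\Q)$ swaps $\langle Q\rangle$ and $\langle P+Q\rangle$, let $K$ be the quadratic field fixed by the index-$2$ stabilizer; then $K$ is one of $\Q(i)$, $\Q(\sqrt2)$, $\Q(\sqrt{-2})$ (the only quadratic subfields of $\Qz[2][\infty]$), and over $K$ the curve $E$ has $K$-rational $16$-isogenies with kernels $\langle Q\rangle$ and $\langle P+Q\rangle$. For $K=\Q(i)$ and $K=\Q(\sqrt{-2})$ the group $\Gal(\Qz[2][\infty]/K)$ is pro-cyclic (isomorphic to $\Z_2$), so $\Gal(K(Q)/K)$ is cyclic and, embedding into $(\Z/16\Z)^\times$, has order at most $4$; thus $K(Q)$, and likewise $K(P+Q)$, lies in the degree-$4$ extension $\Q(\zeta_{16})$ of $K$ inside $\Qz[2][\infty]$, and again $Q,P+Q,P\in E(\Qz[2][4])$.

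The one genuinely difficult case is when $\Gal(\Qbar/\Q)$ swaps $\langle Q\rangle$ and $\langle P+Q\rangle$ over $K=\Q(\sqrt2)$: here $\Gal(\Qz[2][\infty]/\Q(\sqrt2))\simeq\Z/2\Z\times\Z_2$ admits non-cyclic quotients of exponent $4$, and the argument above only yields $Q\in E(\Qz[2][5])$, not $E(\Qz[2][4])$. I expect this to be eliminated by a \verb|magma| computation in the style of Lemma~\ref{lem:sluaj_p_jednak_2_4+8}. In this configuration the rational $8$-isogeny $\pi\colon E\to E':=E/\langle 2Q\rangle$ carries the $\Gal(K/\Q)$-conjugate pair of order-$2$ points $\pi(Q),\pi(P+Q)$, whose sum equals $\pi(P)$ and is therefore $\Q$-rational; hence $E'$ has a rational $2$-torsion point, has full $2$-torsion defined exactly over $\Q(\sqrt2)$, and is $8$-isogenous to $E$ over $\Q$, and by Lemma~\ref{lem:slucaj_p_jednak_2_CM} we may assume in addition that $E$ is non-CM. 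I would encode these constraints, together with Proposition~\ref{pro:naj_enri_growth} and the identity $\Qz[2][5]=\Q[3][2]\,\Q(i)$ (which splits Mordell--Weil ranks over $\Q[3][2]$ via the quadratic twist by $-1$), as a finite computation on the relevant modular curve — a suitable quotient of $X_1(2,16)$ recording the unordered pair $\{\langle Q\rangle,\langle P+Q\rangle\}$ and the extra $2$-torsion point $P$, using the Derickx--Sutherland models — and verify that it produces no elliptic curve over $\Q$ having $Q$ defined over $\Qz[2][5]$ but not over $\Qz[2][4]$. Once this case is excluded, the isogeny-and-cyclotomy argument above gives $E(\Qz[2][4])[2^\infty]\simeq\Z/2\Z\oplus\Z/16\Z$, as claimed.
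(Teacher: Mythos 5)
Your decomposition is genuinely different from the paper's and, in most branches, more transparent. The paper bounds $[\Q(2Q):\Q]\le 4$ via the rational $8$-isogeny, pushes this up to $[\Q(Q):\Q]\le16$ by Proposition~\ref{pro:naj_enri_growth}, and then runs a single \verb|magma| search over all possible $2$-adic images (in the style of Gonz\'alez-Jim\'enez--Najman) to show that whenever $[\Q(P,Q):\Q]\ge16$ the group $\Gal[\Q(P,Q)][\Q]$ is $(\Z/2\Z)^2\oplus\Z/4\Z$, which is not a quotient of $\Gal[\Qz[2][\infty]][\Q]\simeq\Z/2\Z\times\Z_2$. You instead analyse the $\Gal[\Qbar][\Q]$-action on the pair of cyclic order-$16$ subgroups $\{\langle Q\rangle,\langle P+Q\rangle\}$ and obtain a computation-free cyclotomic argument in every branch except one. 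This is a nice refinement. Two small technical remarks on the clean branches: (i) Corollary~\ref{kor:izogenija_stupanj} formally rests on Lemma~\ref{lem:sloj_tocke}, whose hypothesis is that $E(\Q(P))[n]\simeq\Z/n\Z$; this can fail for $2Q$ and $Q$ here since $P$ may already lie in those fields. What you actually need --- that a $\Gal[\Qbar][F]$-stable cyclic group $\langle R\rangle$ of order $n$ yields a homomorphism $\Gal[\Qbar][F]\to(\Z/n\Z)^\times$ with fixed field $F(R)$ --- is true directly from $\sigma(R)=a_\sigma R$; you should say this rather than cite the corollary. (ii) You should note that the quadratic field $K$ cut out by the stabiliser of $\langle Q\rangle$ is automatically contained in $\Qz[2][\infty]$ because the stabiliser contains $\Gal[\Qbar][\Qz[2][\infty]]$; this is the reason $K$ must be one of $\Q(i),\Q(\sqrt2),\Q(\sqrt{-2})$.

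The genuine gap is the swap case over $K=\Q(\sqrt2)$, which you recognise but do not resolve. There you only outline a plan: pass to a suitable quotient of $X_1(2,16)$ recording the unordered pair $\{\langle Q\rangle,\langle P+Q\rangle\}$ and the extra $2$-torsion point, invoke the Derickx--Sutherland models, split Mordell--Weil ranks over $\Qz[2][5]=\Q[3][2]\cdot\Q(i)$, and check that nothing survives. As written this is a programme, not a proof: no specific curve is exhibited, no moduli interpretation is pinned down (the object you describe is not literally $X_1(2,16)$, and it is not obvious the published models cover it), and no outcome is reported. That residual difficulty is precisely what the paper's \verb|magma| enumeration of $2$-adic images settles, so until you either carry out your computation or fall back on that enumeration, the lemma is not established. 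You should also state explicitly (it does follow from your exponent argument in the $\Q(\sqrt2)$ branch, since $(\Z/2\Z\times\Z_2)^4$ corresponds to $1+32\Z_2$) that $\Q(P,Q)\subseteq\Qz[2][5]$ in this case, as that bound is exactly what makes $\Qz[2][5]$ the correct field over which to compute.
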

\begin{proof}
Let $P$ and $Q$ be the points in $E(\Qz[2][\infty])$ of orders $2$ and $16$ respectively, that generate the $\Z/2\Z \oplus \Z/16\Z$ torsion subgroup. We need to show that $\Q(P, Q) \subseteq \Q(\zeta_{2^4})$. We can see that $\langle 2Q \rangle$ is the kernel of the rational $8$-isogeny, which means that $2Q$ is defined over a number field of degree at most $4$. By the Proposition \ref{pro:naj_enri_growth}, we see that $Q$ is defined over a number field of degree at most $16$. Therefore, $\Q(P, Q) \subseteq \Q(\zeta_{2^5})$. Let us show the following claim:
\\
If $[\Q(P, Q):\Q] \geq 16$, then $\Q(P, Q)$ is not contained in $\Qz[2][\infty]$. This will imply the claim of our Lemma. We know that $\Gal[\Q(\zeta_2)][\Q] \simeq \{0\}$, $\Gal[\Q(\zeta_4)][\Q] \simeq \Z/2\Z$ and \[\Gal[\Q(\zeta_{2^k})][\Q] \simeq \Z/2\Z \oplus \Z/2^{k-2}\Z,\] for every positive integer $k>2$. In the case when $[\Q(P, Q):\Q] \geq 16$, we have that $\Gal[\Q(P, Q)][\Q]$ is not of this form and the claim follows. By the previous Lemma \ref{lem:slucaj_p_jednak_2_CM} we know that $E$ does not have CM. In the proof of \cite[Lemma 8.15.]{gn}, E.\ Gonz\'{a}lez-Jim\'{e}nez and F.\ Najman have used \verb|magma| \cite{magma} \href{http://verso.mat.uam.es/~enrique.gonzalez.jimenez/research/tables/growth/lem8_16a.txt}{code} (\url{http://verso.mat.uam.es/~enrique.gonzalez.jimenez/research/tables/growth/growth.html}). We shall do the same. Examining each possible $2$-adic representation, we find all possibilities for $\Gal[\Q(P, Q)][\Q]$, under the assumption that $[\Q(P, Q) : \Q] \geq 16$. Let us mention that the key to this approach is the data available at \href{http://verso.mat.uam.es/~enrique.gonzalez.jimenez/research/tables/pprimary/RZB-2adic/2primary_Ss.txt}{2primary\_Ss.txt} which was computed by E.\ Gonz\'{a}lez-Jim\'{e}nez and \'{A}.\ Lozano-Robledo. All \verb|magma| \cite{magma} codes can be found on \url{http://verso.mat.uam.es/~enrique.gonzalez.jimenez/research/tables/growth/growth.html}. Finally, a calculation in \verb|magma| \cite{magma} shows that the field $\K$ such that $\Q(P, Q) = \K$ and $[\K : \Q] \geq 16$ satisfies $\Gal[\K][\Q] \simeq (\Z/2\Z)^2 \oplus \Z/4\Z$. This means that $\K$ is not contained in $\Qz[2][\infty]$, which completes the proof. 
\end{proof}
\begin{lemma}\label{lem:sluaj_p_jednak_2_4+16}
Let $E/\Q$ be an elliptic curve such that $E(\Qz[2][\infty])[2^\infty] \simeq \Z/4\Z \oplus \Z/16\Z$. Then
\[E(\Qz[2][4])[2^\infty] \simeq \Z/4\Z \oplus \Z/16\Z.\]
\end{lemma}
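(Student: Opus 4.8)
The plan is to follow the pattern of the proof of Lemma~\ref{lem:sluaj_p_jednak_2_2+16}: first localize the relevant torsion points in small cyclotomic fields, then reduce the statement to a bound on the exponent of a single Galois group, and finally close the argument by a finite case analysis over the classification of $2$-adic images. Fix a point $P$ of order $4$ and a point $Q$ of order $16$ generating the subgroup $\Z/4\Z\oplus\Z/16\Z\subseteq E(\Qz[2][\infty])$; since $E(\Qz[2][4])[2^\infty]$ is a subgroup of $E(\Qz[2][\infty])[2^\infty]$, it is enough to prove $\Q(P,Q)\subseteq\Q(\zeta_{2^4})=\Qz[2][4]$. By Lemma~\ref{lem:slucaj_p_jednak_2_CM} the curve $E$ has no CM, because it has a point of order $16$ over $\Qz[2][\infty]$. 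Now $E[4]=E(\Qz[2][\infty])[4]$ is defined over the abelian extension $\Qz[2][\infty]/\Q$, so Theorem~\ref{teo:gonloz_abel} (with $n=4$), together with the fact that $\Gal[\Qz[2^\infty]][\Q]\simeq\Z/2\Z\times\Z_2$ has no quotient isomorphic to $(\Z/2\Z)^3$ or $(\Z/2\Z)^4$, forces $\Q(E[4])\subseteq\Q(\zeta_{2^3})$; in particular $\Q(P)\subseteq\Q(\zeta_{2^3})$ and $\Gal[\Q(E[4])][\Q]$ has exponent dividing $2$. Moreover, every $\sigma\in\Gal[\Qbar][\Q]$ sends $Q$ to a point of order $16$ lying in $E(\Qz[2][\infty])[2^\infty]=\langle P\rangle\oplus\langle Q\rangle$, hence of the form $bP+dQ$ with $d\in(\Z/16\Z)^\times$, so $\sigma(4Q)=4dQ\in\langle 4Q\rangle$; thus $\langle 4Q\rangle$ is the kernel of a rational cyclic $4$-isogeny and Corollary~\ref{kor:izogenija_stupanj} gives $[\Q(4Q):\Q]\mid\phi(4)=2$.

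Next comes the reduction. For $\sigma\in\Gal[\Qbar][\Q(E[4])]$ we have $4(\sigma(Q)-Q)=\sigma(4Q)-4Q=O$, so $\sigma\mapsto\sigma(Q)-Q$ is an injective homomorphism $\Gal[\Q(E[4],Q)][\Q(E[4])]\hookrightarrow E[4]\simeq(\Z/4\Z)^2$, whence $\Gal[\Q(E[4],Q)][\Q(E[4])]$ has exponent dividing $4$. Since $\Gal[\Q(E[4])][\Q]$ has exponent dividing $2$, the group $\Gal[\Q(E[4],Q)][\Q]$ has exponent dividing $8$, and hence so does its quotient $\Gal[\Q(P,Q)][\Q]$. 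On the other hand, a subfield of $\Qz[2][\infty]$ is contained in $\Q(\zeta_{2^4})$ if and only if its Galois group over $\Q$ has exponent dividing $4$ (because $\Gal[\Qz[2^\infty]][\Qz[2^4]]$ coincides with $4\cdot\Gal[\Qz[2^\infty]][\Q]$ under $\Gal[\Qz[2^\infty]][\Q]\simeq\Z/2\Z\times\Z_2$). Therefore, to finish the proof it suffices to exclude an element of order $8$ in $\Gal[\Q(P,Q)][\Q]$; equivalently, to show that $\Q(P,Q)$ contains no cyclic subextension of degree $8$ over $\Q$.

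This last step is computational, exactly as in the proof of Lemma~\ref{lem:sluaj_p_jednak_2_2+16}. Using the tables of $2$-adic Galois images and the \texttt{magma} \cite{magma} code employed there (ultimately the data of \cite{gn}), one enumerates all (necessarily non-CM) $2$-adic images $\rho_{E,2^\infty}(G_\Q)$ compatible with $E(\Qz[2][\infty])[2^\infty]\simeq\Z/4\Z\oplus\Z/16\Z$, computes $\Gal[\Q(P,Q)][\Q]$ for each, and checks that its exponent always divides $4$, so that $\Q(P,Q)\subseteq\Q(\zeta_{2^4})$. The main obstacle is precisely this finite but delicate enumeration: making the compatibility condition on the image effective, running over the $2$-adic image lattice, and verifying in every case that adjoining $P$ and $Q$ produces no cyclic degree-$8$ (or larger) piece that would escape $\Q(\zeta_{2^4})$; this is where the computer is indispensable, just as in the preceding lemmas. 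As a consistency check, Propositions~\ref{pro:naj_enri_growth} and~\ref{pro:gonzalo_najman} applied along $\Q(4Q)\subseteq\Q(2Q)\subseteq\Q(Q)$ already bound $[\Q(P,Q):\Q]$ by an explicit power of $2$, which keeps the enumeration finite in practice.
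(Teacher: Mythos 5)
Your proof is correct, and it matches the paper at the final step — the decisive verification is a \texttt{magma} enumeration over the non-CM $2$-adic Galois images, exactly as in the preceding lemma — but the reduction you perform beforehand is genuinely different and, in my view, more informative. The paper proceeds via a raw degree bound: applying Proposition~\ref{pro:naj_enri_growth} along $\Q(4Q)\subseteq\Q(2Q)\subseteq\Q(Q)$ gives $[\Q(P,Q):\Q]\leq 32$, hence $\Q(P,Q)\subseteq\Q(\zeta_{2^6})$, and the computation then shows that whenever $[\Q(P,Q):\Q]\geq 16$ the Galois group needs at least three generators and so cannot be a quotient of $\Gal[\Qz[2^\infty]][\Q]\simeq\Z/2\Z\times\Z_2$. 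You instead bound the \emph{exponent}: from Theorem~\ref{teo:gonloz_abel} and the two-generator constraint you get $\Q(E[4])\subseteq\Q(\zeta_8)$ with exponent $\leq 2$, the coboundary map $\sigma\mapsto\sigma Q-Q$ embeds $\Gal[\Q(E[4],Q)][\Q(E[4])]$ into $E[4]$ giving exponent $\leq 4$, hence $\Gal[\Q(P,Q)][\Q]$ has exponent $\leq 8$, and containment in $\Q(\zeta_{16})$ is equivalent (for subfields of $\Qz[2^\infty]$) to exponent $\leq 4$. This pins down precisely what the computation must rule out, namely a cyclic degree-$8$ piece, whereas the paper's degree criterion leaves the case $[\Q(P,Q):\Q]=8$ with $\Gal\simeq\Z/8\Z$ to be settled implicitly by the same enumeration; your version makes the residual task transparent and slightly smaller. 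Both routes buy the same conclusion, but your exponent reduction is the cleaner way to explain why the computation is finite and what it is actually checking.
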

\begin{proof}
The proof is similar to the proof of the previous Lemma. If $P$ and $Q$ are the points in $E(\Qz[2][\infty])$ of orders $4$ and $16$ respectively that generate torsion subgroup $\Z/4\Z \oplus \Z/16\Z$, then $\langle 4Q \rangle$ is the kernel of the rational $4$-isogeny and $\Q(4Q)$ is at most quadratic extension of $\Q$. We conclude that the degree of $\Q(P, Q)$ is at most $32$. Therefore,  $\Q(P, Q) \subseteq \Qz[2][6]$. A calculation in \verb|magma| \cite{magma} shows that if $\K = \Q(P, Q)$ and $[\K : \Q] \geq 16$, then $\Gal[\K][\Q]$  is isomorphic to one of the groups \[(\Z/2\Z)^4, \quad (\Z/2\Z)^2 \oplus \Z/4\Z, \quad (\Z/2\Z)^5, \quad (\Z/2\Z)^3 \oplus \Z/4\Z.\] We conclude that $\K$ is not contained in $\Qz[2][\infty]$, which completes the proof.
\end{proof}
Finally, the following Lemma concludes the proof of Theorem \ref{teo:rast_qzetap}.
\begin{lemma}\label{lem:sluaj_p_jednak_2}
Let $E/\Q$ be an elliptic curve. Then
\[E(\Qz[2][\infty])_\tors = E(\Qz[2][4])_\tors.\]
\end{lemma}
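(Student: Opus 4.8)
The plan is to argue one prime at a time. Since $\Qz[2][\infty]\subseteq\Q^{\mathrm{ab}}$, Theorem \ref{teo:chou_max_abel} already guarantees that $E(\Qz[2][\infty])_\tors$ is finite and of rank at most two. Two properties of the ground field will be used repeatedly: every finite subextension of $\Qz[2][\infty]/\Q$ has $2$-power degree (as $\Gal[{\Qz[2][\infty]}][\Q]\simeq\Z_2^\times$), and $\Qz[2][4]$ is precisely the maximal subextension of $\Qz[2][\infty]$ whose Galois group has exponent dividing $4$, because the fourth powers of $\Z_2^\times$ form the subgroup $1+16\Z_2=\Gal[{\Qz[2][\infty]}][\Qz[2][4]]$. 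In particular every subfield of $\Qz[2][\infty]$ of degree at most $4$ over $\Q$, as well as the subfield with group $\Z/2\Z\oplus\Z/4\Z$, is contained in $\Qz[2][4]$.

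For an odd prime $q$, Proposition \ref{pro:weil} together with the absence of $\zeta_q$ in $\Qz[2][\infty]$ forces $E(\Qz[2][\infty])[q^\infty]$ to be cyclic; hence, whenever $P\in E(\Qz[2][\infty])$ has order $q^n$, Lemma \ref{lem-j-k_isog} gives a rational $q^n$-isogeny with kernel $\diam P$, so $[\Q(P):\Q]\mid\phi(q^n)$ by Corollary \ref{kor:izogenija_stupanj}. By Theorem \ref{teo:mazur_izogenije} the only odd prime powers that can occur are among $3,5,7,9,11,13,25,27,37,43,67,163$ and $17$, and since $[\Q(P):\Q]$ is a power of $2$, in every case except $q=17$ it divides the $2$-part of $\phi(q^n)$, which is at most $4$; thus $\Q(P)\subseteq\Qz[2][4]$ by the first paragraph, and $E(\Qz[2][\infty])[q^\infty]=E(\Qz[2][4])[q^\infty]$ for these $q$. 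The prime $q=17$ is special because $\phi(17)=2^4$: here there are only finitely many $E/\Q$ admitting a rational $17$-isogeny (the noncuspidal rational points of $X_0(17)$, up to twist), and a point of order $17$ over an abelian extension has degree $8$ or $16$ over $\Q$ (Theorem \ref{teo:rast_torzije} combined with $[\Q(P):\Q]\mid\phi(17)$ and the cyclicity of $\Gal[\Q(P)][\Q]\hookrightarrow(\Z/17\Z)^\times$). One then checks, exactly as in Lemma \ref{lem:lagani_prosti}, that for each of these curves and each twist for which $\Q(P)$ could lie in $\Qz[2][\infty]$ the relevant degree-$8$ or degree-$16$ factor of the $17$-division polynomial has no root generating the corresponding cyclic octic or hexadecic subfield of $\Qz[2][\infty]$; so $E(\Qz[2][\infty])$ has no point of order $17$, and neither does $E(\Qz[2][4])$.

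It remains to prove $E(\Qz[2][\infty])[2^\infty]=E(\Qz[2][4])[2^\infty]$. Combining Theorem \ref{teo:chou_max_abel} with Theorem \ref{teo:gonloz_abel} — the latter to exclude $\Q(E[4])$ or $\Q(E[8])$ being abelian over $\Q$ unless its Galois group has $2$-rank at most $2$, which rules out $\Z/8\Z\oplus\Z/8\Z$ and forces $\Q(E[4])\subseteq\Qz[2][3]$ whenever $E[4]\subseteq E(\Qz[2][\infty])$ — the only possibilities for $E(\Qz[2][\infty])[2^\infty]$ are $\Z/2^k\Z$ with $k\le4$, $\Z/2\Z\oplus\Z/2^k\Z$ with $k\le4$, and $\Z/4\Z\oplus\Z/2^k\Z$ with $2\le k\le4$. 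If $E(\Qz[2][\infty])[2]$ is cyclic, then the whole $2$-part $\Z/2^k\Z$ is cyclic and a generator $P$ satisfies $E(\Q(P))[2^k]\simeq\Z/2^k\Z$, so Lemma \ref{lem:sloj_tocke} puts $\Gal[\Q(P)][\Q]$ inside $(\Z/2^k\Z)^\times$, a group of exponent dividing $4$, whence $\Q(P)\subseteq\Qz[2][4]$. If $E[2]\subseteq E(\Qz[2][\infty])$ its $2$-torsion lies in $\Qz[2][3]$ as above, so the cases $\Z/2\Z\oplus\Z/2\Z$ and $\Z/4\Z\oplus\Z/4\Z$ are done by Theorem \ref{teo:gonloz_abel}; for $\Z/2\Z\oplus\Z/4\Z$ the Galois orbit of an order-$4$ point lies among the four elements of order $4$ in the group, so its degree over $\Q$ is at most $4$ and it lies in $\Qz[2][4]$. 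The remaining configurations are $\Z/2\Z\oplus\Z/8\Z$, $\Z/2\Z\oplus\Z/16\Z$, $\Z/4\Z\oplus\Z/8\Z$ and $\Z/4\Z\oplus\Z/16\Z$; in the last three Lemma \ref{lem:slucaj_p_jednak_2_CM} shows $E$ has no complex multiplication. In each of these one first bounds the degree of the point $Q$ of maximal order by Proposition \ref{pro:naj_enri_growth}, reducing to the exclusion of the possibility that $\Q(Q)$ is a cyclic $2$-power subfield of $\Qz[2][\infty]$ strictly larger than $\Qz[2][4]$; for $\Z/2\Z\oplus\Z/16\Z$, $\Z/4\Z\oplus\Z/8\Z$ and $\Z/4\Z\oplus\Z/16\Z$ this is exactly the content of Lemmas \ref{lem:sluaj_p_jednak_2_2+16}, \ref{lem:sluaj_p_jednak_2_4+8} and \ref{lem:sluaj_p_jednak_2_4+16}, and $\Z/2\Z\oplus\Z/8\Z$ is settled by the same $2$-adic image analysis (a cyclic octic field of definition forces, via Lemma \ref{lem:sloj_tocke}, a contradiction unless $\Q(E[2])=\Q(\sqrt2)$, and that last configuration is eliminated by inspecting the possible $2$-adic representations, as in Lemma \ref{lem:sluaj_p_jednak_2_2+16}).

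The hard part is the $2$-primary analysis. The odd primes are routine, the only genuine exception being $q=17$, which is delicate solely because $\phi(17)$ is a large power of $2$; the crux of everything else is ensuring that for each admissible $2$-primary structure the relevant generators are really defined over $\Qz[2][4]$ and not over one of the larger cyclic $2$-power subfields of $\Qz[2][\infty]$ (which do exist). That is precisely what the four preceding technical lemmas are built to handle, and assembling them with the elementary degree bounds above finishes the proof.
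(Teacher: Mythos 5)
Your proof follows the paper's overall skeleton faithfully: odd primes via the $n$-isogeny degree bound $[\Q(P):\Q]\mid\phi(n)$, a special division-polynomial check for $n=17$, and a case-by-case analysis of the possible $2$-primary structures using Theorem \ref{teo:gonloz_abel} and the three preceding technical lemmas. A few of your choices are actually cleaner than the paper's: invoking Lemma \ref{lem:sloj_tocke} directly for the cyclic $\Z/2^k\Z$ case and using the orbit-size bound for $\Z/2\Z\oplus\Z/4\Z$ make the ``exponent dividing $4$'' conclusion transparent rather than resting on a raw degree bound, and for $n=17$ you correctly observe that one must rule out \emph{cyclic} octic and hexadecic subfields of $\Qz[2][\infty]$, not merely subfields of $\Qz[2][5]$ (the paper's phrasing ``$\Q(P)\subseteq\Qz[2][5]$'' quietly ignores the cyclic degree-$16$ subfields outside $\Q(\zeta_{32})$).

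There are, however, two concrete defects. First, you claim Lemma \ref{lem:slucaj_p_jednak_2_CM} rules out CM for the $\Z/4\Z\oplus\Z/8\Z$ case; but that lemma requires a point of order $16$ over $\Qz[2][\infty]$, which a group isomorphic to $\Z/4\Z\oplus\Z/8\Z$ does not contain, so the lemma is vacuous there. This is harmless in the end because the paper's Lemma \ref{lem:sluaj_p_jednak_2_4+8} (which you cite) does not rely on non-CM, but the attribution is wrong. Second, and more seriously, your treatment of $\Z/2\Z\oplus\Z/8\Z$ is a gap: after reducing to the configuration $\Q(E[2])=\Q(\sqrt2)$ with $\Q(Q)$ a cyclic octic field, you wave at ``inspecting the possible $2$-adic representations, as in Lemma \ref{lem:sluaj_p_jednak_2_2+16}'' without supplying the argument or the computation. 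The paper instead disposes of this case together with $\Z/2\Z\oplus\Z/4\Z$ in two lines: $\langle 2Q\rangle$ is a rational cyclic kernel so $[\Q(2Q):\Q]\le 2$, and Proposition \ref{pro:naj_enri_growth} gives $[\Q(Q):\Q(2Q)]\mid 4$, placing $Q$ over $\Qz[2][4]$. (To be fair, the paper's ``$[\Q(Q):\Q]\mid 8$ hence $\Q(Q)\subseteq\Qz[2][4]$'' step is itself slightly terse, since $\Qz[2][\infty]$ has cyclic octic subfields outside $\Q(\zeta_{16})$; the clean way to close it is that $\operatorname{Aut}(\Z/2\Z\oplus\Z/8\Z)$ has exponent $4$, so $\Gal[\Q(P,Q)][\Q]$ does too.) You should replace your sketch for $\Z/2\Z\oplus\Z/8\Z$ with one of these direct arguments rather than appealing to an unspecified $2$-adic image computation.
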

\begin{proof}
Let $q > 2$ be a prime number and $n$ a positive integer such that $E(\Qz[2][\infty])$ contains a point $P$ of order $q^n$. As before, we conclude that $\Q(P) \subseteq \Qz[2][4]$, except when $q=17$ and $n=1$. Let $P \in E(\Qz[2][\infty])$ be a point of order $17$. We know that $E(\Qz[2][\infty])_\tors \simeq \Z/17\Z$, which means that $E$ has a rational $17$-isogeny. Furthermore, we have $\Q(P) \subseteq \Qz[2][5]$. By \cite{loz} and \cite{lmfdb}, we see that
\begin{align*}
j(E) = \frac{-17^2 \cdot 101^3}{2}, &\quad \text{which is true for elliptic curve \href{https://www.lmfdb.org/EllipticCurve/Q/14450p1/}{14450p1}} \\
	&\text{or} \\
j(E) = \frac{-17 \cdot 373^3}{2^{17}}, &\quad \text{which is true for elliptic curve \href{https://www.lmfdb.org/EllipticCurve/Q/14450p2/}{14450p2}.}
\end{align*}
For each of those two $j$-invariants we compute and factor the $17$\textsuperscript{th} division polynomials associated to the two elliptic curves previously mentioned. More precisely, we're searching for irreducible factors of degree $\leq 16$ and we check that no such factors have zeroes defined over $\Qz[2][5]$. Therefore, $E(\Qz[2][\infty])$ can't contain a point of order $17$.

It remains to consider the points of order $2^n$. We see that $E(\Qz[2][\infty])[2^\infty]$ is isomorphic to one of the following groups:
\[\Z/2\Z, \quad \Z/4\Z, \quad \Z/8\Z, \quad \Z/16\Z,\]
\[\Z/2\Z \oplus \Z/2\Z, \quad \Z/2\Z \oplus \Z/4\Z, \quad \Z/2\Z \oplus \Z/8\Z, \quad \Z/2\Z \oplus \Z/16\Z,\]
\[\Z/4\Z \oplus \Z/4\Z, \quad \Z/4\Z \oplus \Z/8\Z, \quad \Z/4\Z \oplus \Z/16\Z,\]
\[\Z/8\Z \oplus \Z/8\Z.\]
If $E(\Qz[2][\infty])[2^\infty] \simeq \Z/2^n\Z$, for some $n \in \{1, 2, 3, 4\}$, then $E$ has a rational $2^n$-isogeny so the claim immediately follows from the Table \ref{tablica:svi_moguci_n-ovi}.

We know that $\Gal[\Q(\zeta_2)][\Q] \simeq \{0\}$, $\Gal[\Q(\zeta_4)][\Q] \simeq \Z/2\Z$ and 
\[\Gal[\Q(\zeta_{2^k})][\Q] \simeq \Z/2\Z \oplus \Z/2^{k-2}\Z,\] for every positive integer $k > 2$. Using the Theorem \ref{teo:gonloz_abel} we conclude that the torsion $\Z/8\Z \oplus \Z/8\Z$ can not occur. Furthermore, the same Theorem implies that our claim holds in the case when the torsion subgroup is isomorphic to $\Z/2\Z \oplus \Z/2\Z$ or $\Z/4\Z \oplus \Z/4\Z$.

Lemma \ref{lem:sluaj_p_jednak_2_4+8} implies that if $E(\Qz[2][\infty])[2^\infty] \simeq \Z/4\Z \oplus \Z/8\Z$, then that torsion subgroup is actually defined over $\Qz[2][4]$. When the torsion subgroup equals to $\Z/2\Z \oplus \Z/16\Z$ or $\Z/4\Z \oplus \Z/16\Z$, the claim follows by the Lemma \ref{lem:sluaj_p_jednak_2_2+16} or the Lemma \ref{lem:sluaj_p_jednak_2_4+16}, respectively.

It remains to check the case when the torsion subgroup is $\Z/2\Z \oplus \Z/4\Z$ or $\Z/2\Z \oplus \Z/8\Z$. Assume that the points $P$ and $Q$ generate such torsion subgroup. The group $\diam{2Q}$ is the kernel of a rational $k$-isogeny and we know that $2Q$ is defined over a number field of degree at most $2$. This means that the point $Q$ is defined over a number field of degree at most $2 \cdot 4 = 8$, so it's defined over $\Qz[2][4]$. This completes the proof of this Lemma and the proof of the Theorem \ref{teo:rast_qzetap}.
\end{proof}

\vfill
\begin{remark}
\verb|Magma| \cite{magma} code used in this paper can be found on Ivan Krijan's \href{https://web.math.pmf.unizg.hr/~ikrijan/}{webpage}.
\end{remark}
\begin{acknowledgments}
We would like to thank \href{https://hdaniels.people.amherst.edu/}{Harris B.\ Daniels}, \href{http://www.maartenderickx.nl/}{Maarten Derickx} and \href{https://web.math.pmf.unizg.hr/~fnajman/}{Filip Najman} for great direct and indirect help and very useful discussions.
\end{acknowledgments}

\clearpage
%\nocite{*}
\bibliographystyle{plain}
\bibliography{Zp_bib}
\end{document}